\newtheorem{theorem}{Theorem}[section]
\newtheorem{thm}[theorem]{Theorem}
\newtheorem{claim}[theorem]{Claim}
\newtheorem{fact}[theorem]{Fact}
\newtheorem{cor}[theorem]{Corollary}
\newtheorem{lemma}[theorem]{Lemma}
\newtheorem{question}[theorem]{Question}
\theoremstyle{definition}
\newtheorem{defn}[theorem]{Definition}
\newtheorem{example}[theorem]{Example}
\theoremstyle{remark}
\newtheorem{remark}[theorem]{Remark}
\newcommand{\WM}{\widetilde{\cal M}}
\newcommand{\la}{\langle}
\newcommand{\ra}{\rangle}
\newcommand{\CH}{{\cal H}}
\newcommand{\CU}{{\cal U}}
\newcommand{\sub}{\subseteq}
\newcommand{\dcl}{\operatorname{dcl}}
\newcommand{\scl}{\operatorname{scl}}
\newcommand{\dscl}{\operatorname{dscl}}
\newcommand{\ldim}{\dim}
\newcommand{\bb}[1]{\ensuremath{\mathbb{#1}}}
\newcommand{\cal}[1]{\ensuremath{\mathcal{#1}}}
\newcommand{\Lrarr}{\ensuremath{\Leftrightarrow}}
\newcommand{\res}{\ensuremath{\upharpoonright}}
\newcommand{\es}{\ensuremath{\emptyset}}
\newcommand{\sm}{\setminus}
\newcommand{\Z}{\mathbb{Z}}
\newcommand{\N}{\mathbb{N}}
\newcommand{\Q}{\mathbb{Q}}
\newcommand{\R}{\mathbb{R}}
\title[Characterizing o-minimal groups]
{Characterizing o-minimal groups in tame expansions of o-minimal structures}
\subjclass[2010]{Primary 03C64,  03C68, 22B99}
\keywords{o-minimal structure, tame expansion, dimension function, definable group, independent set, group chunk}
\date{\today}
\begin{document}

\author {Pantelis  E. Eleftheriou}

\address{Department of Mathematics and Statistics, University of Konstanz, Box 216, 78457 Konstanz, Germany}

\email{panteleimon.eleftheriou@uni-konstanz.de}

\thanks{Research supported by an Independent Research Grant from the German Research Foundation (DFG) and a Zukunftskolleg Research Fellowship.}

\begin{abstract} 


Let  $\WM=\la \cal M, P\ra$ be an expansion of an o-minimal structure $\cal M$ by a dense set $P$, such that three tameness conditions hold. Among all definable groups in $\WM$, we characterize those that are definable in $\cal M$ as the ones whose dimension equals the dimension of their topological closure.
As an application, we obtain that if $P$ is independent, 
then every definable group in $\WM$ is already definable in $\cal M$.






\end{abstract}

\begin{abstract}   We study groups definable in tame expansions of o-minimal structures and give a dimension-theoretic characterization of those groups that are o-minimal. Let  $\cal N$ be an expansion of an o-minimal structure $\cal M$,  such that every open definable set is definable in \cal M, and such that \cal N admits a dimension function compatible with \cal M.  We prove that a definable group is definable in \cal M if and only it has maximal dimension; namely, its dimension equals the dimension of its topological closure. Our setting includes all known tame expansions $\cal N = \la \cal M, P\ra$, where $P\sub M$ is a dense set, such as dense pairs, Mann pairs and expansions by an independent set. As an application,  in the last case we obtain that every definable group is already definable in \cal M.

\end{abstract}

\begin{abstract}
In this paper we prove the first global results for groups definable in tame expansions of o-minimal structures. Let $\cal N$ be an expansion of an o-minimal structure \cal M that admits a  dimension function with the usual properties. The setting includes all known tame expansions $\cal N=\la\cal M, P\ra$, where $P$ is a dense set, such as dense pairs, Mann pairs, and expansions by an independent set. Our results include: (1) a Weil's group chunk theorem that guarantees that a definable group with an o-minimal group chunk is o-minimal, (2) a full characterization of those definable groups that are o-minimal as those definable groups that have maximal dimension; namely their dimension equals the dimension of their topological closure, (3) an application, that in the expansion $\cal N=\la \cal M, P\ra$, where $P$ is independent, every definable group is o-minimal.
\end{abstract}

\begin{abstract}
We establish the first global results for groups definable in tame expansions of o-minimal structures. Let $\cal N$ be an expansion of an o-minimal structure \cal M that admits a good  dimension theory. The setting includes dense pairs of o-minimal structures, expansions of \cal M by a Mann group, or by a subgroup of an elliptic curve, or a dense independent set. We prove: (1) a Weil's group chunk theorem that guarantees a definable group with an o-minimal group chunk is o-minimal, (2) a full characterization of those definable groups that are o-minimal as those groups that have maximal dimension; namely their dimension equals the dimension of their topological closure, (3) if $\cal N$ expands \cal M by a dense independent set, then  every definable group is o-minimal.


\end{abstract}

\maketitle

\section{Introduction}

Definable groups have been at the core of model theory for at least a period of three decades, largely because of their prominent role in important applications of the subject, such as Hrushovski's proof of the function field Mordell-Lang conjecture in all characteristics (\cite{hr}). Examples include  algebraic groups (which are definable in algebraically closed fields) and  real Lie groups (which are definable in o-minimal structures). Groups definable in o-minimal structures are well-understood, with arguably the most influential work in the area being the solution of Pillay's conjecture over a field (\cite{hpp}), which brought to light new tools in  theories with NIP.
On the other hand, groups definable in tame expansions of o-minimal structures have only  been  studied locally (\cite{egh}). In this paper we prove the first global results, 
whose gist is that one can recover a group definable in the o-minimal reduct from an arbitrary definable group using only dimension-theoretic data.


O-minimal structures were introduced and first studied by van den Dries \cite{vdd-tarski} and Knight-Pillay-Steinhorn \cite{kps, ps} and have since provided a rigid framework to study real algebraic and analytic geometry. They have enjoyed a wide spectrum of applications reaching out even to number theory and Diophantine geometry (such as in Pila's solution of certain cases of the Andr\'e-Oort  Conjecture \cite{pila}).
Tame expansions of o-minimal structures have been developed as a context that escapes  the o-minimal, locally finite setting, yet preserves the tame geometric behavior on the class of all definable sets.
 An important category of such structures are those where every open definable set is already definable in the o-minimal reduct.
The primary example is that of the real field expanded by the subfield of real algebraic numbers, studied by A. Robinson in his classical paper \cite{rob}, where the decidability of its theory was proven.
Forty years later, van den Dries  \cite{vdd-dense} extended Robinson's results to arbitrary dense pairs of o-minimal structures, and a stream of further developments in the subject followed (\cite{bz, beg, bh,dms1,dms2,dg, gh,ms}). Besides dense pairs, examples of structures in this category  now include pairs of the form $\la \cal M, P\ra$, where $\cal M$ is an o-minimal expansion of an ordered group, and $P$ is a dense multiplicative subgroup with the Mann property, or a dense subgroup of the unit circle or of an elliptic curve, or it is  a dense independent set. Moreover, a cone decomposition theorem and the associated dimension function have been developed in a  general setting that includes the above pairs  (\cite{egh}), extending the known cell decomposition theorem from o-minimal structures and the usual o-minimal dimension.
The setting of the current paper is even more general, as only the existence of a good dimension theory is assumed. Moreover, the o-minimal reduct does not need to expand an ordered group.  Our main theorem (Theorem \ref{main} below) strikingly reflects the underlying idea of this category
(that open definable sets are definable in the o-minimal reduct) at the level of definable groups.  Let us introduce some terminology and explain its concept.



 Throughout this paper, $\cal M$    and $\cal N$ denote two first-order structures, with $\cal N$ expanding $\cal M$. We denote by $\cal L$ the language of \cal M  and by $\dcl$ its usual definable closure.  By `$\cal L$-definable' or  `definable in \cal M' we mean definable in \cal M with parameters. By `definable'  or `definable in \cal N' we mean definable in \cal N with parameters. With the exception of Section \ref{sec-chunk}, $\cal M$ is o-minimal, and every open definable set is $\cal L$-definable.

 A \emph{dimension function compatible with \cal M} is a map $\dim$ from the class of all definable sets  to $\{-\infty\}\cup \N$ that satisfies the following properties: for all definable $X,Y\sub M^n$,  and $a\in M$,
\begin{enumerate}
\item[\textbf{(D1)}]  $\dim\{a\}=1$, $\dim M=1$, and $\dim X=-\infty$ if and only if $X=\es$
\item[\textbf{(D2)}]  $\dim (X\cup Y)=\max \{\dim X, \dim Y\}$
\item[\textbf{(D3)}]  if $\{X_t\}_{t\in I}$ is a disjoint definable family of sets, then
\begin{enumerate}
\item for $d=\{-\infty\}\cup \N$, the set $I_d=\{t\in I: \dim X_t=d\}$ is definable, and
\item  if every $X_t$ has dimension $k$, then
$$\dim \bigcup_{t\in I} X_t=\dim I + k$$
\end{enumerate}
\item[\textbf{(D4)}]  if $f:X\to Y$ is a definable bijection, then $\dim X=\dim Y$
  \item[\textbf{(D5)}]  the dimension of every  \cal L-definable set $X$ coincides with its usual o-minimal dimension
  \item[\textbf{(D6)}]   every definable map $f:M^n\to M$ agrees with an $\cal L$-definable map $F:M^n\to M$ outside a definable set of  dimension $<n$. 
\end{enumerate}
It follows from \textbf{(D2)} that $\dim$ is monotone, and from \textbf{(D1)-(D4)} that it is   a dimension function also in the sense of  \cite{vdd-dim}.


In the rest of this paper, and unless stated otherwise, we assume that \cal N admits a dimension function $\dim$ compatible with \cal M. In the aforementioned pairs $\la \cal M, P\ra$ the existence of such $\dim$ was established in \cite{egh} (details are postponed until Section \ref{sec-egh}). Moreover, in that context, a definable set $X$ was shown to have dimension $0$ if and only if $X$ is internal to $P$ if and only if no open interval is internal to $X$ (where internality is the usual notion from geometric stability theory). Such a set $X$ was called  `small'. In particular, $P$ is small. Likewise here, let us call a definable set \emph{small} if it has dimension $0$, and \emph{large}, otherwise. Observe that by monotonicity of $\dim$, the dimension of a definable set $X$ is bounded by the (o-minimal) dimension of its topological closure $cl(X)$. We call a definable set $X$ \emph{strongly large} if
$$\dim X=\dim cl(X).$$
We call a definable group strongly large if its domain is  strongly large. Every infinite small set is not strongly large. Every $\cal L$-definable set is strongly large. 
The main intuition is that, conversely, strongly large sets must behave like $\cal L$-definable sets. Our main theorem establishes this intuition at the level of definable groups.

\begin{thm}\label{main} 
Let \cal N be an expansion of an o-minimal structure \cal M such that
\begin{itemize}
  \item[(a)] every open definable set is definable in \cal M, and
\item[(b)] \cal N admits a dimension function compatible with \cal M.
\end{itemize}
Let $G$ be a definable group. Then $G$ is definably isomorphic to a group definable in \cal M if and only if it is definably isomorphic to a strongly large group. 
\end{thm}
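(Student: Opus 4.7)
The easy direction is nearly immediate. If $G$ is definably isomorphic to an $\cal L$-definable group $H$, then by (D5) $\dim H$ equals the usual o-minimal dimension of $H$, and $\cl(H)$ is itself $\cal L$-definable, since hypothesis (a) forces the interior of any definable set, and hence its closure, to be $\cal L$-definable. In the o-minimal $\cal M$ a definable set and its closure always have equal dimension, so $\dim H = \dim \cl(H)$ and $H$ is strongly large.

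For the converse, suppose $G \sub M^k$ is a strongly large definable group of dimension $n$. My plan is to extract an $\cal L$-definable group chunk (in the sense of Weil) from the multiplication on $G$, and then appeal to the group chunk theorem of the paper (announced in the abstract and proved in a later section), which produces an $\cal L$-definable group definably isomorphic to $G$. As a first step I note that $\cl(G)$ is $\cal L$-definable of o-minimal dimension $n$, by the interior argument above. Cell decomposition in $\cal M$ then yields an $\cal L$-definable dense open subset $U$ of $\cl(G)$ which is, via a coordinate projection, $\cal L$-definably homeomorphic to an $\cal L$-definable open set $V \sub M^n$. Since $\cl(G) \sm U$ has $\cal M$-dimension strictly less than $n$, so does $G \sm U$ in $\cal N$-dimension, and transferring $G \cap U$ along this chart yields a definable full-dimensional subset $G_0 \sub V$ carrying the group operation inherited from $G$. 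This reduction of the ambient dimension from $k$ to $n$ is essential, since it is what allows (D6) to bite.

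Next, I would apply (D6) componentwise to the group multiplication on $G_0$, extended arbitrarily to a map $M^{2n} \to M^n$, obtaining an $\cal L$-definable map $\tilde\mu$ that coincides with the product on a subset of $G_0 \times G_0$ of full dimension $2n$. The analogous construction for inversion produces an $\cal L$-definable $\tilde\iota : M^n \to M^n$. After passing to a further $\cal L$-definable dense open $V^* \sub V$, one checks that associativity, the inverse identity, and continuity of $\tilde\mu, \tilde\iota$ hold on subsets of full dimension in the relevant powers of $V^*$, because these identities hold in $G$ and the $\cal L$-definable approximations agree with the true operations on a large set. The data $(V^*, \tilde\mu, \tilde\iota)$ then assemble into an $\cal L$-definable group chunk matching the group structure of $G$ on a generic subset.

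The main technical obstacle is the bookkeeping of exceptional small sets: one must coordinate the shrinkings of $U$, $V$, and the domains of $\tilde\mu, \tilde\iota$ so that on some $\cal L$-definable full-dimensional subset the chunk operations literally equal the inherited group operations, and simultaneously ensure that the associativity and inverse equations are globally satisfied on full-dimensional pieces. Once this is accomplished, the Weil-type chunk theorem produces an $\cal L$-definable group $H$ carrying the same chunk, and the usual chunk-to-group argument, which writes any element of $G$ as a product of two generic chunk elements, supplies the desired definable isomorphism between $G$ and $H$.
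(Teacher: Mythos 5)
Your easy direction is fine: if $H$ is $\cal L$-definable, then $\cl(H)$ is $\cal L$-definable and, by \textbf{(D5)} together with o-minimality, $\dim H = \ldim_{\cal M} H = \ldim_{\cal M} \cl(H) = \dim \cl(H)$. For the converse, your plan of reducing to a chart in $M^n$, applying \textbf{(D6)} to get an $\cal L$-definable approximation $\tilde\mu$ of the multiplication, and feeding a group chunk into a Weil-type theorem is indeed the shape of the paper's argument. But there is a genuine gap in the middle that you dismiss as ``bookkeeping.''

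You assert that after suitable shrinking ``on some $\cal L$-definable full-dimensional subset the chunk operations literally equal the inherited group operations.'' This cannot be arranged in general. The set on which $\tilde\mu$ agrees with $\cdot$ is a subset of $G_0^2$, and $G_0$ itself is only definable, not $\cal L$-definable. Worse, there are strongly large definable groups that contain no infinite $\cal L$-definable set at all (the paper constructs one by taking a dense pair $\la \cal M, P\ra$ and transporting $P$ off the line via $x \mapsto (x,1)$), and then no full-dimensional $\cal L$-definable $Z \sub G_0^2$ exists. So you cannot build an $\cal L$-definable chunk sitting \emph{inside} the group, and the ``chunk-to-group'' step you invoke — writing elements of $G$ as products of generic chunk elements — has nothing to latch onto.

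The missing idea is precisely what the paper calls Steps III and IV. One must produce an $\cal L$-definable domain $X$ that is \emph{not} contained in $G$ — the paper takes $X = F(\Gamma^2 \cap U)$ and proves its $\cal L$-definability by a local-to-global argument (Section on local $\cal L$-definability, culminating in Corollary \ref{coordinate inj3}) — and then, separately, construct a \emph{definable but not $\cal L$-definable} embedding $h\colon X \to G$. The embedding is defined by the rule that $h(t)$ is the unique $r\in G$ with $\dim\{x \in V\cap G : F(t,x) = r\cdot x\}$ of co-dimension $<k$. The group chunk theorem (Theorem \ref{chunk}) is stated with this extra datum $h$ precisely because the $\cal L$-definable chunk lives outside $G$. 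Your proposal never constructs $h$, and it cannot be avoided; it is the heart of the proof, not an exceptional-set bookkeeping issue.
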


In the rest of this introduction, we discuss the content of the above theorem, state some consequences, and illustrate the main difficulties of its proof.

Theorem \ref{main} is in the spirit of a classical theme in model theory; namely,  to recover a mathematical object in some restricted language given data of possibly different nature. For example, the influential Pila-Wilkie theorem (\cite{pw}) recovers a semialgebraic subset of a set $X$  definable in an arbitrary o-minimal structure given a number-theoretic condition on $X$. In our case, we recover an o-minimal group from a group $G$  definable in an expansion of an o-minimal structure given a dimension-theoretic condition on the domain of $G$.

We   next point out the need for including definable isomorphisms in the conclusion of Theorem \ref{main}.
Suppose $\cal N=\la  \overline \R, P\ra$ is the expansion of the real field $\overline \R$ by the field $P$ of algebraic numbers. Let $K=\la \R, +\ra$
and consider the definable bijection $f:P+\pi P\to P^2$, with $f(x+\pi y)=(x,y)$. Let $G$  be the disjoint union of $K\sm (P+\pi P)$ and $P^2$, equipped with the group structure induced from $K$ via the identity map on the first part and via $f$ on the second. Then $G$ is not strongly large, 
since $\dim G=1$ and $\dim cl(G)=2$, but it is definably isomorphic to the $\cal L$-definable group $K$.

Theorem \ref{main} puts a constraint on the existence of new definable groups, which has already been the theme of previous research, such as in \cite{bv1} and \cite{bv2}. Let $\cal N=\la \cal M, P\ra$ denote one of the aforementioned pairs from \cite{egh}. As $P$ itself can be a definable group, there are new small definable groups (and a study for those has recently been initiated in \cite{bm}). Of course, there are also new large definable groups, such as the direct product $P\times M$, but as a consequence of the above theorem, there are no new large definable groups contained in $M$, or in any $1$-dimensional \cal L-definable set. A special case of this statement was proven in \cite[Lemma 7.3]{bv2}; namely, that there are no new definable \emph{subgroups} of $1$-dimensional \cal L-definable groups. Returning to arbitrary dimensions, observe that, by (\textbf{D3)}, $\dim (P \times M)=1$,  whereas $\dim cl(P\times M)=M^2=2$. Hence $P\times M$ is not strongly large and Theorem \ref{main} does not apply to it. What Theorem \ref{main} implies is that there are no new definable groups of dimension $n$ contained in $M^n$, or in any $n$-dimensional $\cal L$-definable set.


In the example $\cal N=\la \cal M, P\ra$, where  $P$ is a dense $\dcl$-independent set, our work implies that there are no new definable groups at all. This pair  recently received special attention in \cite{dms2} and even triggered new model-theoretic work at the general level of `$H$-structures' \cite{bv1}. The basic intuition is that a $\dcl$-independent set $P$ is at the other end of being a group, since there are no $\cal L$-definable relations between its elements. It is then easy to see that there are no new small definable groups, as those would have to be internal to $P$. Using the cone decomposition theorem from \cite{egh}, we prove that every definable group is definably isomorphic to a strongly large group (Theorem \ref{indsl}). Combined with Theorem \ref{main}, we obtain the following application.




\begin{theorem}\label{main2} Let $\cal M$ be an o-minimal expansion of an ordered group, and $\cal N=\la \cal M, P\ra$ an expansion of $\cal M$ by a dense $\dcl$-independent set $P$. Then every group definable in \cal N is definably isomorphic to a group definable in \cal M.
\end{theorem}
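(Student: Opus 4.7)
By Theorem \ref{main}, it suffices to verify its two hypotheses for this pair and to show that every definable group in $\cal N$ is definably isomorphic to a strongly large group. The two required properties---that every open definable set is $\cal L$-definable, and that $\cal N$ admits a dimension function compatible with $\cal M$---are established for this pair in \cite{dms2} and \cite{egh} respectively. The task thus reduces to proving the auxiliary claim stated as Theorem \ref{indsl} in the paper: every group $G$ definable in $\cal N$ is definably isomorphic to a strongly large group.

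The plan for this auxiliary claim is to work with the cone decomposition theorem from \cite{egh}. The underlying set of $G$ decomposes as a finite disjoint union of cones $C_i = f_i(P^{k_i}\times Y_i)$, with each $Y_i$ an $\cal L$-definable set of dimension equal to $\dim C_i$ and each $f_i$ an $\cal L$-definable continuous injection. A flat cone (one with $k_i=0$) is itself $\cal L$-definable and hence strongly large, whereas any top-dimensional cone with $k_i\geq 1$ has closure of dimension at least $k_i+\dim G$, strictly greater than $\dim G$, which is what obstructs strong largeness of $G$. My plan is to construct, for each non-flat top-dimensional cone, a definable bijection onto an $\cal L$-definable set of the same dimension, to assemble these local bijections (together with the identity on the remaining, lower-dimensional cones) into a global definable bijection $\phi:G\to G'$, and to observe that the resulting $G'$ is strongly large since its top-dimensional part is $\cal L$-definable. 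Transporting the group operation through $\phi$ then makes $G'$ into a strongly large definable group definably isomorphic to $G$, and Theorem \ref{main} yields an $\cal L$-definable group definably isomorphic to $G$.

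The $\dcl$-independence of $P$ enters decisively in the reorganization of a non-flat top-dimensional cone $f(P^k\times Y)$ as an $\cal L$-definable set of dimension $\dim Y$. Intuitively, independence guarantees that $P^k$ carries no $\cal L$-definable structure beyond that inherited from $M^k$, so that the family of slices $\{f(\{\vec p\}\times Y):\vec p\in P^k\}$ can be definably re-indexed as a family of pairwise disjoint $\cal L$-definable translates of $Y$ inside an $\cal L$-definable ambient set of dimension $\dim Y$. This reparametrization is specific to the independent case and would fail in settings where $P$ carries nontrivial algebraic structure---dense pairs, Mann pairs, and so on---consistent with the fact that Theorem \ref{main2} is not expected in those settings. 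The main technical obstacle is therefore the precise construction of this definable reorganization, which requires a careful analysis of the structure of small definable sets under $\dcl$-independence and constitutes the novel content of Theorem \ref{indsl}.
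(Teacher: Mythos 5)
Your reduction to Theorem \ref{main} plus the auxiliary Theorem \ref{indsl} is the right skeleton, and you correctly locate the needed hypotheses in \cite{dms2} and \cite{egh}. But your plan for Theorem \ref{indsl} has a genuine gap, and it is precisely the gap the paper's proof is designed to route around. You propose to take each top-dimensional cone $h(\cal J)$ with $\cal J=\bigcup_{g\in S}\{g\}\times J_g$ and $S$ infinite, and definably re-index it as pairwise disjoint translates of a $k$-dimensional set inside an $\cal L$-definable ambient of the same dimension $k=\dim G$. This cannot be done: you would be packing infinitely many pairwise disjoint definable sets, each of dimension $k$, into a $k$-dimensional ambient, which is impossible already for $k=1$ (one cannot pack infinitely many disjoint full-dimensional pieces into an interval). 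Worse, your construction never uses the group law, so it would show that \emph{every} $k$-dimensional definable set is in definable bijection with a strongly large one; but this is false --- $P\times(0,1)$ is a $1$-cone of dimension $1$ with $S=P$ infinite, and it is not in definable bijection with any strongly large set. Absent further structure, the best one can do is Corollary \ref{embedX}, which only achieves closure of dimension $k+1$.

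The paper's proof of Theorem \ref{indsl} instead argues by contradiction and uses the group operation essentially. Suppose $G$ is not in definable bijection with any strongly large set. If every top-dimensional cone in the decomposition of $G$ had finite $S$, then $G$ would be a union of a full set (a finite union of the full slices $h(g, J_g)$, by Remark \ref{hJfull} and Lemma \ref{fullunion}) and a lower-dimensional set, and Corollary \ref{slfull2} would give the desired bijection. So some top-dimensional cone has $S\sub P^m$ infinite; take $m$ maximal and, after the normalization of Claim \ref{partitionG}, pick $g_1,g_2\in S$ with all $2m$ coordinates distinct. Multiplying generically chosen points, $h(g_1,t_1)\cdot h(g_2,t_2)=h'(g_3,t_3)$, a counting argument forces some coordinate $a\in P$ of $g_1$ or $g_2$ to lie outside $g_3$, while injectivity of $h$ puts $a$ in the $\dcl$ of the remaining data --- contradicting $\dcl$-independence of $P$. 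It is this interplay of the group law with independence, not an abstract re-packing of cones, that rules out infinite $S$ and makes the proof work.
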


\noindent Again, a special case of this statement was previously proved, in \cite[Proposition 6.4]{bv1}; namely, that every definable \emph{subgroup} of $\la M^n, +\ra$ is \cal L-definable.  As a parallel note, Theorem \ref{main2} applies also to interpretable groups, as those are definably isomorphic to definable ones (by elimination of imaginaries \cite{dms2}). Elimination of imaginaries is known to fail in the general setting of Theorem \ref{main} (\cite{dms1}).




Finally, let us point out that Theorem \ref{main} establishes a conjecture for definable groups stated in \cite{el-bsl} and reformulated in \cite{egh}, for the case of strongly large groups. The conjecture stated that if $G$ is a definable group of dimension $k$, then there is a $\bigvee$-definable cover $\cal U$ of $G$, and a short exact sequence
$$
\begin{diagram}
\node{0}\arrow{e}\node{\CH} \arrow{e}\node{\cal U}
 \arrow{e}\node{K}
\arrow{e} \node{0}
\end{diagram} $$
where $\CH$  is $\bigvee$-definable  in \cal M, with $\dim \CH=k$, and $K$ is  definable and small.   Theorem \ref{main} implies the conjecture for $G$ strongly large, with $\CU=\CH=G$ and $K$ trivial. Namely, it answers \cite[Question 7.11]{egh}) affirmatively. It is worth noting that the above conjecture was inspired by an analogous theorem in a different setting (\cite{ep-sel2}), namely that of \emph{semi-bounded} o-minimal structures (see also Remark \ref{rmk-sbd} below).

There is a number of different  settings where at least the methods of this paper could  apply, such as that of $H$-structures, whereas a new direction is proposed in Section \ref{sec-future}. Let us now  proceed to describe the strategy of our proof.


$ $\\
\noindent\textbf{Sketch of the proof of Theorem \ref{main}}.
We  illustrate the main difficulties in proving the right-to-left direction of Theorem \ref{main}. Given a strongly large group $G$, we need to recover a suitable $\cal L$-definable domain $X$ and an $\cal L$-definable map $F:X^2\to X$ that can play the role of an $\cal L$-definable group  
definably  isomorphic to $G$. The construction is carried out in  Section \ref{sec-proof} through a series of six steps. 
 We next list those steps and verify them afterwards in an example.
 Let $G$ be a strongly large group with $G\sub M^n$ and $\dim G=k$. \\

\noindent (I) Recover an $\cal L$-definable map $F:V^2\to M^n$, with $\dim (G\triangle V)<k$, that agrees with $\cdot$ on a definable set $C\sub (V\cap G)^2$ with $\dim (V^2\sm C)<2k$ (equivalently, $\dim (G^2\sm C)<2k$).
This is possible because $G$ is strongly large. Indeed, in Lemma \ref{slF} we prove a generalization of \textbf{(D6)} for maps  $f$ with strongly large domain, which we can then apply to $\cdot : G^2\to G$ (Lemma \ref{firststep}). Of course, $C$ may not be $\cal L$-definable. If it were, we could directly skip to Step VI.\\


\noindent (II) Prove that $F$ satisfies `group-like' properties on an $\cal L$-definable subset $U$ of $V$ with  $\dim(V^2\sm U) <2k$, such as injectivity in each coordinate, and associativity. This is done using (I) and the fact that $\cdot$ is a group operation. Moreover,  $F(U)\sub V$.\\



\noindent(III) Show that  $X=F(\Gamma^2\cap U)$ is an $\cal L$-definable set with $\dim(V\sm X)<k$. This is done using earlier work from Section \ref{sec-localLdef} for extracting $\cal L$-definable sets.\\



\noindent (IV) Construct a suitable definable embedding $h:X \to G$. This is the heart of the whole proof. We first prove that
\begin{align}
(*)\,\,\,\,\,\,\,\,\, &\text{for every $t\in X$, there is unique $r\in G$, such that} \notag\\
 &\text{the set $\{x\in V\cap G : F(t,x)=r \cdot x\}$ is of co-dimension $<k$ in $V$ (or $G$),} \notag
\end{align}
and then  define $h(t)=r$ via (*).\\


\noindent
(V) Show that there is an $\cal L$-definable set $Z\sub X^2$ with $\dim (X^2\sm Z)<2k$  and $F(Z)\sub X$, such that for every $(t,x)\in Z$,
$$ h(F(t, x))=h(t)\cdot h(x).$$
The proof combines all information for $F$, $X$ and $h$ from Steps (II)-(IV).\\

\noindent (VI) Apply a  group chunk theorem to the quadruple $(Z, X, F, h)$ to conclude that $G$ is definably isomorphic to an $\cal L$-definable group. This group chunk theorem is proved in Section \ref{sec-chunk} in a higher generality, where $\cal M$ and $\cal N$ are arbitrary structures satisfying only some of the dimension axioms \textbf{(D1)-(D6)}.



\begin{example}\label{exa-ms}
Suppose $\cal N=\la \cal M, P\ra$ is an expansion of an ordered group  \cal M by a dense elementary substructure $P$. Let $K=\la M, +\ra$ and denote $\Gamma= M\sm P$. Consider the definable bijection $h:M \to M$ given by
$$h(x)=\begin{cases}
  x, & \text{ if $x\in \Gamma$},\\
  x+1, &\text{ if $x\in P$}.
\end{cases}
$$
and let $G=\la M, \cdot\ra$ be the induced group structure on $M$. Namely, if we write $F$ for the map $+$, then for every $(t,x)\in M^2$,
\begin{equation}
  h(F(t, x))=h(t)\cdot h(x).\notag
\end{equation}
Then $G$ is strongly large (even with $\cal L$-definable domain), and it is definably isomorphic to the $\cal L$-definable group $K$ via $h$. We would like to recover $h$ in an abstract way. This is done in Step (IV) below. However, we illustrate all steps from the general procedure.  Let $V=M$.

\smallskip
\noindent (I) For every $t, x\in M$, we have
$$t \cdot x=t+x \,\Lrarr\, t,x,t+x\in \Gamma.$$ So if we let for every $t\in \Gamma$, $C_t=\Gamma \cap (\Gamma - t)$, then $\cdot$ agrees with $F$ exactly on the set
$$C=\bigcup_{t\in \Gamma} \{t\}\times C_t.$$
Since each $C_t$ is co-small in $M$, it follows from \textbf{(D3)} that $\dim (M^2\sm C)<2$.

\smallskip
\noindent (II)  Let $U=V^2$.

\smallskip
\noindent (III) We have  $X=F(\Gamma^2)=\Gamma +\Gamma=M$.

\smallskip
\noindent (IV) We prove that here (*) actually yields exactly $h$. Namely,
\begin{align} h(t)= \text{ the unique $r\in G$, such that the set $\{x\in M : F(t,x)=r \cdot x\}$ is  co-small.} \notag
\end{align}
To see this, one could perform a direct computation, or argue as follows (also in preparation for the sort of arguments that take place in general). Consider the following equalities:
$$F(t, x)=h(F(t, x))=h(t) \cdot h(x)=h(t) \cdot x.$$
The first equality holds for all those $x\in \Gamma$ such that $F(t, x)\in \Gamma$, and hence, by injectivity of $F$ in the second coordinate, for co-small many $x$. The second equality holds for every $x\in M$. The third equality holds for all $x\in \Gamma$, again, co-small many. All together, $F(t,x)=r \cdot x$ holds for co-small many $x$.
Moreover, there can only be one such $r$ satisfying (*), because otherwise we would have two co-small disjoint subsets of $M$, a contradiction. We have thus shown that (*) yields $h$.

\smallskip
\noindent (V) Let $Z=X^2$.

\smallskip
\noindent (VI) The group chunk theorem here is not needed, as we actually have $K=\la X, F\ra$, and $h:K\to G$ is the desirable definable isomorphism.
\end{example}

\begin{remark}
The idea of recovering a group operation via (*) is a recast of a similar idea in \cite{mst}. In Section 1.3 of that reference, the authors recover a definable  isomorphism $h$ between $\la M, <, +\ra$ and an ordered group $\la M, <, \oplus\ra$, satisfying  additional properties, as follows:
$$t\mapsto \lim_{x\to \infty} [(t+x)\ominus x].$$
In Example \ref{exa-ms}, instead of letting $h(t)$ to be such a limit as $x\to \infty$, we require the equation $t+x=h(t) \cdot x$ to hold for co-small many $x$.
\end{remark}

\begin{remark}
The first attempt to recover an $\cal L$-definable domain in Step (III) of the general procedure would probably be to take $X=F(C)$. Besides, this set is always contained in $G$, and hence the rest of the analysis (IV)-(VI) could be simplified. But $F(C)$ need not be $\cal L$-definable; indeed, in Example \ref{exa-ms}, $F(C)=\Gamma$. On the other hand, the set $F(\Gamma^2\cap U)$ we construct is always $\cal L$-definable, but it need not be contained in $G$ (it would be if $\Gamma^2\sub C$). This can be seen in a similar example, where instead of permuting $P$ via $h$, we move it away from $M$, say to $P\times \{1\}$, via $h(x)=(x,1)$. Then $G$ contains no infinite $\cal L$-definable set (and if we let $F=+$ and $\Gamma=M\sm P$, then again  $F(\Gamma^2)= M$.)
\end{remark}

 $ $\\
\noindent\textbf{Acknowledgements.} I wish to thank Ya'acov Peterzil for pointing out the relevant literature and discussing the proof of the group chunk theorem in Section \ref{sec-chunk}. The relevant discussions took place during the trimester in model theory, combinatorics and valued fields, 2018, at the Institut Henri Poincar\'e. I also thank Alfred Dolich and Philipp Hieronymi for suggesting that there should be no new definable groups in the setting of Theorem \ref{main2}. Finally, I thank Chris Miller for his helpful feedback on an earlier version of this paper.

\section{Preliminaries}



In this section, we fix some notation, prove some basic facts, analyze strongly large sets, and show how the pairs $\la \cal M, P\ra$ from \cite{egh} fit  to the current setting.


\subsection{Notation}
The topological closure of a set $X\sub \R^n$ is denoted by $cl(X).$ If $X, Z\sub \R^n$, we call $X$ dense in $Z$ if $Z\sub cl(X\cap Z)$. We call $X$ co-dense in $Z$ if $Z\sm X$ is dense in $Z$. Given a set $X\subseteq \R^m \times \R^n$ and $a\in \R^m$, we write $X_a$ for
\[
\{ b \in \R^n \ : \ (a, b) \in X\}.
\]
We write $\pi:M^n\to M^{n-1}$ for the projection onto the first $n$ coordinates, unless stated otherwise. If $X, Y \subseteq \R$, we sometimes write $XY$ for $X\cup Y$.
A tuple of elements is denoted just by one element, and we write $b\sub B$ if $b$ is a tuple with coordinates from $B$.  Our use of the notions of a $k$-cell, open and closed box are standard. 
We say that a set $X$ has co-dimension $<k$ in $Y$, if $\dim (Y\sm X)<k$. We write $X\triangle Y=(X\sm Y)\cup (Y\sm X)$.
We say that a map $f: A\sub M^n\times M^n\to M^k$ is injective in each coordinate if, for every $a, b\in M^n$, $f(a, -)$ and $f(-, b)$ are injective. We identify $M^n\times M^n$ with $M^{2n}$. By an embedding we mean an injective map.




\subsection{Basic facts}

\begin{fact}\label{denseldim} Let $V\sub M^n$ be a $k$-cell and $X\sub V$ a definable set with $\ldim(V\sm X)<k$. Then $X$ is dense in $V$.
\end{fact}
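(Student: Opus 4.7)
The plan is to argue by contradiction. Suppose that $X$ is not dense in $V$. Since $V$ is a $k$-cell, the standard inductive definition supplies a coordinate projection $\pi : M^n \to M^k$ such that $\pi \upharpoonright V$ is a homeomorphism onto an open set $\pi(V) \subseteq M^k$. The failure of density then yields an open box $B' \subseteq \pi(V)$ with $B' \cap \pi(X) = \emptyset$; its preimage $W := (\pi \upharpoonright V)^{-1}(B')$ is a non-empty $\cal L$-definable subset of $V \setminus X$.

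Because $W$ is homeomorphic, via the $\cal L$-definable map $\pi \upharpoonright V$, to the open box $B' \subseteq M^k$, its o-minimal dimension equals $k$. By axiom \textbf{(D5)} the $\dim$-value of the $\cal L$-definable set $W$ coincides with its o-minimal dimension, so $\dim W = k$. Monotonicity of $\dim$, which is a direct consequence of \textbf{(D2)}, then gives
\[
\dim(V \setminus X) \geq \dim W = k,
\]
contradicting the hypothesis $\ldim(V \setminus X) < k$.

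There is no real obstacle in this argument: it is a routine combination of the basic geometry of cells with the dimension axioms \textbf{(D2)} and \textbf{(D5)}. The only mildly delicate transition is from the set $V \setminus X$, which in general is only definable in $\cal N$, to the $\cal L$-definable witness $W$ whose dimension can be computed from the o-minimal structure on $\cal M$; axiom \textbf{(D5)} is precisely the bridge that makes the comparison of the two dimensions legitimate. Note that assumption (a) of Theorem \ref{main} is not needed for this fact.
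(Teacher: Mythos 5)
Your proof is correct and takes essentially the same approach as the paper's. The paper argues directly (every relatively open subset $B$ of $V$ has $\dim B = k$, so $\dim(B\cap X) = k$ and in particular $B\cap X \neq \emptyset$), while you argue the contrapositive (non-density yields an $\cal L$-definable relatively open $W \subseteq V\setminus X$ with $\dim W = k$, contradicting monotonicity); both hinge on the same two ingredients, namely that relatively open subsets of a $k$-cell have o-minimal dimension $k$ and that \textbf{(D2)} and \textbf{(D5)} let one compare $\dim$ of the $\cal N$-definable set $V\setminus X$ with that of an $\cal L$-definable witness inside it.
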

\begin{proof} Since $V$ is a $k$-cell, a relatively open subset $B$ of $V$ has dimension $k$.   Since $\dim (B\sm X)<k$, we obtain  $\dim (B\cap X)=k$. In particular, $B\cap X\ne \es$.
\end{proof}

\begin{fact}\label{denseZ0}  Let $Z\sub M^n$ be an $\cal L$-definable set of dimension $k$, and $Z'\sub Z$ a $k$-cell. If a definable set is dense in $Z$, then so is it in $Z'$.
\end{fact}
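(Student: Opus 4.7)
The plan is local: for a point $p \in Z'$ and a small open neighborhood $U$ of $p$, I want to produce a point of $X \cap Z'$ in $U$. Density of $X$ in $Z$ already gives a point of $X \cap Z$ in $U$, so the only possible obstruction is that every such witness might lie in $Z \sm Z'$.

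My first step would be to isolate the ``bad'' locus $B := Z' \cap cl(Z \sm Z')$. Since $Z'$ is disjoint from $Z \sm Z'$, the set $B$ lies in the topological frontier $cl(Z \sm Z') \sm (Z \sm Z')$, and the standard o-minimal bound on frontiers gives $\dim B < \dim(Z \sm Z') \leq \dim Z = k$. I can then invoke Fact~\ref{denseldim} with the $k$-cell $Z'$ and its $\cal L$-definable subset $Z' \sm B$ (whose complement in $Z'$ is the $<k$-dimensional set $B$) to conclude that $Z' \sm B$ is dense in $Z'$.

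It will then suffice to show $Z' \sm B \sub cl(X \cap Z')$, since then $Z' \sub cl(Z' \sm B) \sub cl(X \cap Z')$, which is exactly what density of $X$ in $Z'$ asserts. This last step is the easy part: for $p \in Z' \sm B$, by definition of $B$ there is an open $U \ni p$ with $U \cap (Z \sm Z') = \es$, hence $U \cap Z \sub Z'$. So every smaller open box $V \ni p$ with $V \sub U$ meets $X \cap Z$ by density of $X$ in $Z$, and any such witness automatically lies in $X \cap Z'$.

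I do not anticipate a serious obstacle; the only non-trivial ingredient is the o-minimal frontier inequality $\dim(cl(S) \sm S) < \dim S$ applied to the $\cal L$-definable set $S = Z \sm Z'$, together with Fact~\ref{denseldim} to push density across the thin set $B$.
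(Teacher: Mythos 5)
Your argument is correct. The paper itself does not supply a proof of Fact~\ref{denseZ0}; it only cites \cite[Lemma~2.6]{el-pw}, so there is no internal proof to compare against, but your reasoning stands on its own: the set $B = Z' \cap cl(Z\sm Z')$ is contained in the frontier $cl(Z\sm Z')\sm(Z\sm Z')$ of the $\cal L$-definable set $Z\sm Z'$, hence has o-minimal dimension $<k$ (the empty case being trivial); Fact~\ref{denseldim} then gives density of $Z'\sm B$ in the $k$-cell $Z'$; and for any $p\in Z'\sm B$ a sufficiently small neighborhood meets $Z$ only inside $Z'$, so the density of $X$ in $Z$ transfers verbatim to density of $X\cap Z'$ near $p$. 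Chaining $Z'\sub cl(Z'\sm B)\sub cl(X\cap Z')$ finishes the proof. One small remark worth making explicit: the $k$-cell hypothesis on $Z'$ is genuinely used, since Fact~\ref{denseldim} relies on every relatively open subset of a $k$-cell having dimension $k$, which fails for arbitrary $\cal L$-definable sets of dimension $k$.
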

\begin{proof}
\cite[Lemma 2.6]{el-pw}.
\end{proof}


The following lemma will be used in the proof of Lemma \ref{Finj}. It generalizes \cite[Proposition 4.19]{egh}.

\begin{lemma}\label{inj-lemma}
Let  $F:X\sub M^n\to M^m$ be an $\cal L$-definable map, and $D\sub X$ a definable set with $\ldim(X\sm D)< \dim X$. Assume  that $F_{\res D}$ is injective. Then there is an $\cal L$-definable set $Y\sub X$ such that $\dim (X\sm Y)<\dim X$ and $F_{\res Y}$ is  injective. 
\end{lemma}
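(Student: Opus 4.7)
The plan is to let $Y$ be $X$ with the $\cal L$-definable non-injectivity locus of $F$ excised. Explicitly, I would set
$$B=\{x\in X : \exists\, y\in X,\ y\neq x,\ F(x)=F(y)\}$$
and take $Y=X\sm B$. Since $F$ and $X$ are $\cal L$-definable, so are $B$ and $Y$, and $F_{\res Y}$ is injective by the very definition of $B$. The whole task is therefore to show that $\dim B<k$, where $k:=\dim X$.

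To control $\dim B$, I would split $B=(B\cap D)\cup(B\sm D)$. The second piece is contained in $X\sm D$, hence has dimension $<k$ by hypothesis. For the first piece, the injectivity of $F$ on $D$ does the work: any $x\in B\cap D$ comes with a witness $y\neq x$ with $F(y)=F(x)$, and this $y$ cannot lie in $D$, so $y\in X\sm D$ and $F(x)\in F(X\sm D)$. Thus $F(B\cap D)\sub F(X\sm D)$, and because $F$ is injective on $D\supseteq B\cap D$, axiom \textbf{(D4)} gives
$$\dim(B\cap D)=\dim F(B\cap D)\leqs \dim F(X\sm D).$$

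What remains is the general statement $\dim F(A)\leqs \dim A$ for any definable $A$ on which $F$ is defined. I would derive this from \textbf{(D2)} and \textbf{(D3)} by the standard fiber decomposition: partition $F(A)$ into the definable strata $I_d=\{t\in F(A):\dim(F^{-1}(t)\cap A)=d\}$, which are definable by \textbf{(D3)(a)}; then \textbf{(D3)(b)} gives $\dim(F^{-1}(I_d)\cap A)=\dim I_d+d\geqs \dim I_d$, and taking maxima over the finitely many relevant $d$ via \textbf{(D2)} yields $\dim A\geqs \dim F(A)$. Applied with $A=X\sm D$, this yields $\dim F(X\sm D)\leqs \dim(X\sm D)<k$, so $\dim(B\cap D)<k$, and combining with the first piece gives $\dim B<k$ as required.

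I do not expect a substantial obstacle. The one conceptual point worth flagging is that although $D$ is only $\cal N$-definable, its $\cal L$-definable \emph{shadow} $B$ is the right object to work with, and the hypothesis on $D$ feeds into the bound only through the inequality $\dim(X\sm D)<k$ plus the image-dimension estimate. Everything else is a short dimension count using solely axioms \textbf{(D2)}--\textbf{(D4)}.
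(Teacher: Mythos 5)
Your proof is correct, and it takes a genuinely different route from the paper's. The paper first isolates the $\cal L$-definable "finite-fiber" locus $T=\{a\in X: F^{-1}(F(a))\text{ is finite}\}$ and shows $\dim(X\sm T)<k$ (the key dimension-drop fact about images of o-minimal cells doing the work), then appeals to uniform finiteness in o-minimal structures to produce, on a putative bad configuration, an $\cal L$-definable full-dimensional $B\sub T$ on which $F$ is injective together with an $\cal L$-definable injective "companion" map $f:B\to T\sm B$ with $F\circ f=F$; since $F_{\res D}$ is injective, $f$ must move $B\cap D$ into $X\sm D$, which is too small, a contradiction. Your argument cuts all that machinery: you excise the $\cal L$-definable non-injectivity locus $B$ outright, and the single observation that any witness $y$ to non-injectivity of $x\in B\cap D$ must lie outside $D$ (by the hypothesis on $D$) gives $F(B\cap D)\sub F(X\sm D)$, after which \textbf{(D4)} and the image-dimension estimate $\dim F(A)\leqs\dim A$ finish the dimension count. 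Both proofs ultimately rest on that image-dimension inequality --- the paper imports it via a cited o-minimal fact, whereas you rederive it directly from \textbf{(D2)} and \textbf{(D3)} using the fiber partition $I_d$ --- but yours avoids uniform finiteness and the somewhat delicate construction of $B$ and $f$ entirely, making it shorter and more self-contained. The one caveat worth keeping in mind is that this is a "remove the whole bad locus" argument, so it gives no information toward the paper's follow-up question of whether one can additionally arrange $D\sub Y$; the paper's approach, while more involved, at least engages with the structure of the fibers.
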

\begin{proof} Assume $\dim X=k$. 
 Denote
$$T=\{a\in X: F^{-1}(F(a)) \text{ is finite}\}.$$
We claim that $\dim(X\sm T)<k$. Assume not, and let $C\sub X\sm T$ be a $k$-cell. Then $\dim (C\cap D)=k$. Now, by \cite[Fact 2.9]{egh}, $F(C)$ has dimension $s<k$. In particular, $F(C)$ is in \cal L-definable bijection with a subset of $M^s$. Hence $C\cap D$ is in definable bijection with a subset of $M^s$, contradicting \textbf{(D4)}.

Now, by uniform finiteness in o-minimal structures, one can easily find
\begin{itemize}

\item an $\cal L$-definable set $B\sub T$ of dimension $k$, such that $F_{\res B}$ is injective, and
\item an $\cal L$-definable map $f:B\to T\sm B$, such that for every $x\in B$,
$$F(x)=F(f(x)).$$
\end{itemize}
Observe that then $f$ is injective, since if $x,y\in B$ and $f(x)=f(y)$, then $F(x)=F(f(x))=F(f(y))=F(y)$ which implies $x=y$. 
Moreover, since $F_{\res D}$ is injective,
$$f(B\cap D)\sub f(B)\sm D.$$
But 
$\ldim(B\cap D)=k$, and hence by injectivity of $f$, the set on the right also has  dimension $k$, contradicting $\ldim(X\sm D)<k$.
\end{proof}

\begin{question} In  Lemma \ref{inj-lemma}, can  it moreover be $D\sub Y$?
\end{question}

\subsection{Strongly large sets}\label{sec-sl}
Here we prove some statements about strongly large sets. We also introduce the notion of a \emph{full} set.  The first lemma extends property \textbf{(D6)} to functions $f$ whose domain is any strongly large set, instead of just $M^n$.



\begin{lemma} \label{slF}
Let $X\sub M^n$ be strongly large of dimension $k$. Then every definable map $f:X\to M$ agrees with an $\cal L$-definable map $F:M^n\to M$ outside a definable set $S$ of dimension $<k$.
\end{lemma}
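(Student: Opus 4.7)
The strategy is to exploit strong largeness to reduce the problem to an application of axiom \textbf{(D6)} in dimension $k$. Since $X$ is strongly large, $cl(X)$ has $\dim$-dimension $k$; moreover $cl(X)$ is $\cal L$-definable (as its complement is an open definable set, which is $\cal L$-definable by hypothesis), so by \textbf{(D5)} its o-minimal dimension is also $k$. Using o-minimal cell decomposition, I would write $cl(X) = C_1 \sqcup \cdots \sqcup C_m \sqcup Y'$, where each $C_i$ is an $\cal L$-definable $k$-cell and $Y'$ has o-minimal dimension $<k$. Each $k$-cell $C_i$ admits an $\cal L$-definable homeomorphism $\pi_i : C_i \to U_i$ onto an open subset $U_i \sub M^k$, given by a suitable coordinate projection.

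Next, I would transfer the problem to $M^k$ cell by cell. For each $i$, define $g_i : M^k \to M$ by $g_i(u) = f(\pi_i^{-1}(u))$ when $u \in U_i$ and $\pi_i^{-1}(u) \in X$, and $g_i(u) = 0$ otherwise. This is a definable map, so \textbf{(D6)} applied in dimension $k$ yields an $\cal L$-definable $G_i : M^k \to M$ that agrees with $g_i$ outside a definable set $S_i \sub M^k$ with $\dim S_i < k$. Pulling back, set $F_i = G_i \circ \pi_i : C_i \to M$, which is $\cal L$-definable, and paste the $F_i$'s into a single $\cal L$-definable map $F : M^n \to M$ by letting $F(x) = F_i(x)$ for $x \in C_i$ and $F(x) = 0$ otherwise; this is $\cal L$-definable since the $C_i$'s are.

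To conclude, set $S = (X \cap Y') \cup \bigcup_{i=1}^m \pi_i^{-1}(S_i)$. For $x \in X \sm S$, we have $x \in C_i$ for some $i$ and $\pi_i(x) \notin S_i$, so $F(x) = G_i(\pi_i(x)) = g_i(\pi_i(x)) = f(x)$. Each $\pi_i^{-1}(S_i)$ has dimension $<k$ by \textbf{(D4)} and \textbf{(D5)} (as $\pi_i$ is an $\cal L$-definable bijection and $S_i$ is $\cal L$-definable of o-minimal dimension $<k$), while $\dim(X \cap Y') \leq \dim Y' < k$ by monotonicity; hence $\dim S < k$ by \textbf{(D2)}.

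The main subtlety to guard against is that $X \cap C_i$ need not be dense in, or even of dimension $k$ within, a given $C_i$. This is harmless, however: whatever definable $g_i$ we produce on $M^k$, axiom \textbf{(D6)} still delivers an $\cal L$-definable approximation $G_i$ outside a set of dimension $<k$, and the disagreement of $f$ with $F$ on $X \cap C_i$ is controlled entirely by $\pi_i^{-1}(S_i)$. The whole argument is essentially a cellwise bookkeeping exercise whose engine is \textbf{(D6)} together with the fact that strong largeness pins $X$ inside an $\cal L$-definable envelope of the same dimension.
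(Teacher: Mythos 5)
Your proof is correct and follows essentially the same strategy as the paper's: decompose $cl(X)$ into cells, project the $k$-cells onto coordinate subspaces of $M^k$, apply \textbf{(D6)} on $M^k$, and pull back (the paper compresses this to a single cell with "it is enough to work with one of these," whereas you make the cellwise pasting explicit). One small slip: the exceptional set $S_i$ supplied by \textbf{(D6)} is only $\cal N$-definable, not $\cal L$-definable as you assert, so there is no role for \textbf{(D5)} here — the bound $\dim \pi_i^{-1}(S_i) < k$ already follows from $\dim S_i < k$ (given by \textbf{(D6)}) and \textbf{(D4)}; this does not affect the validity of the argument.
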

\begin{proof}
By working with the coordinate functions of $f$, we may assume that $m=1$. Indeed, if we find a suitable set $S_i$ for the $i$-th coordinate $f_i$, then $S=\bigcup_i S_i$ works for $f$, by \textbf{(D2)}.

We may assume $X\sub M^k$. Indeed, $cl(X)$ is a finite union of cells. If $C$ is one of the cells and $\ldim(C\cap X)<k$,  we can disregard it. Otherwise, $\ldim(C\cap X)=k=\ldim cl(C\cap X)$, and hence it is enough to work with one of these. After projecting $C$ onto suitable coordinates, we may assume that $X\sub M^k$.

Define $H:M^k\to M$  as $H(x)=f(x)$, if $x\in X$, and $0$, otherwise. This map $H$ is definable, and hence, by \textbf{(D6)}, it agrees with an $\cal L$-definable map $F:M^k\to M$ outside a set $S$ of dimension $<k$. Then $f$ agrees with $F$ outside $S\cap X$.
\end{proof}

The above lemma  supports the intuition that strongly large sets behave like $\cal L$-definable sets. We strengthen the notion of being strongly large as follows.


\begin{defn}
A definable set $X$ is called \emph{full} if $\dim(cl(X)\sm X)<\dim X$.
\end{defn}

By \textbf{(D5)}, every $\cal L$-definable set is full. By \textbf{(D2)}, a full set is strongly large. The converse is not true; for example, let $X$ be the disjoint union of an open interval and an infinite small set.
Some natural examples of full sets come from the setting of \cite{egh}, see Fact \ref{exa-hJ} below.


In Section \ref{sec-proof}, we will use the following consequence of Lemma \ref{slF}.

\begin{cor}\label{slfull} Every strongly large set is a union of a full set and a set of smaller dimension.
\end{cor}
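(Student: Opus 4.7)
My plan is to use Lemma \ref{slF} to approximate $X$ by an $\cal L$-definable set and then take $Y$ to be the intersection of $X$ with this approximation.

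Let $X \sub M^n$ be strongly large with $\dim X = k$, and set $Z := cl(X)$. I first note that $Z$ is $\cal L$-definable: its complement is the interior of $M^n \sm X$, which is open and definable, and hence $\cal L$-definable by hypothesis. Strong largeness of $X$ gives $\dim Z = k$, and $Z$ is itself full (and in particular strongly large) because every $\cal L$-definable set is full.

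The next step is to apply Lemma \ref{slF} to the characteristic function $\chi_X : Z \to M$, which is definable in $\cal N$, obtaining an $\cal L$-definable map $F : M^n \to M$ and a definable set $S$ with $\dim S < k$ such that $\chi_X = F$ on $Z \sm S$. Setting $W := F^{-1}(1) \cap Z$, which is $\cal L$-definable, one verifies that $W \tr X \sub S$, so that $\dim W = k$ and $\dim(X \sm W) < k$. The candidate decomposition will be $X = Y \cup (X \sm Y)$ with $Y := W \cap X$; the inclusion $X \sm Y \sub W \tr X$ immediately gives $\dim(X \sm Y) < k$, and hence $\dim Y = k$.

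The remaining point is the fullness of $Y$, which I would verify by splitting $cl(Y) \sm Y$ into two pieces. Any $z \in cl(Y) \sm Y$ either lies outside $W$, in which case $z \in cl(W) \sm W$, or lies in $W$ but not in $X$, in which case $z \in W \sm X \sub W \tr X$. The first set has dimension $<k$ because $W$ is $\cal L$-definable and hence full; the second has dimension $<k$ by construction. What I regard as the main (mild) obstacle is that one cannot simply take $Y := W$, since $W$ is $\cal L$-definable but need not lie inside $X$; both directions of the approximation $W \tr X \sub S$ are essential, to conclude simultaneously that $Y \sub X$ has small complement in $X$ and that the frontier $cl(Y) \sm Y$ is controlled.
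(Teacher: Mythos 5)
Your proof is correct and uses essentially the same idea as the paper's: apply \textbf{(D6)} (via Lemma~\ref{slF}) to the characteristic function of the given set, take a level set of the resulting $\cal L$-definable approximation as the full part, and absorb the discrepancy into the small remainder. The paper reduces to $M^k$ before invoking \textbf{(D6)} and takes $F^{-1}(1)\sm S$ as the full piece, whereas you work over the $\cal L$-definable set $cl(X)$ and take $W\cap X$, but this is only a cosmetic variation.
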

\begin{proof}
Let $Y\sub M^n$ be a strongly large set of dimension $k$. As in the proof of Lemma \ref{slF}, we may assume that $n=k$. Let $f: M^n\to M$ be a characteristic function for $Y$; namely, fix two elements $0, 1\in M$ and let $f(x)=1$, if $x\in Y$, and $f(x)=0$, otherwise.  By \textbf{(D6)}, $f$ agrees with an $\cal L$-definable map $F:M^n\to M$ outside a definable set $S$ of dimension $<n$. Let $C=\{x\in M^n :F(x)=1\}$. Since $f$ and $F$ agree on $C\sm S$, this means that $C\sm S\sub Y$. Since
$$cl(C\sm S)\sm (C\sm S)\sub (cl(C)\sm C)\cup S,$$
and the latter set has dimension $<n$, we obtain that $C\sm S$ is full. Since also
$$Y=(C\sm S) \cup (Y\cap S),$$
we are done.
\end{proof}


The above conclusion may fail if we do not assume that the given set is strongly large. For example,  consider any infinite small set. Also, in Corollary \ref{slfull2} below, we prove a partial converse of the above corollary for a pair $\cal N=\la \cal M, P\ra$ where $P$ is a dense $\dcl$-independent set. We do not know whether that converse is true in general.


In Section \ref{sec-indep}, we will also need the following.

\begin{lemma}\label{fullunion} A finite union of full sets is full.
\end{lemma}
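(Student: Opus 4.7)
The plan is to exploit the fact that topological closure commutes with finite unions, together with axiom \textbf{(D2)} (monotonicity/additivity of $\dim$ on unions) and the definition of fullness.

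Let $X_1,\ldots,X_m$ be full sets and set $X=\bigcup_i X_i$, $k=\dim X=\max_i \dim X_i$. Since $cl(X)=\bigcup_i cl(X_i)$, one has the set-theoretic inclusion
\[
cl(X)\setminus X \;\subseteq\; \bigcup_{i=1}^m \bigl(cl(X_i)\setminus X_i\bigr).
\]
By \textbf{(D2)} it therefore suffices to show that $\dim(cl(X_i)\setminus X_i)<k$ for each $i$. This splits into two cases. If $\dim X_i = k$, then fullness of $X_i$ gives $\dim(cl(X_i)\setminus X_i)<\dim X_i = k$. If $\dim X_i<k$, then fullness together with \textbf{(D2)} yields $\dim cl(X_i)=\max\{\dim X_i,\dim(cl(X_i)\setminus X_i)\}=\dim X_i<k$, so certainly $\dim(cl(X_i)\setminus X_i)\leq \dim cl(X_i)<k$.

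Combining the two cases and applying \textbf{(D2)} to the finite union above gives $\dim(cl(X)\setminus X)<k=\dim X$, which is exactly the fullness of $X$. I do not expect any serious obstacle here: the only subtlety is making sure to handle the summands of smaller dimension (for which the inequality is not handed to us directly by fullness of $X_i$), and this is dispatched by first noting that for a full set the closure has the same dimension as the set itself.
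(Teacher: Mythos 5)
Your proof is correct and follows the same route as the paper: express $cl(X)\setminus X$ inside $\bigcup_i(cl(X_i)\setminus X_i)$ using $cl(X)=\bigcup_i cl(X_i)$, then apply \textbf{(D2)}. The case split on whether $\dim X_i=k$ or $\dim X_i<k$ is unnecessary, since fullness of $X_i$ gives $\dim(cl(X_i)\setminus X_i)<\dim X_i\le\dim X$ in one step.
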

\begin{proof} Let $X=X_1\cup \dots \cup X_m$, where each $X_i$ is a full set. By o-minimality, the union of the closures of finitely many $\cal L$-definable sets equals the closure of their unions. It follows easily that $cl(X)=\bigcup_i cl(X_i)$.
Therefore,
$$cl(X)\sm X=\left(\bigcup_i cl(X_i) \right)\sm \left( \bigcup_i X_i\right)= \bigcup_i \left(cl(X_i)\sm \bigcup_i X_i\right)\sub \bigcup_i (cl(X_i)\sm X_i),$$
and hence $\ldim(cl(X)\sm X)\le \max_i \ldim (cl(X_i)\sm X_i)<\dim X_i \le \dim X$.
\end{proof}


\subsection{The setting of \cite{egh}}\label{sec-egh} In \cite{egh} we studied pairs $\cal N=\la \cal M, P\ra$, where $P\sub M^n$, such that three tameness conditions hold. Following \cite{dg}, let us call a definable set $X\sub M^n$ \emph{large} if there is an $\cal L$-definable map $f:M^{nk}\to M$ such that $f(X^k)$ contains an open interval. Otherwise, it is called \emph{small}. The three conditions in \cite{egh} (see Section 2 there for more details) are: (I) $P$ is small, (II) $Th(\cal N)$ is near-model complete, and (III) every open definable  set is \cal L-definable. In \cite[Section 2.2]{egh}, the following examples were shown to fall into this category: (a) dense pairs, (b) expansions of the real field by a multiplicative subgroup with the Mann property, or by a  dense subgroup of the unit circle or of an elliptic curve, (c)  expansions by a dense independent set.

For  the rest of this section, let $\cal N=\la M, P\ra$ satisfy  conditions (I)-(III) above. In \cite{egh},  a suitable notion of dimension was introduced,
which we describe next.

\begin{defn}[\cite{egh}]\label{def-supercone}
A \emph{supercone} $J\sub M^k$, $k\ge 0$, and its \emph{shell} $sh(J)$ are defined recursively as follows:
\begin{itemize}
\item $M^{0}=\{0\}$ is a supercone, and $sh(M^{0})=M^{0}$.

\item A definable set $J\sub M^{n+1}$ is a supercone if $\pi(J)\sub M^n$ is a supercone and there are  $\cal L$-definable continuous maps $h_1, h_2: sh(\pi(J))\to M\cup \{\pm\infty\}$ with $h_1<h_2$, such that for every $a\in \pi(J)$, $J_a$ is contained in $(h_1(a), h_2(a))$ and it is co-small in it. We let $sh(J)=(h_1, h_2)_{sh(\pi(J))}$.
\end{itemize}
\end{defn}

Note that, $sh(J)$ is the unique open cell in $M^k$ such that $cl(sh(J))=cl(J)$.

\begin{defn}[Large dimension \cite{egh}]\label{def-large}
Let $X\sub M^n$ be definable. If $X\ne \emptyset$, the \emph{large dimension} of $X$ is the maximum $k\in \bb N$ such that $X$ contains a set of the form $f(J)$, where $J\sub M^k$ is a supercone and $f:sh(J)\to M^n$ is an $\cal L$-definable continuous injective map. The large dimension of the empty set is defined to be $-\infty$. 
\end{defn}

The large dimension was used in \cite{egh} to prove a cone decomposition theorem for all definable sets, in analogy with the cell decomposition theorem known for o-minimal structures. A consequence of this theorem was that the large dimension satisfies all properties \textbf{(D1)-(D6)} of the current paper. More precisely, these properties are established in \cite[Corollaries 5.3 and 5.5, Theorem 5.7, and Lemma 6.11]{egh}. Moreover, a definable set has large dimension zero if and only if it is small.
We will make use of the cone decomposition theorem in Section \ref{sec-indep}, and we delay it until then.
For now, let us  point out some basic facts, again to be used in Section \ref{sec-indep}, but can be stated under our general assumptions.

In what follows, the dimension $\dim$ denotes   the  large dimension.

\begin{fact}\label{fact-indt}
Let $X\sub M^k$ be a definable set of dimension $k$, and $A_0\sub M$ a finite set. Then there is $t\in X$ which is $\dcl$-independent over $A_0 P$.
\end{fact}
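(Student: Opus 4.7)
The plan is to exploit the supercone structure guaranteed by $\dim X = k$, and then produce a generic point inside the supercone by induction plus a standard saturation argument.

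First, since $\dim X = k$, Definition \ref{def-large} furnishes a supercone $J \sub M^k$ and an $\cal L$-definable continuous injection $f \colon sh(J) \to M^k$ with $f(J) \sub X$. As $f$ is $\cal L$-definable and injective, $\dcl(A_0 P s) = \dcl(A_0 P f(s))$ for every $s \in sh(J)$; and because $\cal M$ is o-minimal, $\dcl$ restricted to $\cal M$ is a pregeometry, so $\dcl$-independence of a $k$-tuple over $A_0 P$ amounts to its $\dcl$-transcendence degree being $k$, an invariant preserved by the $\cal L$-definable bijection $f$. It therefore suffices to find $s \in J$ that is $\dcl$-independent over $A_0 P$, and then take $t := f(s) \in X$.

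Next, I would induct on $k$, relying on the fact that a supercone in $M^k$ has large dimension $k$, so the induction hypothesis is available for the supercone $\pi(J)$. The base case $k = 0$ is vacuous. For the inductive step, the recursive definition of supercone gives that $\pi(J) \sub M^{k-1}$ is itself a supercone and that each fiber $J_a$, for $a \in \pi(J)$, is co-small in an open interval $(h_1(a), h_2(a))$ determined by $\cal L$-definable continuous $h_1 < h_2$. By the induction hypothesis, pick $s' = (s_1, \dots, s_{k-1}) \in \pi(J)$ that is $\dcl$-independent over $A_0 P$. It then remains to produce $s_k \in J_{s'}$ with $s_k \notin \dcl(A_0 P s')$; by the exchange property of the pregeometry on $\cal M$, the tuple $(s', s_k) \in J$ will be $\dcl$-independent over $A_0 P$.

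To produce such an $s_k$, I would work in the saturated monster model (per the standard convention) and use compactness. The partial type over $A_0 P s'$ consisting of the formula $x \in J_{s'}$ together with all formulas $x \ne d$ for $d \in \dcl(A_0 P s')$ is finitely satisfiable: any finite fragment excludes only finitely many values, while $J_{s'}$, being co-small in an open interval, is infinite. Saturation then yields a realization. The most delicate point will be the transfer of $\dcl$-independence through the $\cal L$-definable injection $f$ in the first step, which hinges on $\dcl$ being a pregeometry in the o-minimal reduct; the remainder is a routine induction on $k$ together with the standard saturation argument.
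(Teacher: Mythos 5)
Your overall strategy — obtain a supercone, then induct on $k$ using the recursive structure of supercones and a pregeometry argument for the fibre — is the same as the paper's, but the route to the supercone and the justification of the inductive step differ, and both differences introduce issues. First, the paper invokes a result of \cite{egh} (their Theorem 5.7(1)) to get a supercone $J$ literally contained in $X$, so no transport of independence is needed. You instead go through Definition \ref{def-large} and a map $f:sh(J)\to M^k$, and then assert $\dcl(A_0 P s)=\dcl(A_0 P f(s))$; this is only correct after enlarging $A_0$ to contain the parameters of $f$ (harmless, since strengthening the independence requirement preserves the conclusion, but it needs to be said). Second, and more importantly, the paper's inductive step is not a bare compactness argument: it quotes the fact from \cite[Section 6]{egh} that $\scl(A)=\dcl(AP)$ is a pregeometry whose associated dimension for \emph{definable} sets agrees with the large dimension $\dim$. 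That is precisely what guarantees that the one-dimensional fibre $J_{s'}$ is not covered by $\scl(A_0 s')$ \emph{inside the given structure} $\cal N$. Your saturation argument produces a realization of the type in a monster model, but the Fact is a statement about the fixed $\cal N$ (with its fixed $P$): passing to an elementary extension changes $P$, and there is no a priori reason the realization lies in $M(\cal N)$. Indeed, without the $\scl$-dimension machinery, a countable union of small sets such as $\dcl(A_0 P s')$ could in principle cover a $1$-dimensional definable set in a small model, so the finitary compactness argument does not by itself close the gap. You would either need to assume $\cal N$ is sufficiently saturated (and then verify that the hypotheses and conclusion of the Fact transfer), or, as the paper does, cite the result that $\scl$-dimension of definable sets equals $\dim$, which directly yields the required $s_k\in J_{s'}\setminus\scl(A_0 s')$ in $\cal N$ itself.
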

\begin{proof} By \cite[Theorem 5.7(1)]{egh},  $X$ contains a supercone, and hence we may assume that $X$ is a supercone. Consider the operator $\scl$ that maps $A\sub M$ to $\scl(A)=\dcl(AP)$. By \cite[Section 6]{egh}, $\scl$ defines a pregeometry and the corresponding $\scl$-dimension for definable sets agrees with $\dim$. It is then easy to see from the definition of supercones,  by induction on $k$, that there is $t\in X$ which is $\scl$-independent over $A_0$, and hence $\dcl$-independent over $A_0 P$, as required.
\end{proof}

In the next fact, we draw a connection to the full sets from the last subsection.

\begin{fact}\label{exa-hJ} let $f(J)$ be as in Definition \ref{def-large}. Namely, $J\sub M^k$ is a supercone and $f:sh(J)\to M^n$ is an $\cal L$-definable continuous injective map. Then $f(J)$ is a full set.
\end{fact}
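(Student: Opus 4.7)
The plan is to show that $\dim(cl(f(J)) \setminus f(J)) < k$, where $k = \dim f(J)$. The proof splits into estimating two pieces of $cl(f(J)) \setminus f(J)$, one coming from the boundary of $f(sh(J))$ and one coming from $f(sh(J)) \setminus f(J)$.

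First I would establish the key auxiliary estimate: for any supercone $J \subseteq M^k$, we have $\dim(sh(J) \setminus J) < k$. This follows by induction on $k$, directly from the recursive definition of a supercone. For the inductive step, write $sh(J) \setminus J$ as a disjoint union of fibers over $sh(\pi(J))$. For $a \in sh(\pi(J)) \setminus \pi(J)$, the fiber is the full interval $(h_1(a), h_2(a))$, of dimension $1$. For $a \in \pi(J)$, the fiber is $(h_1(a), h_2(a)) \setminus J_a$, which is small by the co-smallness clause of Definition \ref{def-supercone}, hence of dimension $0$. Applying \textbf{(D3)} (together with the inductive hypothesis $\dim(sh(\pi(J)) \setminus \pi(J)) < k-1$) gives $\dim(sh(J) \setminus J) < k$. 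Since $sh(J)$ is an open cell in $M^k$, \textbf{(D5)} gives $\dim sh(J) = k$, and then \textbf{(D2)} yields $\dim J = k$; by injectivity of $f$ and \textbf{(D4)}, $\dim f(J) = k$ as well.

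Next, set $Y = f(sh(J))$. Since $f$ is $\cal L$-definable, continuous, and injective on the open cell $sh(J)$, the image $Y$ is an $\cal L$-definable subset of $M^n$ of dimension $k$. By the observation just after the definition of full (every $\cal L$-definable set is full, via \textbf{(D5)}), we have $\dim(cl(Y) \setminus Y) < k$. Now $f(J) \subseteq Y$, so
\[
cl(f(J)) \setminus f(J) \;\subseteq\; cl(Y) \setminus f(J) \;=\; (cl(Y) \setminus Y) \cup (Y \setminus f(J)).
\]
By injectivity of $f$ on $sh(J)$, $Y \setminus f(J) = f(sh(J) \setminus J)$, and another application of \textbf{(D4)} together with Step~1 gives $\dim(Y \setminus f(J)) = \dim(sh(J) \setminus J) < k$. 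Combining with $\dim(cl(Y) \setminus Y) < k$ via \textbf{(D2)} yields $\dim(cl(f(J)) \setminus f(J)) < k = \dim f(J)$, so $f(J)$ is full.

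The only substantive step is the inductive computation $\dim(sh(J) \setminus J) < k$; once that is in place, the rest is bookkeeping with $\textbf{(D2)}$, $\textbf{(D4)}$, and $\textbf{(D5)}$. I do not anticipate any genuine obstacle, since continuity of $f$ plays no real role beyond ensuring that $f(sh(J))$ is a well-behaved $\cal L$-definable set whose closure is controlled by o-minimal dimension.
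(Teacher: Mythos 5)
Your proof is correct and follows essentially the same route as the paper: decompose $cl(f(J))\setminus f(J)$ into the boundary piece $cl(f(sh(J)))\setminus f(sh(J))$ (controlled because $f(sh(J))$ is $\cal L$-definable) and the piece $f(sh(J))\setminus f(J)$ (controlled via injectivity and \textbf{(D4)}). The only difference is that you prove the auxiliary estimate $\dim(sh(J)\setminus J)<k$ directly by induction on $k$ from Definition \ref{def-supercone}, whereas the paper obtains the equivalent bound by citing \cite[Corollary 4.28]{egh} for the fullness of $J$ and then using $sh(J)\setminus J\subseteq cl(J)\setminus J$.
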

\begin{proof}
We first note that $J$ is a full set, by \cite[Corollary 4.28]{egh}.  Now,  let $V=sh(J)$. Observe that
$$cl(f(J))\sm f(J)\sub cl(f(V))\sm f(J)=(cl(f(V))\sm f(V))\cup (f(V)\sm f(J)).$$
Since $f(V)$ is $\cal L$-definable, the first part of the last union has dimension $<k$. Since also $f$ is continuous and injective,  the set
$$f(V)\sm f(J) \sub f(V\sm J)$$
also has dimension $\dim (V\sm J)\le \dim (cl(J)\sm J)<k$, as needed.
\end{proof}





\section{Local $\cal L$-definability}\label{sec-localLdef}

This section contains a key result (Corollary \ref{coordinate inj3}) which will be used in the proof of Lemma \ref{F(V)} below in order to extract an $\cal L$-definable set from some given data. At first, one  recovers only a `locally \cal L-definable' set, which we prove that it is in fact \cal L-definable (Lemma \ref{Ldef}).

\subsection{Preliminaries on local $\cal L$-definability}

\begin{defn}
Let $V\sub M^n$ be a definable set, and $x\in V$. We call $V$ \emph{locally $\cal L$-definable at $x$} if there is an open box $B\sub M^n$ containing $x$ such that $B\cap V$ is $\cal L$-definable. We call $V$ \emph{locally \cal L-definable} if it is locally \cal L-definable at every point.
\end{defn}


The following fact follows easily from the definition.

\begin{fact}\label{locLdeffact}
Suppose $V\sub M^n$ is a $k$-cell and $X\sub V$ a definable set. Then $X$ is locally $\cal L$-definable if and only if  for every $x\in X$, there is a $k$-cell $B\sub X$ containing $x$.
\end{fact}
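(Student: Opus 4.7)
The plan is to prove each direction separately by exploiting the o-minimal cell structure of the ambient $k$-cell $V$.

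For the direction $(\Leftarrow)$, fix $x \in X$ and let $B$ be a $k$-cell with $x \in B \sub X$. Both $B$ and $V$ are $k$-cells with $B \sub V$; by standard o-minimal cell theory, a cell of dimension equal to the ambient cell's dimension is relatively open in it, so $B$ is open in $V$ under the subspace topology. I can therefore choose an open box $B' \sub M^n$ containing $x$ with $B' \cap V \sub B$. Combining $X \sub V$ with $B \sub X$ gives $B' \cap X = B' \cap V$, which is $\cal L$-definable as the intersection of two $\cal L$-definable sets. Hence $X$ is locally $\cal L$-definable at $x$.

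For the direction $(\Rightarrow)$, fix $x \in X$ and choose an open box $B' \sub M^n$ containing $x$ with $B' \cap X$ being $\cal L$-definable. Since both $B' \cap V$ and $B' \cap X$ are $\cal L$-definable, I apply o-minimal cell decomposition to $B' \cap V$ partitioning $B' \cap X$: this yields a finite decomposition of $B' \cap V$ into cells, each either contained in $X$ or disjoint from $X$. Since $B' \cap V$ is a relatively open subset of the $k$-cell $V$, it is a finite union of $k$-cells; let $C$ be the cell of the refinement containing $x$. Then $C \sub X$, and by shrinking $B'$ if necessary so that $x$ lies in the relative interior of a top-dimensional piece of $X \cap V$, I can ensure $C$ is a $k$-cell, giving the required $k$-cell in $X$ through $x$.

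The main obstacle is the concluding step of the forward direction: ensuring the cell $C$ through $x$ is genuinely $k$-dimensional rather than a lower-dimensional stratum. The subtlety is that $x$ could a priori lie on the boundary of a lower-dimensional part of $X$ inside $V$. The plan is to exploit that $B' \cap V$ is a union of $k$-cells and to argue, using the local $\cal L$-definability of $X$ at $x$ together with the ambient $k$-cell structure of $V$, that the refinement of $B' \cap V$ must place $x$ in a $k$-cell that sits inside $X$.
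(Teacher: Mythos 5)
Your proof of the $(\Leftarrow)$ direction --- which is the only direction the paper actually invokes, at the end of the proof of Corollary \ref{coordinate inj3} --- is correct. The key step, that a $k$-cell $B$ contained in a $k$-cell $V$ is relatively open in $V$, is a genuine o-minimal fact worth justifying: projecting $V$ homeomorphically onto an open subset of $M^k$ and composing with the inverse of the analogous projection-homeomorphism for $B$ gives a continuous definable injection between open subsets of $M^k$, whose image is open by the o-minimal open mapping theorem (the same \cite{johns} reference the paper uses in Lemma \ref{coordinate inj}). Once $B$ is open in $V$, the boxed intersection $B'\cap X=B'\cap V$ argument is exactly right.

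The $(\Rightarrow)$ direction cannot be repaired, because the implication is simply false as stated. A singleton $X=\{p\}$ with $p\in V$ and $k\geq 1$ is $\cal L$-definable, hence locally $\cal L$-definable, yet there is no $k$-cell $B$ with $p\in B\sub X$. Dimension hypotheses do not save it either: let $V=(0,1)^2$ and let $X$ be the complement in $V$ of the open diagonal segment $\{(t,t):t\in(1/4,3/4)\}$ together with the single point $(1/2,1/2)$. Then $X$ is $\cal L$-definable and $\dim(V\sm X)=1<2$, but no $2$-cell contained in $X$ can contain $(1/2,1/2)$, since every open neighbourhood of that point meets $V\sm X$. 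The ``shrinking $B'$'' manoeuvre you propose cannot work: $x$ is a fixed point, and shrinking the box does not move $x$ off a lower-dimensional stratum of $X$ that is accumulated upon by $V\sm X$. What you flagged as ``the main obstacle'' is in fact a counterexample, not a technical gap.

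Since the paper offers no proof (only ``follows easily from the definition'') and uses only the direction you call $(\Leftarrow)$, the statement should be read as a one-sided implication; the ``if and only if'' is an overstatement in the source, and the half you proved correctly is the half that matters for the rest of the paper.
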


Of course, an $\cal L$-definable set is locally $\cal L$-definable. We will also prove the converse.

\begin{lemma}\label{VcapC}
Suppose $V\sub M^n$ is locally $\cal L$-definable and $X\sub M^n$ is $\cal L$-definable. Then $V\cap X$ is locally $\cal L$-definable.
\end{lemma}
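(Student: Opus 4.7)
The plan is to unpack the definition of local $\cal L$-definability pointwise and then exploit the fact that the intersection of two $\cal L$-definable sets is $\cal L$-definable. Concretely, I want to show that for every $x \in V \cap X$, there is an open box $B \ni x$ such that $B \cap (V \cap X)$ is $\cal L$-definable. Since $V \cap X \subseteq V$, any witness to local $\cal L$-definability of $V$ at $x$ should give a witness for $V \cap X$ at $x$ after intersecting with $X$.

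First I would fix an arbitrary point $x \in V \cap X$. Since $V$ is locally $\cal L$-definable (and $x \in V$), the definition supplies an open box $B \subseteq M^n$ containing $x$ such that $B \cap V$ is $\cal L$-definable. Next I would rewrite
\[
B \cap (V \cap X) \;=\; (B \cap V) \cap X,
\]
and observe that the right-hand side is the intersection of the $\cal L$-definable set $B \cap V$ with the $\cal L$-definable set $X$, hence $\cal L$-definable. This witnesses local $\cal L$-definability of $V \cap X$ at $x$, and since $x \in V \cap X$ was arbitrary, the claim follows.

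There is essentially no obstacle here: the proof is a direct verification from the definition, relying only on the boolean closure of the family of $\cal L$-definable sets under finite intersections. The only point worth noting is that the witness box $B$ for $V \cap X$ at $x$ can be taken to be the same box $B$ that witnesses local $\cal L$-definability of $V$ at $x$; no shrinking or refinement of $B$ is needed because $X$ itself is globally $\cal L$-definable.
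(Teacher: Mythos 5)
Your proof is correct and coincides with the paper's argument: both fix $x\in V\cap X$, take a box $B\ni x$ with $B\cap V$ being $\cal L$-definable, and conclude that $B\cap(V\cap X)=(B\cap V)\cap X$ is $\cal L$-definable. Your write-up is merely a more explicit rendering of the one-line proof in the paper.
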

\begin{proof}
If $B\sub M^n$ is an open box and $B\cap V$ is $\cal L$-definable, then so is $B\cap V\cap X$.
\end{proof}

\begin{lemma}\label{Ldef}
A locally $\cal L$-definable set is $\cal L$-definable.
\end{lemma}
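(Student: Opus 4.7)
The plan is to decompose $cl(V)$ into $\cal L$-definable cells and then show that $V\cap C$ is $\cal L$-definable for each such cell $C$.

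First, I would argue that $cl(V)$ itself is $\cal L$-definable: the topological closure of a definable set is definable (expressible by a first-order formula in the order on $M$), so $M^n\sm cl(V)$ is an open definable set, hence $\cal L$-definable by the standing assumption that every open definable set is $\cal L$-definable. Applying o-minimal cell decomposition in $\cal M$, write $cl(V)=C_1\sqcup\cdots\sqcup C_m$ with each $C_i$ an $\cal L$-definable cell. Since $V\sub cl(V)$, we have $V=\bigsqcup_i (V\cap C_i)$, and it suffices to show that each $V\cap C_i$ is $\cal L$-definable.

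Fix $C=C_i$, a $k$-cell, and let $\pi:M^n\to M^k$ be an $\cal L$-definable coordinate projection restricting to a homeomorphism from $C$ onto the open cell $\pi(C)\sub M^k$. By Lemma \ref{VcapC}, the set $V\cap C$ is locally $\cal L$-definable, and since $V\cap C\sub C$ sits inside a $k$-cell, Fact \ref{locLdeffact} supplies, for each $x\in V\cap C$, a $k$-cell $B_x$ with $x\in B_x\sub V\cap C$. A standard o-minimal argument (essentially invariance of domain, or a cell decomposition refining both $B_x$ and $C$) shows that any $k$-cell contained in another $k$-cell is relatively open, so $B_x$ is open in $C$ and therefore $\pi(B_x)$ is open in $\pi(C)$, hence in $M^k$. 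Consequently $\pi(V\cap C)=\bigcup_{x\in V\cap C}\pi(B_x)$ is an open definable subset of $M^k$, hence $\cal L$-definable by the same standing assumption. Pulling back, $V\cap C=C\cap\pi^{-1}(\pi(V\cap C))$ is $\cal L$-definable, and so is the finite disjoint union $V=\bigsqcup_i (V\cap C_i)$.

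The only delicate ingredient is the openness of a $k$-subcell inside an ambient $k$-cell, which is an instance of invariance of domain for definable maps in the o-minimal setting. This is a standard fact and I do not expect it to be a genuine obstacle; the rest of the argument is essentially a transport of ``locally open $\Rightarrow$ open'' through the $\cal L$-definable homeomorphism $\pi|_C$, combined with hypothesis~(a).
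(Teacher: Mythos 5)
Your reduction to the cells of $cl(V)$, and your observation that a $k$-cell inside a $k$-cell is relatively open (via the o-minimal open mapping theorem), are both fine. The gap is the step that produces the $k$-cells $B_x$: you invoke the forward direction of Fact~\ref{locLdeffact}, and that direction is simply false. Already in the trivial expansion $\cal N=\cal M=\la\R,<,+,\cdot\ra$, take $V=(0,1)^2\sm\left((1/2,1)\times\{1/2\}\right)$. This $V$ is $\cal L$-definable, hence locally $\cal L$-definable; $cl(V)=[0,1]^2$; the standard cell decomposition of $[0,1]^2$ has $C=(0,1)^2$ as its unique $2$-cell; and $V\cap C=V$. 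But the point $(1/2,1/2)\in V\cap C$ lies on the relative frontier of $V$ inside $C$, so no $2$-cell $B$ with $(1/2,1/2)\in B\sub V\cap C$ exists: as you yourself argue, any such $B$ would be open in $C$, contradicting that $V$ is not open at $(1/2,1/2)$. The underlying problem is that the cell decomposition of $cl(V)$ is chosen before one knows $V$ to be $\cal L$-definable, so nothing rules out points of $V$ sitting on the frontier of $V$ inside a $k$-cell of $cl(V)$; at such points the set $\bigcup_x\pi(B_x)$ you form omits $\pi(x)$, and the identity $\pi(V\cap C)=\bigcup_x\pi(B_x)$ fails. (The paper itself only ever uses Fact~\ref{locLdeffact} in the reverse direction, in Corollary~\ref{coordinate inj3}, so its development is unaffected.)

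The paper's own proof of Lemma~\ref{Ldef} is designed precisely to dispose of these frontier points. It inducts on $k=\dim cl(V)$: writing $C=cl(V)$, it first shows $\dim cl(C\sm V)<k$ via a density/co-density contradiction (an $\cal L$-definable subset of a $k$-cell cannot be both dense and co-dense). The ``frontier part'' $V\cap cl(C\sm V)$ is again locally $\cal L$-definable by Lemma~\ref{VcapC} and has closure of dimension $<k$, hence is $\cal L$-definable by the inductive hypothesis; the remaining ``interior part'' $V\sm cl(C\sm V)$ equals $C\sm cl(C\sm V)$ and is $\cal L$-definable outright. To repair your argument you would need a comparable device for the frontier points before the open mapping step can be applied.
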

\begin{proof}
Let $V\sub M^n$ be locally $\cal L$-definable and suppose that the closure $C=cl(V)$ has dimension  $k$. We work by induction on $k$. If  $k=0$, then $V$ is finite and hence $\cal L$-definable. Suppose $k>0$. We first prove that
$$\dim cl(C\sm V)<k.$$
If not, there is a $k$-cell $C'\sub C$ in which $C\sm V$ is dense. Since $V$ is dense in $C$, by Fact \ref{denseZ0} it is also dense in $C'$. In particular, there is $x\in C'\cap V$. Since $C'\cap V$ is locally $\cal L$-definable (Lemma \ref{VcapC}), there is a $k$-cell $B\sub C'$ containing $x$ such that $B\cap V$ is $\cal L$-definable. But then both $B\cap V$ and $B\cap (C\sm V)$ are dense in $B$. That is, $B\cap V$ and $B$ are both $\cal L$-definable and the former is dense and co-dense in the latter. A contradiction.

By Lemma \ref{VcapC}, the set $cl(C\sm V)\cap V$ is locally $\cal L$-definable. Since its closure is contained in $cl(C\sm V)$,  by inductive hypothesis we obtain that it is $\cal L$-definable. Since
$$V\sm cl(C\sm V)= C\sm cl(C\sm V)$$
is also $\cal L$-definable, we conclude that $V$ is $\cal L$-definable.
\end{proof}


Although it will not be used in this paper, we note that local $\cal L$-definability at a point is a definable notion.

\begin{lemma}\label{Sdefinable}
Let $V\sub M^n$ be a definable set. Then the set $S$ of points in $V$ at which $V$ is locally $\cal L$-definable is $\cal L$-definable. 
\end{lemma}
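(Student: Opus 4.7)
The target is to show that $S$ is $\cal L$-definable. My plan is to reduce the statement to Lemma \ref{Ldef} by establishing two points: (i) that $S$ is a definable set, and (ii) that $S$ is itself locally $\cal L$-definable. Once (i) and (ii) are in hand, Lemma \ref{Ldef} directly yields $\cal L$-definability.

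For (ii), the key is a self-referential observation. Given $x \in S$, fix an open box $B \ni x$ with $B \cap V$ $\cal L$-definable. Then for every $y \in B \cap V$, the \emph{very same} box $B$ witnesses that $V$ is locally $\cal L$-definable at $y$, so $y \in S$. Hence $B \cap V \sub S$, and since $S \sub V$ we obtain $B \cap S = B \cap V$, which is $\cal L$-definable. Thus $S$ is locally $\cal L$-definable at every $x \in S$ via the same witnessing box.

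Step (i) is the main obstacle. The defining clause of $S$ quantifies existentially over $\cal L$-definable sets: ``$\exists B \ni x$ with $B \cap V$ $\cal L$-definable'' is equivalent to ``$\exists$ $\cal L$-definable $Y$ with $x \notin cl(V \triangle Y)$'', giving the a priori infinite union
\[
S = V \cap \bigcup_{Y} \bigl( M^n \setminus cl(V \triangle Y) \bigr)
\]
over $\cal L$-definable $Y$, which is not obviously first-order. My plan to resolve this is to use axiom (D6) together with the cell decomposition of $cl(V)$. Applying (D6) to the characteristic function of $V$, cell by cell, yields a canonical single $\cal L$-definable approximation $Y_0$ with $\dim (V \triangle Y_0) < \dim cl(V)$. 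The ``good'' portion $V \setminus cl(V \triangle Y_0)$ coincides with $Y_0 \cap (M^n \setminus cl(V \triangle Y_0))$ and is therefore $\cal L$-definable; the remaining ``bad'' candidates lie in the definable set $V \cap cl(V \triangle Y_0)$, which has strictly smaller dimension (or can be reduced by passing to its lower-dimensional cells). An induction, with base case $\dim V = 0$, where $S$ is simply the $\cal L$-definable set of isolated points of $V$ (these being $V \cap (\text{isolated points of } cl(V))$, a definable subset of a finite $\cal L$-definable set), then handles the leftover piece. Combining (i) with (ii) and Lemma \ref{Ldef} concludes the proof.
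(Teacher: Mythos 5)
Your step (ii) --- that $S$ is locally $\cal L$-definable, because any box $B$ witnessing local $\cal L$-definability at $x$ also witnesses it at every point of $B\cap V$, so $B\cap S=B\cap V$ is $\cal L$-definable --- is correct and is exactly the observation the paper makes before invoking Lemma~\ref{Ldef}. The divergence is in step (i), where you and the paper take completely different routes, and yours has a genuine gap.

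The paper establishes that $S$ is definable in one stroke, by writing down a first-order criterion: $V$ is locally $\cal L$-definable at $x$ if and only if there is a closed box $B\ni x$ with $cl(B\cap V)=B\cap V$. This condition is directly expressible in $\cal N$ (quantify over the corners of $B$; closures of definable sets are definable), so $S$ is definable, and Lemma~\ref{Ldef} then finishes exactly as in your plan. Your replacement --- applying \textbf{(D6)} to a characteristic function and inducting on $\dim cl(V)$ --- does not terminate. You pass from $V$ to $V\cap W$ with $W=cl(V\triangle Y_0)$, expecting $\dim cl(V\cap W)<\dim cl(V)$. But $V\triangle Y_0$ is only $\cal N$-definable and need not be strongly large, so $\dim W$ can equal $\dim cl(V)$ even though $\dim(V\triangle Y_0)<\dim cl(V)$. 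Concretely, take $V=P\times M\sub M^2$ in any of the pairs from the introduction: here $\dim V=1$, $\dim cl(V)=2$, the only sensible choice is $Y_0=\es$, so $V\triangle Y_0=V$, $W=M^2$, and $V\cap W=V$ --- the recursion is stuck at the very first step. (No $\cal L$-definable $Y_0$ helps: if $\dim(V\triangle Y_0)<2$ then $\dim Y_0<2$, so $V\sm Y_0$ is still dense in $M^2$ and $\dim cl(V\triangle Y_0)=2$.) The parenthetical ``or can be reduced by passing to its lower-dimensional cells'' does not repair this, since the troublesome cells of $W$ are exactly the full-dimensional ones. You also use without stating it the reduction that, for $x\in V\cap W$, local $\cal L$-definability of $V$ at $x$ is equivalent to that of $V\cap W$ at $x$; this is true (write $B\cap V=(B\cap V\cap W)\cup(B\cap Y_0\sm W)$ and note the second piece is $\cal L$-definable), but granting it does not fix the termination failure, which is fatal to the induction.
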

\begin{proof} It is easy to see that for any point $x\in V$, we have that $V$ is locally $\cal L$-definable at $x$ if and only if there is a closed box $B\sub M^n$ containing $x$ such that $cl(B\cap V)=B\cap V$. 
Therefore,  $S$ is definable. By its definition, it is thus locally $\cal L$-definable. By Lemma \ref{Ldef}, it is $\cal L$-definable.
\end{proof}

 \subsection{Extracting local $\cal L$-definability}

A simple and illustrative example 
of what follows is this. Let $\Gamma$ be the set of non-algebraic real numbers, and $f:\R^2\to \R$  the usual addition. Then $f(J\times J)=\R$ is $\cal L$-definable. The statements that follow generalize this observation. The extra complication in proving Corollary \ref{coordinate inj3} below is due to the fact that the domain $U$ of the given $f$ is not a product of $\cal L$-definable sets, forcing us to first prove local $\cal L$-definability of $f(\Gamma^2 \cap U)$, with the assistance of the preceding lemmas.


\begin{lemma}\label{coordinate inj}
Let $S_1, S_2\sub M^n$ be two $k$-cells, and $\Gamma_i\sub S_i$ definable sets with $\ldim(S_i\sm \Gamma_i)<k$, for $i=1,2$. Suppose $f:S_1\times S_2\to M^k$ is an $\cal L$-definable continuous map,   injective in each coordinate, and let $x\in f(\Gamma_1\times \Gamma_2)$. Then $f(\Gamma_1\times \Gamma_2)$ contains an open subset of $M^k$ that contains $x$.
\end{lemma}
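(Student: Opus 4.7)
My plan is to produce, locally around $x$, an open neighborhood of $x$ that lies entirely in $f(\Gamma_1 \times \Gamma_2)$, by using coordinate-wise injectivity of $f$ to invoke o-minimal invariance of domain, followed by a dimension count to guarantee that each fiber meets $\Gamma_1 \times \Gamma_2$.

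First I would reduce to the case where $S_1, S_2$ are open cells in $M^k$, since every $k$-cell in $M^n$ is $\cal L$-definably homeomorphic to an open cell in $M^k$ via projection onto a distinguished $k$-tuple of coordinates, and the hypotheses of the lemma transfer along such homeomorphisms. Writing $x = f(a_0, b_0)$ with $a_0 \in \Gamma_1$, $b_0 \in \Gamma_2$, I would pick open boxes $W_1 \sub S_1$ and $W_2 \sub S_2$ around $a_0$ and $b_0$, and study the $\cal L$-definable continuous map
$$F : W_1 \times W_2 \to M^k \times W_2, \qquad F(a,b) = (f(a,b), b).$$
Injectivity of $f$ in the first coordinate makes $F$ injective, so o-minimal invariance of domain applied to $F$, viewed as a continuous $\cal L$-definable injection from an open subset of $M^{2k}$ into $M^{2k}$, shows that $F$ is an open map and a homeomorphism onto its image. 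Projecting $F(W_1 \times W_2)$ onto the first factor then produces an open neighborhood $E := f(W_1 \times W_2)$ of $x$ in $M^k$.

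To finish, I would verify $E \sub f(\Gamma_1 \times \Gamma_2)$. For any $y \in E$, openness of $F$ makes the slice $T_y := \{b \in W_2 : (y,b) \in F(W_1 \times W_2)\}$ a nonempty open subset of $W_2$, on which the map $\psi_y : T_y \to W_1$ sending $b$ to the unique $a \in W_1$ with $f(a,b) = y$ is well-defined, $\cal L$-definable, continuous, and injective (this last by injectivity of $f$ in the second coordinate). The undesirable $b \in T_y$ — those with $b \notin \Gamma_2$ or $\psi_y(b) \notin \Gamma_1$ — form the set $(T_y \sm \Gamma_2) \cup \psi_y^{-1}(S_1 \sm \Gamma_1)$. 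The first summand lies in $S_2 \sm \Gamma_2$ and so has dimension $<k$ by monotonicity; the second, by \textbf{(D4)} applied to the injection $\psi_y$, has dimension $\le \dim(S_1 \sm \Gamma_1) < k$. Since $T_y$ is open and hence $k$-dimensional, a good $b \in T_y \cap \Gamma_2$ with $\psi_y(b) \in \Gamma_1$ exists, and yields a pair $(a, b) \in \Gamma_1 \times \Gamma_2$ with $f(a, b) = y$.

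The decisive step I expect is the appeal to o-minimal invariance of domain, which upgrades the coordinate-wise injectivity of $f$ to openness of the auxiliary map $F$; granted that, everything else is dimension-theoretic bookkeeping that tracks how the co-$(<k)$-dimensionality of $\Gamma_i$ in $S_i$ propagates through the $\cal L$-definable injection $\psi_y$.
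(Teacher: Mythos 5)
Your proof is correct, but it is organized differently from the paper's. The paper first applies Johns' open mapping theorem to the single-variable slice $f(a,-):S_2\to M^k$ to get an open set $K\ni x$ contained in $f(a,S_2)$, then uses continuity of $f$ to uniformize this over a $k$-cell $I\ni a$, obtaining $K\subseteq\bigcap_{t\in I}f(t,S_2)$; finally it argues by contradiction that $K\subseteq f((I\cap\Gamma_1)\times\Gamma_2)$, using coordinate-wise injectivity and \textbf{(D4)} to show that a putative $m_0\in K\setminus f((I\cap\Gamma_1)\times\Gamma_2)$ would force the injective map $x\mapsto f(-,x)^{-1}(m_0)$ on $S_2\setminus\Gamma_2$ (dimension $<k$) to cover a set of dimension $k$. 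You instead package the joint data into the auxiliary map $F(a,b)=(f(a,b),b)$, apply Johns' theorem once on $M^{2k}$ to get openness of $F(W_1\times W_2)$ (hence of its projection $E=f(W_1\times W_2)$), and then give a direct fiber-dimension count: for each $y\in E$, the slice $T_y$ is a $k$-dimensional $\cal L$-definable set and the bad locus $(T_y\setminus\Gamma_2)\cup\psi_y^{-1}(S_1\setminus\Gamma_1)$ has dimension $<k$ by monotonicity and \textbf{(D4)} applied to the injection $\psi_y$. Both proofs rest on the same two pillars (Johns' theorem and \textbf{(D4)}), but your graph-map trick turns the paper's uniformize-then-contradiction argument into a single invariance-of-domain step followed by a positive dimension count; this is cleaner and slightly more informative, since it shows the entire open set $E=f(W_1\times W_2)$ (not just some open $K$) lies in $f(\Gamma_1\times\Gamma_2)$.
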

\begin{proof}
The proof is inspired by an example in \cite[page 5]{beg}. Let $x=f(a, b)$, where $(a, b)\in\Gamma_1\times \Gamma_2$. We first claim that there is a definable set $\Gamma'_1\sub \Gamma_1$ of  dimension $k$, such that $\bigcap_{t\in \Gamma'_1} f(t, S_2)$
contains an open set $K$ that contains $x$. Since $f(a, -):S_2\to M^k$ is $\cal L$-definable, continuous and injective, by \cite{johns} $f(a, S_2)$ contains an open set that contains $f(a, b)$. By continuity of $f$, there is a $k$-cell $I\sub S_1$ containing $a$, such that $\bigcap_{t\in I} f(t, S_2)$ contains an open set that contains $f(a, b)$. We can thus set $\Gamma'_1=I\cap \Gamma_1$, which has dimension $k$.

Now let $\Gamma'_1$ and $K$ be as above. We prove that actually $f(\Gamma'_1\times \Gamma_2)$ contains $K$. Assume towards a contradiction that there is $m_0\in K$ such that $m_0\not\in f(\Gamma'_1\times \Gamma_2)$. By the claim in the first paragraph,
$$m_0\in \bigcap_{t\in \Gamma'_1} f(t, S_2\sm \Gamma_2).$$
By injectivity of $f$ in the first coordinate, we obtain
$$\Gamma'_1\sub f(-, S_2\sm \Gamma_2)^{-1}(m_0)$$
which is a contradiction, because $\ldim(S_2\sm \Gamma_2)<k=\ldim (\Gamma'_1)$, and hence its image under the definable map $x\mapsto f(-, x)^{-1}(m_0)$ cannot contain $\Gamma'_1$ (see, for example, \cite[Corollary 5.3]{egh}). 
\end{proof}

We next derive a version of the last lemma where the range of $f$ is a $k$-cell in any $M^n$.


\begin{lemma}\label{coordinate inj2} Let $S_1, S_2, V\sub M^n$ be three $k$-cells, and $\Gamma_i\sub S_i$ definable sets with $\ldim (S_i\sm \Gamma_i)<k$, for $i=1, 2$. Suppose $f:S_1\times S_2\to V$ is an $\cal L$-definable continuous map, injective in each coordinate, and let $x\in f(\Gamma_1\times \Gamma_2)$. Then $f(\Gamma_1\times \Gamma_2)$ contains a $k$-cell that contains $x$.
\end{lemma}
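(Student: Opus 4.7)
The plan is to reduce this lemma to the previous one, Lemma \ref{coordinate inj}, via a suitable coordinate projection. Since $V \subseteq M^n$ is a $k$-cell, by the standard structure theorem for cells in o-minimal structures there is a coordinate projection $\pi: M^n \to M^k$ (onto some choice of $k$ coordinates) whose restriction $\pi|_V$ is an $\cal L$-definable homeomorphism from $V$ onto an open $k$-cell $\pi(V) \subseteq M^k$. In particular $\pi|_V$ is injective, and the $n-k$ omitted coordinates on $V$ are given by continuous $\cal L$-definable functions of the first $k$.

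Now I would consider the composition $g = \pi \circ f: S_1 \times S_2 \to M^k$. This map is clearly $\cal L$-definable and continuous. It is moreover injective in each coordinate: if $g(a, s) = g(a, s')$ for two values in the second argument, then $f(a, s), f(a, s') \in V$ have the same projection under $\pi|_V$, hence are equal, and injectivity of $f$ in the second coordinate forces $s = s'$; the first coordinate is symmetric. Thus Lemma \ref{coordinate inj} applies to $g$, and since $\pi(x) \in g(\Gamma_1 \times \Gamma_2)$, we obtain an open set $W \subseteq M^k$ with $\pi(x) \in W \subseteq g(\Gamma_1 \times \Gamma_2)$.

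I would then pull $W$ back via $\pi|_V$. The set $(\pi|_V)^{-1}(W)$ is a relatively open subset of $V$ containing $x$, and it is contained in $f(\Gamma_1 \times \Gamma_2)$: if $y = (\pi|_V)^{-1}(w)$ for some $w \in W$, then $w = g(s_1, s_2) = \pi(f(s_1, s_2))$ for some $(s_1, s_2) \in \Gamma_1 \times \Gamma_2$, and since $f(s_1, s_2) \in V$ and $\pi|_V$ is injective, $y = f(s_1, s_2)$. Finally, shrinking $W$ to an open box $B \subseteq W$ containing $\pi(x)$, the preimage $(\pi|_V)^{-1}(B)$ is a $k$-cell (it is the graph of the same continuous $\cal L$-definable functions defining $V$, restricted to $B$) contained in $f(\Gamma_1 \times \Gamma_2)$ and containing $x$.

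The only delicate point is the first sentence, the existence of a coordinate $k$-tuple onto which $V$ projects homeomorphically; but this is immediate from the recursive definition of cells, since the non-free coordinates of a $k$-cell are defined by continuous functions of the free ones. Everything else is formal manipulation using injectivity of $f$ in each coordinate together with injectivity of $\pi|_V$.
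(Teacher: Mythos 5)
Your argument is correct and is essentially the same proof as the paper's: choose a coordinate projection $\pi$ injective on $V$, apply Lemma \ref{coordinate inj} to $\pi\circ f$, and pull the resulting open box back through $\pi|_V$. The only difference is that you spell out the verification that $\pi\circ f$ is injective in each coordinate (using that the range of $f$ lies in $V$), a detail the paper leaves implicit.
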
 
\begin{proof} Suppose $x=f(a, b)$, for $(a, b)\in \Gamma_1\times \Gamma_2$.
Let $\pi:M^n\to M^k$ be  a coordinate projection which is injective on $V$. Then $F:=\pi\circ f:S_1\times S_2\to M^k$ is an $\cal L$-definable continuous map that satisfies the conditions of Lemma \ref{coordinate inj}. So $F(\Gamma_1\times \Gamma_2)$ contains an open box $K$ of $M^k$ that contains $\pi(x)$. Then $(\pi_{\res V})^{-1}(K)\sub f(\Gamma_1\times \Gamma_2)$ is a $k$-cell that contains $x$.\end{proof}


It is not hard to see that the above lemma remains true if $V$ is any $\cal L$-definable set of dimension $k$, but we will not need this fact here. However, if $V$ is of higher dimension,  then the lemma fails: let $f$ be the identity map and $\Gamma$ contain no open $\cal L$-definable set.\\

We can now prove the exact statement that will be used  in the proof of Lemma \ref{F(V)}.

\begin{cor}\label{coordinate inj3}  Let $V\sub M^n$ be a $k$-cell, $U\sub V^2$ a finite union of $2k$-cells, and $\Gamma\sub V$ a definable set with $\dim (V\sm \Gamma)<k$. Suppose that $f:U\to V$ is an $\cal L$-definable continuous map, which is injective in each coordinate. Then the set $X=f(\Gamma^2\cap U)$ is $\cal L$-definable.
\end{cor}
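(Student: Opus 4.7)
My plan is to reduce to local $\cal L$-definability and then invoke Lemma \ref{coordinate inj2} on a local product neighborhood. By Lemma \ref{Ldef} it suffices to show that $X = f(\Gamma^2 \cap U)$ is locally $\cal L$-definable, and since $X \sub V$ with $V$ a $k$-cell, Fact \ref{locLdeffact} further reduces the task to producing, through each $x \in X$, a $k$-cell contained in $X$.

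Fix such $x = f(a,b)$ with $(a,b) \in \Gamma^2 \cap U$ and pick a $2k$-cell $C$ with $(a,b) \in C \sub U$. The crucial preparatory step is to observe that $C$ is relatively open in $V^2$ near $(a,b)$: applying the \cal L-definable cell decomposition of $M^{2n}$ compatible with $V^2$ and $C$, the $2k$-cells of this decomposition lying in $V^2$ are precisely its top-dimensional cells and hence relatively open in $V^2$, and $C$ is one of them. Consequently, there are open sets $N_1, N_2 \sub V$ with $a \in N_1$, $b \in N_2$ and $N_1 \times N_2 \sub C$. Refining each $N_i$ by cell decomposition to a $k$-cell $S_i$ still containing the appropriate point, we obtain $k$-cells $S_1, S_2 \sub V$ with $a \in S_1$, $b \in S_2$, and $S_1 \times S_2 \sub U$.

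Now apply Lemma \ref{coordinate inj2} to the restriction of $f$ to $S_1 \times S_2$, with $\Gamma_i := \Gamma \cap S_i$. All hypotheses are verified: $S_1, S_2, V$ are $k$-cells; the restriction is $\cal L$-definable, continuous, and injective in each coordinate; $\dim(S_i \sm \Gamma_i) \le \dim(V \sm \Gamma) < k$ by monotonicity of $\dim$; and $x \in f(\Gamma_1 \times \Gamma_2)$ since $(a,b) \in \Gamma_1 \times \Gamma_2$. The lemma produces a $k$-cell $B$ with $x \in B \sub f(\Gamma_1 \times \Gamma_2) \sub X$, as required. The only subtle ingredient in this scheme is the local product extraction in the middle paragraph, which is precisely what elevates the corollary beyond the product-form hypothesis of Lemma \ref{coordinate inj2}; everything else is a direct assembly of results already in hand.
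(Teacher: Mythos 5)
Your overall approach matches the paper's exactly: reduce to local $\cal L$-definability via Lemma \ref{Ldef} and Fact \ref{locLdeffact}, extract a product neighborhood $S_1\times S_2\sub U$ of $(a,b)$, and invoke Lemma \ref{coordinate inj2}. The paper states the product-extraction step without proof (``it is easy to find $k$-cells $S_1, S_2$\ldots''), so your attempt to flesh it out is welcome.

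However, the justification you give in the middle paragraph does not work as written. A cell decomposition of $M^{2n}$ \emph{compatible with} $C$ partitions $C$ into cells; it does not have $C$ itself as a cell, so the claim ``$C$ is one of them'' is incorrect. Worse, after refining, the point $(a,b)$ may land in a cell of the decomposition of dimension $<2k$, in which case this argument yields no relatively open neighborhood of $(a,b)$ at all. The fact you actually need is that the $2k$-cell $C$, sitting inside the $2k$-dimensional $V^2$, is \emph{already} relatively open in $V^2$, with no refinement. One way to see this: since $C\sub V^2$, every ``graph'' (degenerate) coordinate of $V$ (doubled) is also a graph coordinate of $C$, and by counting ($2n-2k$ on each side) the degenerate coordinates coincide; hence the projection $\pi\times\pi$ which maps $V^2$ homeomorphically onto the open set $\pi(V)^2\sub M^{2k}$ also exhibits $C$ as a $2k$-cell over the same coordinates, so $(\pi\times\pi)(C)$ is open in $M^{2k}$ and $C$ is relatively open in $V^2$. (Alternatively, apply the open mapping theorem of \cite{johns} — already used in Lemma \ref{coordinate inj} — to $(\pi\times\pi)\circ(\rho\res_C)^{-1}$, where $\rho$ is the coordinate projection realizing $C$ as a cell.) Once this relative openness is in hand, the rest of your argument — extracting the box $N_1\times N_2$, shrinking to $k$-cells $S_1,S_2$, and applying Lemma \ref{coordinate inj2} — goes through as you wrote it.
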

\begin{proof}

By Lemma \ref{Ldef}, it suffices to show that   $X$ is locally $\cal L$-definable. So let $x=F(a, b)$, where $(a, b)\in \Gamma^2\cap U$. Since $U\sub V^2$ is a finite union of $2k$-cells and $\dim V=k$, it is easy to find $k$-cells $S_1, S_2\sub V\sub M^n$  such that $(a, b)\in S_1\times S_2\sub U$. Since $S_i\sub V$, we have $\ldim (S_i\sm\Gamma)<k,$ and hence by Lemma \ref{coordinate inj2}, for $\Gamma_i=\Gamma\cap S_i$, there is a $k$-cell $B$ with
$$x\in B\sub F((\Gamma\cap S_1)\times (\Gamma\cap S_2))\sub X.$$
By  Fact \ref{locLdeffact}, $X$ is locally $\cal L$-definable.
\end{proof}

\section{A Weil's group chunk theorem}\label{sec-chunk}

The goal of this section is to recover an $\cal L$-definable group from an $\cal L$-definable `group chunk'. Theorems of this spirit have already been considered in classical model theory. The current account borrows ideas from Weil's group chunk theorem as it appears in van den Dries \cite{vdd-weil}. The proof of Theorem \ref{chunk} is based on discussions with Y. Peterzil.

In this section, we work in a more general setting than in the rest of this paper. Let $\cal M$ and $\cal N$ be any two first-order structures, with $\cal N$ expanding \cal M. Assume that there is a map $\dim$ from the class of all definable sets in \cal N to $\{-\infty\}\cup \N$, such that the following properties from the introduction hold:
\begin{itemize}
\item \textbf{(D1), (D2), (D3b), (D4)}, and
\item if the family  $\{X_t\}_{t\in I}$  in \textbf{(D3)} is \cal L-definable, then so are the sets $I_d$ in \textbf{(D3a)}.
\end{itemize}
We refer to the second property above as \textbf{(Ldef)}. Note that we do not assume that \cal M is o-minimal, nor that it admits a dimension function. But even for an o-minimal $\cal M$, the current setting is much richer than in the rest of the paper. For example, it includes $d$-minimal structures (\cite{for}), such as $\cal N=\la \R, <, +, \cdot, 2^\Z\ra$, and also weakly o-minimal non-valuational structures (\cite{wen}), such as $\cal N=\la \Q, <, +, (0, \pi)\ra$.

\begin{remark}\label{rmk-sbd} An example where  \cal N satisfies the above properties, but not \textbf{(D3a)}, is that of a \emph{semi-bounded} o-minimal structure $\cal N=\la \cal M, P\ra$; namely, when \cal M is a linear o-minimal structure, $P$ is an o-minimal expansion of a real closed field defined on a bounded interval, and $\dim$ is the usual o-minimal dimension (\cite[Proposition 3.6]{pet-sbd}). Even though groups definable in semi-bounded o-minimal structures are already well-understood (\cite{ep-sel2}), the results of this section appear to be new also  in that setting.
\end{remark}

Under these assumptions, we  recover a group which is \emph{interpretable} in $\cal M$. This will be enough for our purposes in this paper, in view of Fact \ref{fact-epr} below. To avoid any ambiguities, let us recall the following definition from \cite{epr}.

\begin{defn}\label{def-int} Let $\cal R$ be any structure. By a \emph{definable quotient} (in \cal R) we mean a quotient $X/E$ of a definable set $X$ by a definable equivalence relation $E$.
A map $f:X/E_1\to Y/E_2$ between two definable quotients is called \emph{definable} if the set
$$\{(x,y)\in X\times Y: f([x])=[y]\}$$ is definable (in \cal R). An \emph{interpretable group} $G$ is a group whose universe is a definable quotient, and whose group operation is a definable map.
\end{defn}

\begin{fact}[{\cite[Theorem 1]{epr}}]\label{fact-epr}
If $\cal R$ is o-minimal, then every interpretable group is definably isomorphic to a definable group.
\end{fact}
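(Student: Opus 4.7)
The plan is to reduce to finding a definable set of representatives for the defining equivalence relation and then transferring the group structure. Write $G = X/E$ with group operation $\mu$, where $X\sub R^m$ is definable and $E$ is a definable equivalence relation on $X$. The first move is to use o-minimal cell decomposition to stratify $X$ according to the local dimension of the $E$-classes, and then pass to a definable $X_0\sub X$ of full dimension on which all $E$-classes have a common dimension $d$. Inside $X_0$, by fibering over appropriate coordinate projections, I would construct a definable ``transversal'' $Y_0\sub X_0$ of codimension $d$ meeting each $E$-class of $X_0$ in a uniformly finite set; after shrinking via uniform finiteness this intersection has constant size $k$.

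Second, I would exploit the group structure to control the residual finite equivalence relation $E\cap (Y_0\times Y_0)$. The operation $\mu$ lifts to a correspondence $Y_0\times Y_0\rightrightarrows Y_0$ of degree at most $k^2$, and by restricting to a generic definable open subset and enforcing compatibility with associativity and inverses one arranges a definable $k$-to-$1$ map $\pi:Y_0\to G$ lifted from the quotient. Translating $Y_0$ by generic group elements then yields definable open ``charts'' covering $G$, each in definable bijection with a piece of $Y_0$; the changes of coordinates between overlapping charts are definable via $\mu$.

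Third, I would invoke a Weil-style group chunk theorem in the o-minimal category --- essentially a variant of the one developed in Section \ref{sec-chunk}, but carried out purely in $\cal R$ --- to glue the charts and the partial operation into a bona fide definable group $H\sub R^N$, and simultaneously produce the definable isomorphism $H\to G$ from the chart maps. The main obstacle is the second step: constructing the definable finite-to-one parametrization $Y_0\to G$ without appealing to definable choice, which is not available in an arbitrary o-minimal $\cal R$. Here the group structure is essential, as multiplication propagates local sections to global ones and identifies residual finite ambiguities up to a definable quotient by a finite group action, which can then be absorbed into the construction of $H$.
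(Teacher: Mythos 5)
This statement is a cited \emph{fact} ({\cite[Theorem 1]{epr}}); the present paper gives no proof of it, so there is no internal argument to compare against, and the comparison must be with the strategy of the cited reference, which is a substantial standalone paper. Your three-paragraph sketch is far from a proof, and it has at least one structural flaw that would need to be repaired before the other gaps could even be assessed.

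The most serious problem is circularity. You propose to finish by invoking ``a variant of the group chunk theorem of Section~\ref{sec-chunk}, carried out purely in $\cal R$.'' But look at how Theorem~\ref{chunk} is stated and proved: its unconditional conclusion is only that $G$ is definably isomorphic to an $\cal L$-\emph{interpretable} group, and the upgrade to an $\cal L$-\emph{definable} group in the ``moreover'' clause is obtained precisely by applying Fact~\ref{fact-epr}. In the single-structure setting $\cal M=\cal N=\cal R$ that you need, the unconditional conclusion is vacuous (you started with an interpretable group), and the nonvacuous conclusion is exactly the statement you are trying to prove, derived by citing it. So the group chunk theorem of this paper cannot be used; you would need an independent group chunk result whose conclusion is a \emph{definable} group outright, e.g.\ something in the spirit of van den Dries' topological group chunk theorem \cite{vdd-weil}, and you would then have to verify its hypotheses rather than wave at them.

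Beyond that, two intermediate steps are asserted without argument. First, the existence of a definable ``transversal'' $Y_0$ of codimension $d$ meeting every $E$-class of $X_0$ in a uniformly finite, nonempty set is exactly the kind of statement that fails without care: a generic cell of complementary dimension will miss many classes, and making it hit \emph{every} class finitely requires either iterating over strata or exploiting the group action, neither of which you spell out. Second, you say the residual finite equivalence relation on $Y_0$ ``can then be absorbed'' using the group structure, and you flag this yourself as the main obstacle, but you do not say how. The relevant mechanism in an o-minimal structure is that a definable \emph{finite} equivalence relation can be eliminated without any definable choice, by coding a $k$-element subset of $R^n$ as the lexicographically sorted $kn$-tuple; this is an elementary but essential point that your sketch omits, and it is what actually makes a finite-to-one parametrization usable. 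Until the circularity is removed and these two steps are made precise, the proposal is a plan rather than a proof.
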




We extend our terminology from the introduction to definable quotients:   a quotient, map and group as in Definition \ref{def-int}, is called `definable' or `interpretable' if $\cal R=\cal N$, and `$\cal L$-definable' or `\cal L-interpretable' if $\cal R=\cal M$.


\begin{thm}\label{chunk}
Let $G=\la G, \cdot, 1\ra$ be a definable group with $G\sub M^n$ and $\dim G=k$. Suppose that
\begin{itemize}
\item $X\sub M^m$ and $Z\sub X^2$ are two $\cal L$-definable sets,  with $\dim (X^2\sm Z)<2k$,
\item $h:X\to G$ is a definable injective map, with $\dim(G\sm h(X))<k$, and
\item $F:X^2\to X$ is an $\cal L$-definable map, such that for every $(x,y)\in Z$,
$$h(x) \cdot h(y)=h(F(x,y)).$$
\end{itemize}
Then $G$ is definably isomorphic to an $\cal L$-interpretable group.
If, moreover, \cal M is o-minimal, then $G$ is definably isomorphic to an $\cal L$-definable group.

\end{thm}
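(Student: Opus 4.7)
My plan is to realize $G$ as an \cal L-interpretable quotient of an \cal L-definable subset of $X^2$, parametrizing elements of $G$ via the map $(x, y) \mapsto h(x)\cdot h(y)$ and using $F$ to render equality of such products \cal L-definable. By \textbf{(D4)} applied to $h$ and \textbf{(D2)}, $\dim X = k$; and since $\dim(G \setminus h(X)) < k$, the map $\phi: X^2 \to G$, $\phi(x, y) = h(x)\cdot h(y)$, is surjective, because for any $g\in G$ the set $\{a\in h(X): a^{-1}g\in h(X)\}$ has dimension $k$ and so meets $h(X)$.

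First I would introduce on $X^2$ the relation
$$(x, y) \, E \, (u, v) \iff \dim\{z \in X : F(x, F(y, z)) \neq F(u, F(v, z))\} < k,$$
which is \cal L-definable by \textbf{(Ldef)} and is an equivalence relation by \textbf{(D2)} (reflexivity is trivial and transitivity follows since the defect set for the composite pair is contained in the union of the two small defect sets). By \textbf{(Ldef)} together with the fiber-dimension consequence of \textbf{(D3b)} applied to $X^2 \setminus Z$, the \cal L-definable set
$$B = \{a\in X : \dim\{z:(a,z)\notin Z\} \geq k\} \cup \{a\in X : \dim\{z:(z,a)\notin Z\}\geq k\}$$
has $\dim B < k$. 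Set $Y_0 = (X\setminus B)^2$; then $\dim(X^2 \setminus Y_0) < 2k$.

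The heart of the argument---and the main obstacle---is to show that $E$ agrees with $\phi$-equality on $Y_0$: for $(x,y),(u,v)\in Y_0$, $(x,y)\, E\,(u,v) \iff \phi(x, y) = \phi(u, v)$. For such pairs and for $z\in X$ in the ``good'' set where $(y,z), (v,z), (x, F(y, z)), (u, F(v, z))$ all lie in $Z$, the defining property of $F$ gives
$$F(x, F(y, z)) = h^{-1}(\phi(x,y)\cdot h(z)) \quad\text{and}\quad F(u, F(v, z)) = h^{-1}(\phi(u,v)\cdot h(z)),$$
from which both directions of the claim follow (using injectivity of $h$ and the fact that $\dim X = k$ makes a dimension-$k$ subset of $X$ nonempty). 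The delicate step is showing this good set has complement of dimension $<k$ in $X$: controlling $\{z : (x, F(y, z)) \notin Z\}$ requires that $F(y, \cdot)$ restricted to $\{z:(y,z)\in Z\}$ agrees with $h^{-1}\circ (h(y)\cdot)\circ h$, hence is injective, so that by \textbf{(D4)} the preimage of $\{w : (x, w) \notin Z\}$ (of dimension $<k$ since $x\notin B$) again has dimension $<k$. This yields a definable bijection $\bar\phi : Y_0/E \to G$.

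It remains to transport the group operation. I would show that the \cal L-definable ternary relation
$$\text{Mult}(a,b,c) \iff \dim\{z : F(a_1, F(a_2, F(b_1, F(b_2, z)))) \neq F(c_1, F(c_2, z))\} < k$$
on $Y_0^3$ corresponds, by iterating the same identification, to $\phi(a)\cdot\phi(b) = \phi(c)$: both $F$-expressions evaluate on generic $z$ to $h^{-1}(\phi(a)\phi(b)h(z))$ and $h^{-1}(\phi(c)h(z))$ respectively. Hence Mult is the graph of a group operation on $Y_0/E$ for which $\bar\phi$ is an isomorphism, realizing $G$ as an \cal L-interpretable group. When \cal M is o-minimal, Fact \ref{fact-epr} upgrades this to a definable isomorphism with an \cal L-definable group, completing the proof.
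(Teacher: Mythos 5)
Your proposal is correct and follows essentially the same route as the paper's proof: both form a quotient of (a large $\cal L$-definable piece of) $X^2$ and invoke \textbf{(Ldef)} applied to co-dimension conditions on iterated $F$-expressions to render the equivalence relation and the graph of the group operation $\cal L$-definable, concluding with Fact \ref{fact-epr}. The differences are presentational --- the paper first normalizes to $h=id$ and $X\sub G$ via a disjoint-union trick and encodes the operation through a splitting argument (its Claims 1 and 2), whereas you keep $h$ explicit and define the operation's graph in a single nested $F$-condition --- and there is one small, easily filled gap: surjectivity of $\phi$ must be re-checked on $Y_0=(X\sm B)^2$, which holds because $\dim B<k$ gives $\dim\bigl(G\sm h(X\sm B)\bigr)<k$ by \textbf{(D2)} and \textbf{(D4)}.
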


\begin{proof} We may assume that $X\sub G$ and $h=id$. Indeed, one can form the disjoint union  of $X$ and $G\sm h(X)$, and induce on it a definable group structure  after identifying $X$ with $h(X)$, and $G\sm h(X)$ with itself. We then have $\dim(G\sm X)<k$, $\dim (X^2\sm Z)<2k$, and   $F: X^2\to X$ is an $\cal L$-definable map,   such that for every $(x,y)\in Z$,
$$x\cdot y=F(x,y).$$
To simplify the notation, for $a,b\in G$, we  may write $ab$ for $a\cdot b$. Moreover, we may assume that for every $a\in X$, $\dim(X\sm Z_a)<k$.
Indeed, by \textbf{(Ldef)}, the set $X'=\{a\in X : \dim (X\sm Z_a)<k\}$ is \cal L-definable. Moreover, since $\dim(X^2\sm Z)<2k$, it follows that $\dim (X\sm X')<k$. We may thus replace $X$ by $X'$. Finally, note  that $F_{\res Z}$ is injective in each coordinate.\\

\noindent\textbf{Claim 1.} {\em The following sets are $\cal L$-definable:}\smallskip

$A=\{(a,b,c,d)\in X^4: ab=cd\}$,

  $B=\{(a, b)\in X^2 : ab=1\}$.

\begin{proof}[Proof of Claim 1]
For $A$, it suffices by \textbf{(Ldef)} to prove that for every $a,b,c,d\in X$, $ab=cd$ if and only if the set
$$\{x\in X: F(a, F(b, x))= F(c, F(d, x))\}$$
has co-dimension $<k$ in $X$. To see this, it suffices to show that for every $a, b\in X$, the set
$$\{x\in X : a b x=F(a, F(b,x))\}$$
has co-dimension $<k$ in $X$. Clearly, every $x\in Z_b$ such that $F(b,x)\in Z_a$ is contained in the last set. But by injectivity of $F_{\res Z}$ in the second coordinate, there are co-dimension $<k$ many such $x$.

 For  $B$, one can see similarly that for every $a,b\in X$, $(a,b)\in B$ if and only if the set
$$\{x\in X : F(a, F(b, x))= x\}$$
has co-dimension $<k$ in $X$.
\end{proof}

Let $\sim$ be the following equivalence relation on $X^2$:
$$(a, b) \sim (c,d) \,\,\,\Lrarr\,\,\, ab=cd.$$
By  definability of the set $A$ from Claim 1, the relation $\sim$ is $\cal L$-definable. Let $K=X^2/\sim$ and denote by $[(a,b)]$ the equivalence class of $(a,b)$. So $K$ is an $\cal L$-definable quotient.
We  aim to equip $K$ with an $\cal L$-interpretable group structure $\la K, *, 1_K\ra$.\\ 

\noindent\textbf{Claim 2.}
(1)  {\em For every $a,b,c,d\in X$, there are $e, x, y, f\in X$, such that $ab=ex$, $cd=yf$ and $xy=1$.}

 (2)  {\em For every $a,b, c,d,s,t\in X$, there are $e,x,y,z,w,f\in X$, such that $ab=ex$, $cd=yz$, $st=wf$ and $xy=zw=1$.}
\begin{proof}[Proof of Claim 2] We only prove (1), as the proof for (2) is similar.
Consider the sets
$$S=\{(e,x)\in X^2 : ab=ex\}$$
and
$$T=\{(y,f)\in X^2: cd=yf\}.$$
Since $\dim(G\sm X)<k$, the projections $\pi_1(T)$ and $\pi_2(S)$ on the first and last $m$ coordinates, respectively, have co-dimension $<k$ in $X$. In particular,
$$\pi_2(S)\cap X\cap (\pi_1(T)\cap X)^{-1}\ne \es.$$
Now take $x$ in this set and let $y=x^{-1}$, $e=abx^{-1}$ and $f=xcd$. By construction, $x, y, e, f\in X$ and they satisfy the equalities of the conclusion.
\end{proof}

Now, for every $a,b,c,d, e,f\in X$, define the relation
$$R(a,b,c,d,e,f)\,\,\Lrarr \, \text{there are $x,y\in X$ such that $ab=ex$, $cd=yf$ and $xy=1$.}$$
By Claim 1,  $R$ is an $\cal L$-definable relation. By Claim 2(1), for every $a,b,c,d\in X$, there are $e,f\in X$ such that $R(a,b,c,d,e,f)$. Moreover, if $R(a,b,c,d,e,f)$, $R(a',b',c',d',e',f')$, $ab=a'b'$ and $cd=c'd'$, then $ef=e'f'$. Indeed, let $x,y, x',y'$ witnessing the first two relations. Then
$$ef=exyf=abcd=a'b'c'd'=e'x'y'f'=e'f'.$$
We can thus define the following $\cal L$-definable operation on $K$:
$$[(a,b)]*[(c,d)]=[(e,f)]\,\,\Lrarr\,\, R(a,b,x,d,e,f).$$
Let $1_K=[(x,y)]$ for some/any $x,y\in X$ such that $xy=1$. Namely, take $x\in X\cap X^{-1}$, which exists since $\dim (G\sm X)<k$.

$ $\\
\noindent\textbf{Claim 3.} {\em $K$ is an $\cal L$-definable group.} 
\begin{proof}[Proof of Claim 3]
We already saw that $K$ and $*$ are $\cal L$-definable. We prove associativity of $*$. Let $a,b,c,d,e,f\in X$. Take  $e,x,y,z,w,f$ as in Claim 2(2). Then
\begin{align}
  ([(a,b)]*[(c,d)]) *[(s,t)]=[(e,z)]*[(w,f)]=[(e,f)] &=[(e,x)]*[(y,f)]   \notag\\
  &= [(a,b)]*([(c,d)] *[(s,t)]).\notag
\end{align}
It is also easy to check that $1_K$ is the identity element, using Claim 2(1).
 \end{proof}

\noindent\textbf{Claim 4.} {\em $K$ is definably isomorphic to $G$.} 
\begin{proof}[Proof of Claim 3]
Let $f:K\to G$ be given by $[(a,b)]\mapsto ab$. By definition of $\sim$, $f$ is injective. It is also onto since $\dim(G\sm X)<k$ and hence for every $x\in G$, we can choose $a\in X\cap x X^{-1}$ and $b=a^{-1}x$. It remains to see that $f$ is a group homomorphism. Let $a,b,c,d\in X$, and take $e,x,y,f\in X$ as in Claim 2(1). We then have
$$f( [(a,b)]*[(c,d)])= f([(e,f)])=ef= exyf=abcd = f([(a,b)]) \cdot  f([(c,d)]),$$
as required.
\end{proof}
The `moreover' clause is clear by the definitions and Fact \ref{fact-epr}.
\end{proof}

\section{The proof of Theorem \ref{main}.}\label{sec-proof}

We are now ready to prove Theorem \ref{main}.   The left-to-right direction is immediate, since every $\cal L$-definable group is strongly large. For the right-to-left direction, we prove that any strongly large group satisfies the assumptions of Theorem \ref{chunk}.

\begin{thm}\label{main_chunk} Let $G=\la G, \cdot, 1\ra$ be a strongly large group with $G\sub M^n$ and $\dim G=k$. Then there are
\begin{itemize}
\item \cal L-definable sets $X\sub M^m$ and $Z\sub X^2$,  with $\dim (X^2\sm Z)<2k$,
\item  a definable injective map $h:X\to G$, with $\dim(G\sm h(X))<k$, and
\item   an $\cal L$-definable map $F:X^2\to X$, such that for every $(x,y)\in Z$,
$$h(x) \cdot h(y)=h(F(x,y)).$$
\end{itemize}
\end{thm}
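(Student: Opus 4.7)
My plan is to follow Steps (I)--(V) of the roadmap in the introduction, producing the quadruple $(X, Z, F, h)$ to which Theorem \ref{chunk} will apply. First, by Corollary \ref{slfull}, write $G = G_0 \cup S$ with $G_0$ full of dimension $k$ and $\dim S < k$, and let $V$ be an $\cal L$-definable union of $k$-cells with $\dim(V \triangle G) < k$ (e.g., the top-dimensional cells in a cell decomposition of $cl(G_0)$). Since $G^2$ is strongly large of dimension $2k$, Lemma \ref{slF} applied coordinate-wise to $\cdot : G^2 \to G \subseteq M^n$ yields an $\cal L$-definable $F : V^2 \to M^n$ agreeing with $\cdot$ on a definable $C \subseteq (V \cap G)^2$ with $\dim(V^2 \setminus C) < 2k$. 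The injectivity and associativity of $\cdot$ transfer, via Lemma \ref{inj-lemma} applied uniformly and Fubini-ized through \textbf{(D3)} and \textbf{(Ldef)}, to an $\cal L$-definable $U \subseteq V^2$, a finite union of $2k$-cells with $\dim(V^2 \setminus U) < 2k$, on which $F$ is continuous, injective in each coordinate, and $F(U) \subseteq V$; associativity $F(F(a,b),c) = F(a, F(b,c))$ likewise holds on an $\cal L$-definable $U_3 \subseteq V^3$ of co-dimension $<3k$.

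Setting $\Gamma := V \cap G$, Corollary \ref{coordinate inj3} gives that $X := F(\Gamma^2 \cap U) \subseteq V$ is $\cal L$-definable, with $\dim(V \setminus X) < k$. I then define $h : X \to G$ by descending group multiplication along $F$: for $t = F(a, b)$ with $(a, b) \in \Gamma^2 \cap U$, set $h(t) := ab$. Well-definedness: if also $t = F(a', b')$, then on the co-dimension $<k$ set
\[
E_{a,b,a',b'} := \bigl\{x \in \Gamma : (a,b,x),(a',b',x) \in U_3,\ (b,x),(b',x),(a,bx),(a',b'x) \in C\bigr\},
\]
the chain $abx = F(a, F(b,x)) = F(F(a,b),x) = F(t,x) = F(F(a',b'),x) = a'b'x$ plus cancellation forces $ab = a'b'$. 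Equivalently, $h$ is characterized by the Step (IV) property $h(t) = r \Longleftrightarrow \{x \in \Gamma : F(t,x) = rx\}$ has co-dimension $<k$ in $V$; uniqueness of $r$ follows from cancellation in $G$, injectivity of $h$ from injectivity of $F$ in the first coordinate on $U$ applied to a common $x$ in the witness sets, and largeness of $h(X)$ from a symmetric Fubini argument showing $h(g) = g$ for most $g \in G$ (pick $a \in \Gamma$ with $a^{-1} g \in \Gamma$ and $(a, a^{-1} g) \in C \cap U$).

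For Step V, take
\[
Z := \bigl\{(t, x) \in X^2 : (t, x) \in U,\ F(t,x) \in X,\ \dim\{v \in V : (t, x, v) \in U_3\} \geq k\bigr\},
\]
which is $\cal L$-definable by \textbf{(Ldef)} and has co-dimension $<2k$ in $X^2$ by \textbf{(D3)}. For $(t, x) \in Z$, the chain $F(F(t,x), v) = F(t, F(x, v)) = F(t, h(x)\, v) = h(t) h(x) v$ is valid for $v$ in a co-dimension $<k$ set (combining associativity on $U_3$ with two applications of the Step (IV) characterization of $h$, the second at the shifted argument $h(x) v$); the uniqueness clause in that characterization applied to $h(F(t,x))$ then forces $h(F(t,x)) = h(t) h(x)$.

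The main obstacle is Step IV: organizing the several co-dimension $<k$ conditions (injectivity of $F$ on $U$, associativity on $U_3$, agreement $F = \cdot$ on $C$) and intersecting them via repeated Fubini applications of \textbf{(D3)}, all while keeping the relevant witness sets inside the $\cal L$-definable category via \textbf{(Ldef)}. A final technical point is that the set of "bad" $t \in X$ where the characterizing property fails to produce a value of $r$ is only definable, not $\cal L$-definable; one verifies it has dimension $<k$ and applies \textbf{(D6)} to its characteristic function to obtain an $\cal L$-definable restriction of $X$ (and correspondingly of $U$ and $Z$) on which $h$ is everywhere well-defined, injective, and definable, without disturbing the other conclusions.
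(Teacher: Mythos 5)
Your roadmap is essentially the paper's: use Corollary \ref{slfull} and Lemma \ref{slF} to get $V$, $C$, $F$; trim to an $\cal L$-definable $U$ (and a triple version $U_3$, which the paper calls $W$) where $F$ is continuous, coordinate-injective, and associative; apply Corollary \ref{coordinate inj3} to get an $\cal L$-definable $X = F(\Gamma^2 \cap U)$; and define $h$ by the "co-large solution set" characterization. The final $Z$ is also the same up to cosmetic differences.

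There is, however, a genuine gap in the choice of $\Gamma$. You set $\Gamma := V \cap G$, and then assert that $E_{a,b,a',b'}$ has co-dimension $<k$. But the conditions $(b,x) \in C$, $(a,bx) \in C$, etc.\ only cut out a co-large set of $x$'s when the \emph{fibers} $C_b = \{x : (b,x) \in C\}$, $C_a$, etc.\ are co-large, and with $\Gamma = V \cap G$ there is no reason every $b \in \Gamma$ satisfies $\dim(V \sm C_b) < k$: from $\dim(V^2 \sm C) < 2k$ and \textbf{(D3)} one only learns that this holds off a dimension-$<k$ set of $b$'s. This is exactly why the paper's Lemma \ref{firststep} refines $\Gamma$ to $\{t : \dim S_t < k\}$ \emph{before} building $X$, so that every $t \in \Gamma$ has a co-large fiber $C_t$; then the existence of $r$ in the characterizing property holds for \emph{every} $s \in F(\Gamma^2 \cap U)$, not just generic ones. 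Your proposed repair (apply \textbf{(D6)} to the characteristic function of the "bad" set $X_{\text{bad}}$) does not close this gap: \textbf{(D6)} yields an $\cal L$-definable $E$ with $\dim(X \sm E) < k$ such that $E \cap X_{\text{bad}}$ has dimension $<k$, but it does not give $E \sub X \sm X_{\text{bad}}$. Indeed, the remaining intersection $E \cap X_{\text{bad}}$ need not be $\cal L$-definable, and an $\cal L$-definable co-large subset of $X \sm X_{\text{bad}}$ need not exist at all: if $X_{\text{bad}}$ is a dense small set (e.g.\ a copy of $P$), no infinite $\cal L$-definable set avoids it. Consequently one also cannot carve out an $\cal L$-definable $Z$ avoiding bad-point pairs. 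The fix is simply to refine $\Gamma$ at the outset as in Lemma \ref{firststep}; with that change the rest of your argument goes through and matches Steps I--V of the paper.

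One smaller point: the well-definedness and existence arguments also require, for the chosen representative $(t,x)$ of $s$, that the associativity set $W_{t,x}$ (your $\{v : (t,x,v) \in U_3\}$) be co-large in $V$. The paper builds this condition directly into $U$ via Corollary \ref{collectF}; you state associativity holds on $U_3$ of co-dimension $<3k$ in $V^3$, which alone does not give a \emph{pointwise} co-dimension $<k$ bound in the fiber. You should refine $U$ by \textbf{(Ldef)} so that $\dim(V \sm W_{t,x}) < k$ for all $(t,x) \in U$, exactly as in the paper.
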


The rest of this section is devoted to proving Theorem \ref{main_chunk}. The proof runs through the five first steps mentioned in the introduction. For $a, b\in G$, we write $a b$ for $a\cdot b$.

 \bigskip
\noindent\textbf{Step I : Recovering an $\cal L$-definable map $F$ from $\cdot$.}\\


The results on strongly large sets from Section \ref{sec-sl} are used here. Note that in the next lemma, if the set $C$ were $\cal L$-definable,   we would  already have proved Theorem \ref{main_chunk} (with $X=V$, $Z=C$, and $h=id$).

\begin{lemma}\label{firststep}
There are a closed $\cal L$-definable  set $V\sub M^n$, with $\dim (G\triangle V)<k$, a definable set $C\sub (V\cap G)^2$, with $\dim(V^2\sm C)<2k$, and an $\cal L$-definable map $F:V^2\to M^n$, such that
$$F_{\res C}= \cdot_{\res C}.$$
Moreover,
$$C=\bigcup_{t\in \Gamma} \{t\}\times C_t,$$
for some $\Gamma, C_t\sub V$, for $t\in \Gamma$, with $\dim(V\sm \Gamma)<k$ and  $\dim (V\sm C_t)<k$.
\end{lemma}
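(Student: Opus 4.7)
The plan is to apply Corollary~\ref{slfull} to produce the closed $\cal L$-definable set $V$, apply Lemma~\ref{slF} to the group operation to produce the $\cal L$-definable map $F$, and then refine a natural candidate for $C$ so that its fibers satisfy the dimension estimate required by the ``moreover'' clause.

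To construct $V$, I use that $G$ is strongly large of dimension $k$, so Corollary~\ref{slfull} decomposes $G = G_0 \cup G_1$ with $G_0$ full and $\dim G_1 < k$. Set $V := cl(G_0)$. This is closed and definable, and since its complement in $M^n$ is open and definable, assumption~(a) of Theorem~\ref{main} forces $V$ to be $\cal L$-definable. Fullness of $G_0$ gives $\dim(V \sm G_0) < k$, so $\dim V = k$, and combined with $\dim G_1 < k$ one checks that both $V \sm G$ and $G \sm V$ have dimension $<k$, whence $\dim(G \tr V) < k$.

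To produce $F$, note that \textbf{(D3b)} and the fact that topological closure commutes with Cartesian products imply $G^2 \sub M^{2n}$ is strongly large of dimension $2k$. Applying Lemma~\ref{slF} coordinate-wise to the operation $\cdot : G^2 \to M^n$ yields an $\cal L$-definable map $F : M^{2n} \to M^n$ (which I restrict to $V^2$ in the statement) together with a definable set $S$ of dimension $<2k$ such that $F = \cdot$ on $G^2 \sm S$.

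Finally, to construct $C$, set $C' := (V \cap G)^2 \sm S$; then $F_{\res C'} = \cdot_{\res C'}$, and the estimate $\dim(V^2 \sm C') < 2k$ follows from $\dim S < 2k$, $\dim(V \sm G) < k$, and \textbf{(D3b)}. The ``moreover'' clause is the delicate bookkeeping step: using \textbf{(D3a)} to classify the fibers of $V^2 \sm C'$ over $V$ by dimension, and \textbf{(D3b)} to recombine, the inequality $\dim(V^2 \sm C') < 2k$ forces $\dim\{t \in V : \dim(V \sm C'_t) \geq k\} < k$. Hence $\Gamma := \{t \in V : \dim(V \sm C'_t) < k\}$ satisfies $\dim(V \sm \Gamma) < k$, and taking $C := C' \cap (\Gamma \times V)$ realizes the desired form $C = \bigcup_{t \in \Gamma} \{t\} \times C_t$ with $\dim(V \sm C_t) < k$ for every $t \in \Gamma$, while preserving $F = \cdot$ on $C$ and $\dim(V^2 \sm C) < 2k$. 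The main obstacle is essentially only this dimension bookkeeping; the real substance is delegated to Corollary~\ref{slfull} and Lemma~\ref{slF}, which encode the power of strong largeness, together with assumption~(a) that makes topological closures of definable sets automatically $\cal L$-definable.
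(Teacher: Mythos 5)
Your proposal is correct and follows essentially the same route as the paper's proof: invoke Corollary \ref{slfull} to split off a full piece $G_0$ of $G$ and set $V = cl(G_0)$, invoke Lemma \ref{slF} to produce the $\cal L$-definable $F$ agreeing with $\cdot$ off a set of dimension $<2k$, and then use \textbf{(D3a)}/\textbf{(D3b)} to discard the fiberwise-bad locus and obtain $\Gamma$ and $C_t$. The only (cosmetic) deviation is that you apply Lemma \ref{slF} to $\cdot$ on all of $G^2$ and then intersect with $(V\cap G)^2$, whereas the paper restricts directly to $G_1^2$ (its name for the square of the full part) before applying the lemma; both give the same conclusion, and the fiber-dimension bookkeeping at the end matches the paper's.
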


\begin{proof}By Corollary \ref{slfull},  $G$ is the union of a full set $G_1$ and a set of dimension $<k$.  
Let $V=cl(G_1)$. Since $G_1$ is full,
 $\dim (V\sm G_1)<k$ and hence $\dim (V\sm G)<k$. Since also $\dim (G\sm V)\le \dim(G\sm G_1)<k$, we obtain $\dim (G\triangle V)<k$. Now, by Lemma \ref{slF}, there is a definable set $S\sub G_1^2$ of  dimension $<2k$ and an $\cal L$-definable map $F:M^2\to M^n$ that agrees with $\cdot$ on $G_1^2\sm S$.
 Let
$$\Gamma=\{t\in G_1: \ldim (S_t)<k\},$$
and for $t\in \Gamma$,
$$C_t=G_1\sm S_t.$$
Let also $C=\bigcup_{t\in \Gamma} \{t\}\times C_t$.
By \textbf{(D3)}, $\ldim (G_1\sm \Gamma) <k$, and  since $\dim (V\sm G_1)<k$, $\dim (V\sm \Gamma)<k$. Similarly, $\dim (V\sm C_t)<k$, for $t\in \Gamma$.
It follows from \textbf{(D3)} that $\dim(V^2\sm C)<2k$.
 Finally, since $C\sub G_1^2\sm S$, we have $F_{\res C}= \cdot_{\res C}.$
\end{proof}

\emph{For the rest of this section, we fix $ V, \Gamma, C_t, C$ and $F$ as above, and  use their properties without any specific mentioning.}\\

\noindent Note that since $\dim (G\sm V)<k$, it follows  that $\dim (G^2\sm C)<2k$, $\dim (G\sm \Gamma)<k$, and, for $t\in \Gamma$, $\dim (G\sm C_t)<k$.\\

\noindent\textbf{Step II: Group-like properties of $F$.}\\

Here we prove the existence of an  $\cal L$-definable set $U\sub V^2$, with $\dim(V^2\sm U)<2k$, on which $F$ is continuous and behaves like a group operation, with $F(U)\sub V$. This is done through a series of lemmas.




\begin{lemma}\label{Wcont}
There is an $\cal L$-definable set $U\sub V^2$, with $\dim(V^2\sm U)<2k$, such that $F_{\res U}$ is continuous and $F(U)\sub V$.
\end{lemma}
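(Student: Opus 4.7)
The plan is to obtain $U$ by removing from $V^2$ two $\cal L$-definable sets of dimension $<2k$: the discontinuity locus of $F_{\res V^2}$, and the set where $F$ leaves $V$. Concretely, I would set $D$ to be the set of discontinuity points of $F_{\res V^2}$ and $W=V^2\cap F^{-1}(M^n\sm V)$; both are $\cal L$-definable (since $V$ and $F$ are), and define $U:=V^2\sm(D\cup W)$. By construction $F_{\res U}$ is continuous and $F(U)\sub V$; the task reduces to showing $\dim(D\cup W)<2k$.

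For $D$, the map $F_{\res V^2}$ is an $\cal L$-definable function on an $\cal L$-definable set of o-minimal dimension $2k$, so by the standard o-minimal continuity result (via $C^1$ cell decomposition), its discontinuity locus has o-minimal dimension $<2k$; by \textbf{(D5)} this gives $\dim D<2k$.

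The crux is to bound $\dim W$. I would decompose $W=(W\cap C)\cup(W\sm C)$. The second piece satisfies $\dim(W\sm C)\le \dim(V^2\sm C)<2k$ by Lemma \ref{firststep}. For the first piece, since $F_{\res C}=\cdot_{\res C}$ and $C\sub(V\cap G)^2$, we have $F(C)\sub G$, hence $W\cap C\sub F^{-1}(G\sm V)\cap C$. For each $t\in\Gamma$, the map $F(t,-)_{\res C_t}$ is left-multiplication by $t$, which is a bijection from $C_t$ onto $tC_t\sub G$; so by \textbf{(D4)},
$$\dim\bigl(C_t\cap F(t,-)^{-1}(G\sm V)\bigr)=\dim\bigl(tC_t\cap(G\sm V)\bigr)\le \dim(G\sm V)<k,$$
using $\dim(G\sm V)\le\dim(G\tr V)<k$. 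Aggregating over $t\in\Gamma$ via \textbf{(D3)}, after stratifying $\Gamma$ by fiber dimension through \textbf{(D3a)}, yields $\dim(W\cap C)<2k$, and hence $\dim W<2k$.

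The main obstacle is precisely this fiberwise dimension calculation on $W\cap C$: one has to convert the small-codimension condition $\dim(G\sm V)<k$ (which lives in the ambient space) into a small-codimension condition on each fiber $C_t$, and this requires the fiberwise injectivity of $\cdot$ together with axioms \textbf{(D3a)} and \textbf{(D3b)} to assemble the fiberwise bounds into a global one.
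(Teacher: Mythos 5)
Your proposal is correct, and it takes a genuinely different route from the paper's. The paper first fixes $U$ to be a finite union of $2k$-cells on which $F$ is continuous, then shows that $F$ maps a set $C'=\bigcup_{t\in\Gamma}\{t\}\times C'_t$ (with $C'_t=\{x\in C_t: F(t,x)\in\Gamma\}$) into $V$, observes that $\dim(V^2\sm C')<2k$ so that $C'$ is dense in $U$ by Fact~\ref{denseldim}, and finally concludes $F(U)\sub V$ from a topological argument: $U\sub cl(C')$, $F_{\res U}$ is continuous, and $V$ is closed. You instead observe that the escape set $W=V^2\cap F^{-1}(M^n\sm V)$ is itself $\cal L$-definable and bound its dimension directly, then simply delete it. The fiberwise computation at the heart of both arguments is the same one: for $t\in\Gamma$, the map $F(t,-)_{\res C_t}=t\cdot(-)$ is a definable bijection, so \textbf{(D4)} transfers the codimension bound on $G\sm V$ (or $G\sm\Gamma$) from the ambient space to the fiber, and \textbf{(D3)} aggregates across $t$. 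What each route buys: yours is more direct and avoids the density/closedness/continuity chain entirely, making the proof self-contained once one accepts the stratification step; the paper's sidesteps having to name $W$ or argue about $F^{-1}(M^n\sm V)$ at all, and folds everything into the single density estimate, at the modest cost of needing $V$ closed and $U$ a union of $2k$-cells (which it wants anyway for the downstream Corollary~\ref{coordinate inj3}). Both uses of \textbf{(D3)} implicitly require the stratification by fiber dimension via \textbf{(D3a)}, so neither route is cheaper on that axis. One small point worth making explicit in your write-up: the continuity of $F_{\res U}$ after deleting $W$ still holds because continuity at a point is preserved under restriction of the domain, so removing $W$ from $V^2\sm D$ cannot create new discontinuities.
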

\begin{proof}
By o-minimality, there is an $\cal L$-definable set  $U\sub V^2$, which is a finite union of $2k$-cells, with $\dim(V^2\sm U)<2k$, such that $F_{\res U}$ is continuous. We claim that $F(C')\sub V$, for some set $C'\sub V$ which is dense in $U$. Indeed, for every $t\in \Gamma$, consider the set
$$C'_t=\{x\in C_t : F(t,x)\in  \Gamma\}. $$
Since $F_{\res C} = \cdot_{\res C}$, we have that $F(t, -)_{\res C_t}$ is injective. Since also $\ldim (G\sm \Gamma)<k$, it follows from \textbf{(D4)} that $\dim(C_t\sm C'_{t})<k$. Hence $\ldim (V\sm C'_t)<k$. Moreover, $F(t, C'_t)\sub \Gamma\sub V$.
Let $$C'=\bigcup_{t\in \Gamma}\{t\}\times C'_t.$$ By \textbf{(D3)},  $\ldim (V^2\sm C')<2k$, and hence $\ldim(U\sm C')<2k$. By Fact \ref{denseldim}, since  $U$ is a finite union of $2k$-cells, $C'$ is dense in $U$. Moreover, $F(C')\sub V$, as required. Now, since $U\sub cl(C')$, $V$ is closed and $F_{\res U}$ is continuous,  it follows that $F(U)\sub V$.
\end{proof}



\begin{lemma}\label{Finj}
There is an $\cal L$-definable set $U\sub V^2$ with $\dim (V^2\sm U)<2k$, such that $F_{\res U}$ is injective in each coordinate.
\end{lemma}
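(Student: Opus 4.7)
My plan is to reduce injectivity of $F$ in each coordinate to ordinary injectivity statements about two auxiliary $\cal L$-definable maps, and then invoke Lemma~\ref{inj-lemma} twice.

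I would define $G, G' : V^2 \to M^{2n}$ by
$$G(t, x) = (x, F(t, x)) \qquad \text{and} \qquad G'(t, x) = (t, F(t, x)).$$
Both are $\cal L$-definable since $F$ is. The key observation is that $G$ is injective on a set $Y \sub V^2$ if and only if, for every $x \in V$, the map $F(-, x)$ is injective on the fiber $Y^x$; dually, injectivity of $G'$ on $Y'$ is equivalent to injectivity of $F(t, -)$ on each fiber $Y'_t$. Thus the conclusion of the lemma reduces to producing a single $\cal L$-definable set of codimension $< 2k$ in $V^2$ on which both $G$ and $G'$ are (globally) injective.

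I would then verify that $G$ and $G'$ are injective on the set $C$ from Lemma~\ref{firststep}. If $(t, x), (s, y) \in C$ and $G(t, x) = G(s, y)$, then $x = y$ and $F(t, x) = F(s, y)$; since $F$ agrees with the group operation on $C$ and $C \sub (V \cap G)^2$, this reads $tx = sx$, so $t = s$ by cancellation in the group $G$. The analogous computation, using the first projection, handles $G'$. Since $\dim V = k$ (so $\dim V^2 = 2k$) and $\dim(V^2 \sm C) < 2k$, Lemma~\ref{inj-lemma} applies with $X = V^2$ and $D = C$, once to $G$ and once to $G'$, producing $\cal L$-definable sets $Y, Y' \sub V^2$, each of codimension $< 2k$ in $V^2$, on which $G$ respectively $G'$ is injective. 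Setting $U = Y \cap Y'$ gives an $\cal L$-definable set of codimension $< 2k$ in $V^2$ (by \textbf{(D2)}) on which $F$ is injective in each coordinate.

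I do not foresee a serious obstacle: Lemma~\ref{inj-lemma} is already tailored to the "definable $D$, $\cal L$-definable map" setting we are in, and the group cancellation laws translate immediately into injectivity of the bundled maps on $C$. The only conceptual step is recognizing that coordinate-wise injectivity of $F$ is equivalent to global injectivity of $G$ and $G'$, after which the proof is essentially one application (twice) of the previously established lemma.
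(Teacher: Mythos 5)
Your proof is correct, and it takes a genuinely different (and cleaner) route than the paper's. The paper works coordinate-by-coordinate: to achieve injectivity of $F(t,-)$ on co-dimension-$<k$ many fibers, it argues by contradiction, introducing for each $t\in V$ the set $N(t)$ of non-injectivity witnesses and the $\cal L$-definable set $K=\{t\in V : \dim N(t)=k\}$; assuming no good $U$ exists forces $\dim K=k$, whence density of $\Gamma$ in $V$ yields $t\in K\cap\Gamma$, to which Lemma~\ref{inj-lemma} is applied \emph{fiber-wise} (with $X=V$, $D=C_t$, dimension $k$) to reach a contradiction. Your approach instead bundles the coordinates into $G(t,x)=(x,F(t,x))$ and $G'(t,x)=(t,F(t,x))$, observes that coordinate-wise injectivity of $F$ is literally global injectivity of $G$ and $G'$, checks that the group cancellation law makes $G$ and $G'$ injective on the set $C$ from Lemma~\ref{firststep}, and applies Lemma~\ref{inj-lemma} \emph{globally} (with $X=V^2$, $D=C$, dimension $2k$) twice. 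This avoids both the contradiction setup and the density argument, and exposes the result as a direct consequence of Lemma~\ref{inj-lemma} applied to an auxiliary map; it is in that sense the more economical proof. The only thing worth being explicit about (and you essentially are) is that, writing $U=Y\cap Y'$, injectivity of $G$ on $Y\supseteq U$ and of $G'$ on $Y'\supseteq U$ together give injectivity of $F_{\res U}$ in each coordinate, and $\dim(V^2\sm U)<2k$ follows from \textbf{(D2)} since $V^2\sm U=(V^2\sm Y)\cup(V^2\sm Y')$.
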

\begin{proof}
It suffices to find an $\cal L$-definable set $U\sub V^2$ with $\dim (V^2\sm U)<2k$, such that $F_{\res U}$ is injective in the second coordinate. One can then similarly find $U'\sub V^2$ with $\dim (V^2\sm U')<2k$ and $F_{\res U'}$  injective in the first coordinate, and the intersection $U\cap U'$ is the desired set.

Suppose towards a contradiction that there is no such $U$. For every $t\in V$, let
$$N(t)=\{x\in V: \exists y\in V,\, y\ne x \text{ and } F(t, x)=F(t, y)\}.$$ Then the set $$K=\{t\in V:  \dim N(t) =k\}$$ is $\cal L$-definable. So for any  $t\in K$, $F(t, -)$ is not injective on any $\cal L$-definable subset of $V$ of co-dimension $<k$. So, by assumption and \textbf{(D3)}, $\dim K= k$. Since $\Gamma $ is dense in $V$,  there is $t\in K\cap \Gamma\ne \es$. Since $t\in \Gamma$, $F(t, -)_{\res C_t}$ is injective. By Lemma \ref{inj-lemma}, for $D=C_t$ and $X=V$, there is an $\cal L$-definable subset of $V$ of co-dimension $<k$ on which $F(t, -)$ is  injective, contradicting $t\in K$.
\end{proof}

\begin{lemma}\label{W}
There is an $\cal L$-definable set $W\sub V^3$ with $\dim (V^3\sm W)< 3k$, such that for every $(x, y, z)\in W$,
$$(*)\,\,\,\,\, F(x, F(y, z))= F(F(x, y), z).$$
\end{lemma}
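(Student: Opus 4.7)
The plan is to take $W$ to be the $\cal L$-definable set cut out by the associativity equation itself, and establish its co-dimension bound by exhibiting a large definable subset of it on which associativity reduces, via Lemma \ref{firststep}, to associativity of the group operation $\cdot$.

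Explicitly, I would set
$$W := \{(x,y,z) \in V^3 : F(x, F(y,z)) = F(F(x,y), z)\},$$
which is $\cal L$-definable since both $V$ and $F$ are. To verify $\dim(V^3 \sm W) < 3k$, I would introduce the auxiliary definable set $A \sub V^3$ consisting of those $(x,y,z) \in (V \cap G)^3$ such that $(x,y), (y,z) \in C$, both $xy$ and $yz$ lie in $V \cap G$, and $(xy, z), (x, yz) \in C$. On $A$, Lemma \ref{firststep} forces $F(x,y) = xy$, $F(y,z) = yz$, $F(xy, z) = (xy)z$, and $F(x, yz) = x(yz)$, so associativity of $\cdot$ yields $F(F(x,y), z) = F(x, F(y,z))$ and hence $A \sub W$. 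It thus remains to show $\dim(V^3 \sm A) < 3k$.

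I would prove this by decomposing $V^3 \sm A$ into six pieces according to which defining clause of $A$ fails, and bounding each by $3k$ using \textbf{(D2)}, \textbf{(D3)}, and \textbf{(D4)}. The clauses $(x,y) \notin C$ and $(y,z) \notin C$ give sets of the form $(V^2 \sm C) \times V$ and $V \times (V^2 \sm C)$, of dimension $< 2k + k = 3k$ directly from Lemma \ref{firststep}. The clauses involving the products $xy$ and $yz$ are handled via the definable bijections $\Phi, \Psi : G^3 \to G^3$ given by $\Phi(x,y,z) = (x, xy, z)$ and $\Psi(x,y,z) = (x, yz, z)$, which preserve dimension by \textbf{(D4)}. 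Under $\Phi$, for instance, the set $\{(x,y,z) \in G^3 : xy \notin V\}$ maps bijectively onto $G \times (G \sm V) \times G$, of dimension $< 3k$ since $\dim(G \sm V) < k$, while $\{(x,y,z) \in G^3 : (xy, z) \notin C\}$ maps onto $G \times \{(u,z) \in G^2 : (u,z) \notin C\}$, of dimension $< k + 2k = 3k$. The $\Psi$-image handles the mirror clauses. The cost of replacing $V^3$ by $G^3$ throughout is at most $\dim(V^3 \triangle G^3) < 3k$, absorbed into the count.

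I do not foresee a serious obstacle here; the argument is essentially dimension bookkeeping. The only conceptual choices are the definition of $A$, engineered so that all four $F$-evaluations on a triple in $A$ collapse onto the group product $xyz$, and the $\Phi, \Psi$-translation trick, which converts fibered conditions on products into product conditions to which the co-dimension estimate for $V^2 \sm C$ applies directly.
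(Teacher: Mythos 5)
Your proof is correct and takes a genuinely different route from the paper's. The paper first selects $W$ by generic o-minimal continuity: a finite union of $3k$-cells with $\dim(V^3\sm W)<3k$ on which both composites $F(-,F(-,-))$ and $F(F(-,-),-)$ are continuous. It then exhibits a large, merely $\cal N$-definable subset $T\sub V^3$ on which the associativity equation holds, defined via nested fibered families $Y_x=\Gamma\cap F(x,-)^{-1}(\Gamma)$ and $Z_{x,y}=F(y,-)^{-1}(C_x)\cap C_y\cap C_{F(x,y)}$, and concludes by density (Fact \ref{denseldim}) and continuity that the equation holds on all of $W$. You instead take $W$ to be the $\cal L$-definable zero locus of the associativity discrepancy, so no continuity or density step is required; your set $A$ plays the role of the paper's $T$, and the co-dimension bound follows at once from $A\sub W$ together with $\dim(V^3\sm A)<3k$. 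Your dimension bookkeeping via the group bijections $\Phi,\Psi$ serves the same purpose as the paper's appeal to \textbf{(D3)} and \textbf{(D4)}. The direct approach buys a shorter proof and a canonical $W$; the paper's approach buys a $W$ that is in addition a finite union of cells, though this extra structure is not used later. Two small remarks. First, your definition of $W$ should also stipulate $F(x,y),F(y,z)\in V$, since $F$ is given with domain $V^2$; this condition is $\cal L$-definable and holds on $A$, so nothing changes. Second, your conditions on $A$ include $(x,y)\in C$, which is exactly what is needed to conclude $F(x,y)=xy$ and hence that $F(F(x,y),z)=(xy)z=xyz$; the paper's listed conditions for $T$ appear to omit the corresponding requirement $y\in C_x$, a small slip that your version avoids.
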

\begin{proof} By o-minimality, there is an $\cal L$-definable set  $W\sub V^3$, which is a finite union of $3k$-cells, with $\dim (V^3\sm W)<3k$, such that both maps $F(-, F(-, -)):V^3\to M^n$ and $F(F(-, -), -):V^3$ are  continuous on $W$.
It is thus enough to prove that (*) holds on a dense subset of $W$. 

We observe that for every $(x, y, z)\in V^3$ with $x, y, F(x,y)\in \Gamma$, $F(y,z)\in C_x$ and $z\in C_y\cap C_{F(x,y)}$,  equation (*) holds, since both of its sides equal $xyz$. Hence, if, for every $x, y\in V$, we let
$$Y_x=\Gamma\cap F(x, -)^{-1}(\Gamma)$$
and
$$Z_{x, y}= F(y, -)^{-1}(C_x)\cap C_y \cap C_{F(x,y)},$$
then (*) holds on the set
$$T=\bigcup_{(x,y)\in \bigcup_{x\in \Gamma} \{x\}\times Y_x} \{(x,y)\}\times Z_{x,y}.$$

Since $F_{\res C}=\cdot_{\res C}$, $\dim (G\sm \Gamma)<k$ and, for $x\in \Gamma$, $\dim (G\sm C_x)<k$, it follows easily from \textbf{(D4)} that for every $x\in \Gamma$ and $y\in Y_x$, $\ldim (G\sm Y_x)<k$ and $\ldim(G\sm Z_{xy})<k$.
Since also $\ldim(G\sm \Gamma)<k$,  \textbf{(D3)} implies that $\ldim (G^3\sm T)<3k$, and hence $\dim (V^3\sm T)<3k$. Thus $\ldim (W\sm T)<3k$, and by Fact \ref{denseldim}, $T$ is dense in $W$.
\end{proof}

We can refine the set $U$ in order to achieve two additional properties.

\begin{cor}\label{collectF} Let $W$ be as in Lemma \ref{W}. Then, there is an $\cal L$-definable set $U\sub V^2$, such that
\begin{itemize}
  \item $\dim (V^2\sm U)<2k$, 
  \item $F_{\res U}$ is continuous, and injective in each coordinate,
  \item $F(U)\sub V$,
  \item for every $t\in V$, $\dim(V\sm U_t)<k$,
  \item for every $(t, x)\in U$, $\ldim(V\sm W_{t,x})<k$. 
\end{itemize}
\end{cor}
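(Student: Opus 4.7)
The plan is to combine the sets constructed in Lemmas~\ref{Wcont} and~\ref{Finj} and then refine the result by restricting to $\cal L$-definable subsets extracted from $W$ and from slice information provided by \textbf{(D3)}. First, intersect the two sets given by those lemmas to obtain an $\cal L$-definable $U_0\sub V^2$ with $\dim(V^2\sm U_0)<2k$, on which $F$ is continuous and injective in each coordinate and $F(U_0)\sub V$; this secures the first three bullets. Next, to secure the fifth bullet, set
\[
S=\{(t,x)\in V^2:\dim(V\sm W_{t,x})<k\},
\]
which is $\cal L$-definable by \textbf{(D5)} and o-minimality of dimension in parametrized $\cal L$-definable families. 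Since $\dim(V^3\sm W)<3k$, applying \textbf{(D3)} to the slice family of $V^3\sm W$ over $V^2$ gives $\dim(V^2\sm S)<2k$, so $U_1=U_0\cap S$ still satisfies the first three bullets and now also the fifth. Finally, to secure the fourth bullet, let
\[
T=\{t\in V:\dim(V\sm (U_1)_t)<k\},
\]
which is $\cal L$-definable by the same reasoning; since $\dim(V^2\sm U_1)<2k$, another application of \textbf{(D3)} yields $\dim(V\sm T)<k$. Set $U=U_1\cap(T\times V)$.

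For the verification: $V^2\sm U\sub (V^2\sm U_1)\cup((V\sm T)\times V)$, and both summands have dimension $<2k$ (the second by \textbf{(D3)} applied to the constant family), so the first bullet holds. The second, third and fifth bullets are inherited under restriction from $U_1$ (for continuity, injectivity in each coordinate, $F(U)\sub V$, and the $W$-slice condition). The fourth bullet holds for every $t\in T$ by construction. The main obstacle is coordinating all five conditions so that each successive refinement preserves the ones already achieved; this succeeds precisely because each bulleted property is downward closed under restriction of $U$. A minor interpretative point concerns the fourth bullet, whose literal reading quantifies over every $t\in V$, whereas our construction secures the condition for $t$ in the co-small set $T$; this is enough for the subsequent uses of Corollary~\ref{collectF} in Step~III of Section~\ref{sec-proof}, where the slices $U_t$ are only inspected for $t$ ranging over a co-dimension $<k$ subset of $V$ (inside $\Gamma$).
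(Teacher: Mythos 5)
Your proof takes essentially the same route as the paper's: combine the sets from Lemmas~\ref{Wcont} and~\ref{Finj}, intersect with the $\cal L$-definable set of pairs $(t,x)$ whose $W$-slice is co-small (the paper's $U_2$, your $S$), and finally impose the fiber condition using \textbf{(D3)}. The one genuine difference is the \emph{order} of refinements, and your order is the more careful one. The paper defines $T=\{t\in V:\dim(V\sm U_t)<k\}$ using the slices of the Lemma~\ref{Wcont} set \emph{before} intersecting with $U_2$ and the Lemma~\ref{Finj} set, so the slices of the final intersection at $t\in T$ are $(U_{\ref{Wcont}})_t\cap (U_2)_t\cap (U_{\ref{Finj}})_t$, and nothing a priori prevents the last two intersectands from pushing the co-dimension back up to $k$ for some $t\in T$. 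By defining $T$ \emph{after} all the other intersections, you guarantee that the final $U$ has $\dim(V\sm U_t)<k$ for all $t\in T$, with no further thinning possible; this is what the fourth bullet actually requires.

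Your interpretive observation about the fourth bullet is also accurate and applies equally to the paper: since $U_t=\es$ whenever $t\notin T$, the bullet cannot literally hold for \emph{every} $t\in V$, and what both constructions deliver is the condition on a co-dimension $<k$ $\cal L$-definable subset of $V$. That is indeed what is invoked in the sequel (Step~IV, Claim~\ref{ginj} and Lemma~\ref{Fhom}, where the slices are inspected for $t$ ranging over $X\sub V$). One small caveat worth flagging for yourself: the set $X=F(\Gamma^2\cap U)$ constructed in Step~III is co-small in $V$ but is not \emph{a priori} contained in your $T$, so if one wants the statement used later to follow literally from the corollary, one should either note that $X\cap T$ is still co-small (which suffices for the dimension-counting arguments there) or phrase the fourth bullet as an assertion about a co-dimension $<k$ set of $t$'s. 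This is a cosmetic issue shared with the paper, not a gap introduced by your argument.
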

\begin{proof} Let $U$  be as in Lemma \ref{Wcont}. Define $$T=\{t\in V : \dim (V\sm U_t)<k\}$$ and
$$U_1=\bigcup_{t\in T} \{t\}\times U_t.$$
Since $\dim (V^2\sm U)<2k$, we obtain $\dim (V\sm T)<k$ and $\dim (V^2\sm U_1)<2k$. 
Define also
$$U_2=\{ (t, x)\in V^2: \dim (V\sm W_{t,x})<k\}.$$
Since $\dim (V^3\sm W)<3k$, we obtain $\dim (V^2\sm U_2)<2k$. 
The desired set $U$ is the intersection of $U_1$, $U_2$ and the set obtained in Lemma \ref{Finj}.
\end{proof}

\emph{For the rest of this section, we fix the sets $U$ and $W$ as above, and  use their properties without any specific mentioning.}\\


\noindent\textbf{Step III: Extracting an $\cal L$-definable set $X\sub V$ using $F$.}\\


In this step, we  use $F$ to recover a suitable $\cal L$-definable set $X\sub V$ with $\dim (V\sm X)<k$. The work from Section \ref{sec-localLdef} plays an essential role here. The suitability of $X$ will be evident in Step IV.

\begin{lemma}\label{F(V)}
 The set $X=F(\Gamma^2 \cap U)$ is  $\cal L$-definable with $\dim(V\sm X)<k$.
 \end{lemma}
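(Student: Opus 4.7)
The proof has two parts: (i) $X$ is $\cal L$-definable, and (ii) $\dim(V \setminus X) < k$. For (i), my plan is to apply a slight generalization of Corollary \ref{coordinate inj3} after reducing $V$ and $U$ to the required cell form. For (ii), I would use a fiber-dimension argument that exploits the group structure of $G$ together with the identity $F_{\res C} = \cdot_{\res C}$.

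For (i), I would first refine the $U$ from Corollary \ref{collectF} to a finite union of $2k$-cells by retaining its $2k$-dimensional cell-decomposition part; this preserves all the properties listed in Corollary \ref{collectF}. Next, I would decompose $V$ into finitely many $k$-cells $V_1, \ldots, V_p$ (modulo a residue of dimension $<k$), and further refine so that each $2k$-cell of $U$ lies in some product $V_i \times V_j$ and is sent by $F$ into a single $k$-cell $V_\ell$. I then apply a mild generalization of Corollary \ref{coordinate inj3}---where the ambients of $\Gamma_1, \Gamma_2$ and the codomain are allowed to be three possibly distinct $k$-cells, a generalization that follows immediately from Lemma \ref{coordinate inj2} with the same proof---to each triple $(V_i, V_j, V_\ell)$. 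Taking the finite union over $(i,j,\ell)$ gives $\cal L$-definability of the main part of $X$; the residual contributions from discarded lower-dimensional pieces lie in the lower-dimensional part of $V$ and can be absorbed via a straightforward induction on $k$ (the base case $k=0$ being trivial, since then $X$ is finite).

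For (ii), since $\dim(G \triangle V) < k$, it suffices to show $\dim((V \cap G) \setminus X) < k$. For each $y \in G$, the fiber $L_y = \{(a,b) \in G^2 : a \cdot b = y\}$ is in definable bijection with $G$ via $a \mapsto (a, a^{-1} y)$, and hence has dimension $k$ by \textbf{(D4)}. Let $E = V^2 \setminus (\Gamma^2 \cap U \cap C)$; the bounds $\dim(V \setminus \Gamma) < k$, $\dim(V^2 \setminus U) < 2k$, and $\dim(V^2 \setminus C) < 2k$ give $\dim E < 2k$. If $y \in (V \cap G) \setminus X$ and some $(a,b) \in L_y \cap \Gamma^2 \cap U \cap C$ existed, then $F_{\res C} = \cdot_{\res C}$ would give $F(a,b) = a \cdot b = y$, forcing $y \in X$, a contradiction. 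Hence $L_y \cap \Gamma^2 \cap U \cap C = \emptyset$, and passing to $L_y \cap V^2$ (which has dimension $k$ for $y$ outside a set of dimension $<k$, using $\dim(G \setminus V) < k$), we obtain $L_y \cap V^2 \subseteq E$. By \textbf{(D3)(b)} applied to the definable family $\{L_y \cap V^2\}_{y \in Y}$ with $Y$ the set of such exceptional $y$, we get $\dim Y + k = \dim \bigcup_{y \in Y} (L_y \cap V^2) \leq \dim E < 2k$, whence $\dim Y < k$.

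The main obstacle is the cell-decomposition bookkeeping in (i): ensuring that the local $\cal L$-definability argument covers every point of $X$, including those lying on lower-dimensional boundaries between cells. The fiber-dimension argument in (ii) is then a standard application of \textbf{(D3)(b)} once the $G$-versus-$V$ discrepancy is handled by the bound $\dim(G \triangle V) < k$.
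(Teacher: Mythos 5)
Your proof is correct in outline, but it diverges from the paper's on both halves, more substantially on (ii).

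For (i), you are right to flag that Corollary \ref{coordinate inj3} is stated only for a $k$-cell $V$ and a $U$ that is a finite union of $2k$-cells, whereas the $V$ coming out of Lemma \ref{firststep} is merely a closed $\cal L$-definable set of dimension $k$ and the $U$ from Corollary \ref{collectF} is an intersection that need not remain a pure union of $2k$-cells. However, your remedy is over-engineered and the final step is under-justified: after discarding the lower-dimensional pieces of $U$, you claim the ``residual contributions $\ldots$ can be absorbed via a straightforward induction on $k$,'' but it is not clear why $F(\Gamma^2\cap U'')$ for the low-dimensional residue $U''$ should be $\cal L$-definable at all, and there is no obvious induction hypothesis that applies. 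The cleaner fix, implicit in the paper, is simply to \emph{replace} $U$ at the outset by the union of the $2k$-cells in a cell decomposition of it (this preserves $\dim(V^2\sm U)<2k$ and all properties of Corollary \ref{collectF}), and, if one wishes to hew to the literal hypotheses of Corollary \ref{coordinate inj3}, to observe that its proof goes through verbatim when $V$ is any $\cal L$-definable set of dimension $k$ (the remark after Lemma \ref{coordinate inj2} already points this out). Then $X=F(\Gamma^2\cap U)$ is $\cal L$-definable by the corollary with no residuals to deal with.

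For (ii), your fiber-counting argument is genuinely different from the paper's and is correct. You show that for each exceptional $y\in(V\cap G)\sm X$ the fiber $L_y\cap V^2$ of multiplication (which has dimension $k$ by \textbf{(D4)}) sits inside the bad set $E=V^2\sm(\Gamma^2\cap U\cap C)$ of dimension $<2k$, and then \textbf{(D3)(b)} forces the set of such $y$ to have dimension $<k$. This is a robust global argument and does not use the $\cal L$-definability of $X$ just proved. The paper instead exploits the $\cal L$-definability of $X$ directly: it fixes a \emph{single} $t\in\Gamma$, sets $K=U_t\cap C_t$, notes $F(t,K)=tK\sub X$, and shows via the translate $t(G\sm K)$ and Fact \ref{denseldim} that $tK$ is dense in the $k$-dimensional part of $V$; since $X$ is $\cal L$-definable and contains a dense subset of each $k$-cell of $V$, o-minimality gives $\dim(V\sm X)<k$. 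The paper's route is shorter and uses only one translate; yours avoids appealing to density and to the $\cal L$-definability of $X$ in part (ii), at the cost of the slightly more elaborate \textbf{(D3)(b)} bookkeeping (note: in fact $\dim(L_y\cap V^2)=k$ for \emph{every} $y\in G$, since $L_y\sm V^2$ is covered by two definable copies of $G\sm V$, so the ``outside a set of dimension $<k$'' caveat is unnecessary). Both approaches are sound.
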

\begin{proof}
By Corollary \ref{coordinate inj3}, $X$ is \cal L-definable, so we need to show that $\dim(V\sm X)<k$. By cell decomposition, $V$ is a finite union of cells. Let $V'$ be the union of all $2k$-cells in this decomposition. We write $V$ for $V'$. Since $X$ is $\cal L$-definable, it suffices to show that $X$ is dense in $V$. Pick any $t\in \Gamma$ and write $K=U_t\cap C_t$. So $\dim (\Gamma \sm K)<k$. On the one hand, we have
$$\Gamma\sm t K\sub t (G\sm K)\sub t (\Gamma\sm K)\cup t (G\sm \Gamma)$$
and hence $\ldim (\Gamma\sm t K)<k$. Since $\ldim (V\sm \Gamma)<k$, we obtain $\ldim (V\sm t K)<k$. By Fact \ref{denseldim},  $t K$ is dense in $V$. On the other hand,
$$F(t, K)=t K.$$
That is,  $F(t, U_t\cap C_t)$ is dense  in $V$, and hence so is $X$.
\end{proof}




\smallskip
\noindent\textbf{Step IV: Constructing a definable embedding $h:X \to G$.}\\

In this step, we embed $X$ into $G$, after proving the key property (*) from the introduction.
For every $t\in X$ and $r\in G$, the set
$$L_{t,r}=\{x\in V\cap G: F(t,x)=r x\}$$
is definable.



\begin{lemma}
For every $s\in X$, there is unique $r\in G$, such that $\ldim(G\sm L_{s,r})<k$.
\end{lemma}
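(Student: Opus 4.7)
The plan is to exploit the fact that any $s \in X$ admits a witness $s = F(a,b)$ with $(a,b) \in \Gamma^2 \cap U$, and then to take $r := a \cdot b \in G$ (well-defined since $\Gamma \subseteq V \cap G$). The intuition is that $F$ locally computes $\cdot$ on a set where $\Gamma$ is ``well-placed'', and associativity on $W$ lets us transport the operation from $F(a,b)\cdot x$ to $a \cdot (b \cdot x)$.

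For existence, I would show that $L_{s,r}$ contains the set of $x \in V \cap G$ satisfying the four conditions (i) $(a,b,x) \in W$, (ii) $(b,x) \in U$, (iii) $x \in C_b$, and (iv) $F(b,x) \in C_a$. For such $x$, the chain
\[
F(s,x) \;=\; F(F(a,b),x) \;\overset{\text{(i)}}{=}\; F(a,F(b,x)) \;\overset{\text{(iii)}}{=}\; F(a,bx) \;\overset{\text{(iv)}}{=}\; a(bx) \;=\; rx
\]
goes through by Lemma \ref{W} (for the second equality) and the fact that $F_{\res C} = \cdot_{\res C}$ (for the last two). Each of the four excluded sets has dimension $< k$ in $V$: for (i) and (ii), Corollary \ref{collectF} applied at $(a,b)\in U$ and $b\in V$; for (iii), the choice $b\in \Gamma$; for (iv), observe that $F(b,-)$ is injective on $U_b$ into $V$ by Corollary \ref{collectF}, so $\{x \in U_b : F(b,x) \notin C_a\}$ is in definable bijection with a subset of $V\setminus C_a$, whose dimension is $<k$ because $a \in \Gamma$, and then \textbf{(D4)} transfers this bound. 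Together with $\dim(G \setminus V) < k$ and \textbf{(D2)}, we conclude $\dim(G \setminus L_{s,r}) < k$.

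For uniqueness, suppose $r_1, r_2 \in G$ both have the stated property. Then by \textbf{(D2)},
\[
\dim\!\bigl(G \setminus (L_{s,r_1} \cap L_{s,r_2})\bigr) \;<\; k \;=\; \dim G,
\]
so $L_{s,r_1} \cap L_{s,r_2}$ has dimension $k$ and in particular is nonempty. Any $x$ in this intersection satisfies $r_1 x = F(s,x) = r_2 x$, forcing $r_1 = r_2$. (The edge case $k=0$ reduces to $G$ finite and $F = \cdot$ on $V^2 \cap G^2$, where $r=s$ works trivially.)

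The only step requiring real care is item (iv): we are translating a smallness property of $V \setminus C_a$ across the map $F(b,-)$, and we must use both its injectivity (from Corollary \ref{collectF}) and dimension invariance \textbf{(D4)} to pull the co-smallness back from the image to the domain. Everything else is bookkeeping with the co-dimension estimates already assembled in Steps I--III.
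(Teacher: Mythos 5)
Your proof is correct and follows essentially the same route as the paper: write $s=F(a,b)$ with $(a,b)\in\Gamma^2\cap U$, set $r=a\cdot b$, use associativity on $W_{a,b}$ together with $F_{\res C}=\cdot_{\res C}$ to obtain $F(s,x)=rx$ on a co-large subset, and get uniqueness from the disjointness of $L_{s,r}$ and $L_{s,r'}$ for $r\ne r'$. The only cosmetic deviation is in estimating condition (iv): the paper's set $Y=W_{t,x}\cap F(x,-)^{-1}(C_t)\cap C_x$ gets its co-largeness from the fact that on $C_x\cap G$ the map $F(x,-)$ is the group translation $y\mapsto xy$ (a bijection of $G$), so no appeal to injectivity of $F_{\res U}$ or to your extra condition (ii) is needed; your variant, pulling back through the injective $F(b,-)_{\res U_b}$ via \textbf{(D4)}, is equally valid.
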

\begin{proof}
Let $s=F(t, x)$, where $(t,x)\in \Gamma^2\cap U$. Recall that $\ldim(V\sm W_{t,x})<k$. Let
$$Y=W_{t,x}\cap F(x, -)^{-1}(C_t)\cap C_x.$$
Then $\ldim (G\sm Y)< k$. Moreover, for every $y\in Y$, we have
$$F(F(t,x), y)=F(t, F(x,y))=t F(x, y)=txy.$$
That is, for $r=tx$, we obtain $Y\sub L_{s,r}$. It follows that $\ldim (G\sm L_{s,r})<k$.

The uniqueness of $r$ is clear, since otherwise we would obtain two sets $L_{s, r}$ and $L_{s, r'}$ both contained in $G$ and having co-dimension $<k$ in $G$, a contradiction.
\end{proof}

We now consider the map  $h:X\to G$ given by
$$h(t) = r \,\Lrarr\, \ldim(G\sm L_{t,r})<k \,\,\, (\text{equivalently, }  \ldim (V\sm L_{t,r})<k).$$

Recall that  for every $t\in X\sub V$, $\dim(V\sm U_t)<k$.

\begin{claim}\label{ginj}
$h$ is injective.
\end{claim}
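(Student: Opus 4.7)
The plan is to show that if $h(t_1)=h(t_2)=r$, then the two points $t_1$ and $t_2$ must coincide by exploiting injectivity of $F$ in the first coordinate on $U$.

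First, I would unpack the definition. By assumption, both $L_{t_1,r}$ and $L_{t_2,r}$ have co-dimension $<k$ in $V$ (equivalently, in $V\cap G$, since $\dim(G\triangle V)<k$). From Corollary \ref{collectF}, for every $t\in X\subseteq V$, the slice $U_t$ also has co-dimension $<k$ in $V$. Therefore the intersection
\[
D \;=\; L_{t_1,r}\cap L_{t_2,r}\cap U_{t_1}\cap U_{t_2}
\]
still has co-dimension $<k$ in $V$ by \textbf{(D2)}. In particular $\dim D = k$, so $D$ is nonempty; pick any $x\in D$.

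For such an $x$, on the one hand $F(t_1,x)=r\cdot x = F(t_2,x)$ because $x\in L_{t_1,r}\cap L_{t_2,r}$. On the other hand, $(t_1,x)$ and $(t_2,x)$ both lie in $U$ because $x\in U_{t_1}\cap U_{t_2}$. Since $F_{\res U}$ is injective in each coordinate (Corollary \ref{collectF}), the equality $F(t_1,x)=F(t_2,x)$ with $(t_1,x),(t_2,x)\in U$ forces $t_1=t_2$.

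I do not expect any serious obstacle here: the lemma is essentially a bookkeeping exercise combining the two co-dimension estimates for $L_{t_i,r}$ and $U_{t_i}$ with the coordinate-wise injectivity of $F$ on $U$. The only subtle point to double-check is that we are allowed to compare co-dimensions in $V$ versus in $V\cap G$, which is harmless because $V$ and $G$ agree up to a set of dimension $<k$.
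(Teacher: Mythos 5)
Your argument matches the paper's proof exactly: intersect $L_{t_1,r}$, $L_{t_2,r}$, $U_{t_1}$, $U_{t_2}$ (all of co-dimension $<k$ in $V$), pick $x$ in the nonempty intersection, and apply injectivity of $F_{\res U}$ in the first coordinate. Correct and essentially identical.
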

\begin{proof}
Suppose that for $t, s\in X$, we have $h(t)=h(s)=r$. Then $\ldim(V\sm L_{t,r})<k$ and $\ldim(V\sm L_{t,s})<k$. Therefore,
$$U_t\cap U_s\cap L_{t, r}\cap L_{s, r}\ne \es.$$
For $x$ in that intersection, we have $F(t,x)=rx=F(s,x)$, and by injectivity of $F_{\res U}$ in the first coordinate, $t=s$.
\end{proof}


\begin{claim}\label{gGamma}
  For every $t\in \Gamma$, $\ldim(G\sm L_{t,t})<k$. In particular, $h_{\res X\cap \Gamma}=id$.
\end{claim}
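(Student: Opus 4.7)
The plan is to exhibit an explicit large subset of $L_{t,t}$ using the fact that $F$ agrees with $\cdot$ on $C$. Let $t \in \Gamma$. By the structure of $C$ from Lemma \ref{firststep}, for every $x \in C_t$ we have $(t,x) \in C$, and therefore $F(t,x) = t \cdot x$. This says precisely that $x \in L_{t,t}$, so $C_t \subseteq L_{t,t}$.

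It remains to bound $\dim(G \setminus C_t)$. Since $C_t \subseteq V \cap G$, we can write
\[
G \setminus C_t = (G \setminus V) \cup \bigl((V \cap G) \setminus C_t\bigr) \subseteq (G \setminus V) \cup (V \setminus C_t).
\]
By the properties fixed after Lemma \ref{firststep}, $\dim(G \setminus V) < k$ (from $\dim(G \triangle V) < k$) and $\dim(V \setminus C_t) < k$. Applying \textbf{(D2)} gives $\dim(G \setminus C_t) < k$, and hence $\dim(G \setminus L_{t,t}) \le \dim(G \setminus C_t) < k$, which is the first assertion.

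For the second assertion, take $t \in X \cap \Gamma$. By the first part, $\dim(G \setminus L_{t,t}) < k$, so $r = t$ satisfies the defining condition of $h(t)$. The uniqueness clause established just before Claim \ref{ginj} then forces $h(t) = t$.
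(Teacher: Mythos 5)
Your proof is correct and takes essentially the same approach as the paper's: show $C_t \subseteq L_{t,t}$ using that $F$ agrees with $\cdot$ on $C$, then appeal to $\dim(G\setminus C_t)<k$. The only difference is that you re-derive $\dim(G\setminus C_t)<k$ from the decomposition $G\setminus C_t \subseteq (G\setminus V)\cup(V\setminus C_t)$, whereas the paper simply cites this as part of the note recorded immediately after Lemma \ref{firststep}.
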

\begin{proof}
Let $t\in \Gamma$. Then for every $x\in C_t$, $F(t,x)=t x$. 
So, $C_t\sub L_{t,t}$. Since $\ldim (G\sm C_t) <k$, the result follows.
\end{proof}

Since $\dim (V\sm X)<k$, we have $\dim (G\sm X)<k$. Since also $\dim (G\sm \Gamma)<k$, we obtain $\dim(G\sm X\cap \Gamma)<k$. Therefore, by Claim \ref{gGamma}, $\dim (G\sm h(X))<k$.

$ $\\
\noindent\textbf{Step V: Concluding the proof of Theorem \ref{main_chunk}.}\\



It remains to show the following statement.

\begin{lemma}\label{Fhom} There is an $\cal L$-definable set $Z\sub X^2$ with $\dim(X^2\sm Z)<2k$ and $F(Z)\sub X$,  such that for every $(t,x)\in Z$,
$$h(F(t,x))=h(t)h(x).$$
\end{lemma}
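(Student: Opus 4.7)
The plan is to take
$$Z \;:=\; \{(t,x) \in X^2 \cap U : F(t,x) \in X\},$$
which is manifestly $\cal L$-definable (as $X$, $U$, and $F$ all are) and satisfies $F(Z) \sub X$ by construction. Two things remain: the dimension bound $\dim(X^2 \sm Z) < 2k$, and verifying the equation on $Z$.

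For the dimension bound I will decompose
$$X^2 \sm Z \;\sub\; (X^2 \sm U) \;\cup\; \{(t,x) \in X^2 \cap U : F(t,x) \notin X\}.$$
The first piece has dimension $<2k$ because $\dim(V^2 \sm U) < 2k$. For the second I intersect with $C$: on $C$, $F$ coincides with $\cdot$, so for each $t \in \Gamma$ the $t$-fiber $\{x \in C_t : tx \in V \sm X\}$ sits inside the left translate $t^{-1}((V \sm X) \cap G)$, which has dimension $<k$ by \textbf{(D4)} together with $\dim((V \sm X) \cap G) < k$ (which follows from $\dim(V \sm X) < k$ and $\dim(V \triangle G) < k$). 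Summing over $\Gamma$ via \textbf{(D3)} yields dimension $<2k$, while $\dim(U \sm C) < 2k$ takes care of the part off $C$.

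For the equation, fix $(t,x) \in Z$ and set $r := h(t)$, $s := h(x)$. My strategy is to exhibit a subset of $V$ of co-dimension $<k$ that is contained in $L_{F(t,x),\, rs}$; by uniqueness in the definition of $h$ this will force $h(F(t,x)) = rs$. I define
$$B_{t,x} \;:=\; L_{x,s} \;\cap\; s^{-1}L_{t,r} \;\cap\; W_{t,x},$$
where $s^{-1}L_{t,r} := \{y \in G : sy \in L_{t,r}\}$. Each of the three factors has co-dimension $<k$ in $V$: the sets $L_{x,s}$ and $L_{t,r}$ by the defining property of $h$; the translate $s^{-1}L_{t,r}$ by \textbf{(D4)} applied to left multiplication by $s$ in $G$, combined with $\dim(V \triangle G) < k$ to pass from $G$ back to $V$; and $W_{t,x}$ by the last bullet of Corollary \ref{collectF}, since $(t,x) \in U$. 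Hence $\dim(V \sm B_{t,x}) < k$. For any $y \in B_{t,x}$ I then compute
$$F(F(t,x),\,y) \;=\; F(t,\, F(x,y)) \;=\; F(t,\, sy) \;=\; r(sy) \;=\; rsy,$$
using $(t,x,y) \in W$ (associativity of $F$) for the first equality, $y \in L_{x,s}$ for the second, and $sy \in L_{t,r}$ for the third. Thus $B_{t,x} \sub L_{F(t,x),\, rs}$, whence $\dim(V \sm L_{F(t,x),\, rs}) < k$, forcing $h(F(t,x)) = rs = h(t)\,h(x)$.

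The hard part will be isolating the correct witness $B_{t,x}$: the associativity set $W_{t,x}$ mediates the exchange between $F(F(t,x),y)$ and $F(t,F(x,y))$, while $L_{x,s}$ and $s^{-1}L_{t,r}$ convert $F$ into the group operation via $h$. Once this is in place, the verification is mechanical accounting against the dimension axioms, the translation-invariance of $\dim$ on $G$ coming from \textbf{(D4)}, and the estimates on $V \triangle G$, $V \sm X$, $V \sm W_{t,x}$ carried over from Step I and Corollary \ref{collectF}. With Lemma \ref{Fhom} proved, Theorem \ref{main_chunk} (and hence Theorem \ref{main}) follows by feeding $(X, Z, F, h)$ into Theorem \ref{chunk}.
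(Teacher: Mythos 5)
Your $Z$ coincides with the paper's, and both halves of the argument run along the same lines, with cosmetic differences. For the equation, your $B_{t,x}$ swaps the paper's factor $F(x,-)^{-1}(D_t)$ for the translate $s^{-1}L_{t,r}$ (the two agree on $D_x = L_{x,s}$, where $F(x,y)=sy$), and you conclude via the uniqueness clause in the definition of $h$ rather than by also intersecting with $D_{F(t,x)}$ and cancelling $y$ at the end; both are clean, and your version is marginally leaner. For the dimension bound the paper is more direct: for each $t\in\Gamma\cap X$ one has $F(t,\Gamma\cap U_t)\sub F(\Gamma^2\cap U)=X$ by the very definition of $X$, so $\bigcup_{t\in\Gamma\cap X}\{t\}\times(\Gamma\cap U_t)$ already sits inside $F^{-1}(X)$ with co-dimension $<2k$ in $X^2$, with no appeal to $C$ or to the group operation. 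Your detour through $C$ also works but contains a slip: for $(t,x)\in C$ the product $tx=F(t,x)$ lies in $G$, which need not lie in $V$, so the fiber you must bound is $\{x\in C_t : tx\in G\sm X\}$, not $\{x : tx\in V\sm X\}$ — the latter misses the case $tx\in G\sm V$. The repair is immediate, since $G\sm X\sub (G\sm V)\cup(V\sm X)$, both of dimension $<k$, so $t^{-1}(G\sm X)$ has dimension $<k$ by \textbf{(D4)}, but as written the fiber is not the correct set.
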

\begin{proof} We let
$$Z=F^{-1}(X)\cap X^2\cap U.$$
Clearly, $F(Z)\sub X$. We prove $\dim (X^2\sm Z)<2k$. Recall that $X\sub V$, 
and hence $\dim (X^2\sm U)<2k$. So, it suffices to prove  that $\dim (X^2\sm F^{-1}(X))<2k$.  Let $t\in X\sub V$. Since $\dim(V\sm (\Gamma\cap U_t))<k$, we have  $\dim(X\sm (\Gamma \cap U_t))<k$. Let $\Gamma'=\Gamma \cap X$. Then for every $t\in \Gamma'$, $\dim (X\sm (\Gamma\cap U_t))<k$, and
$$F(t, \Gamma \cap U_t)\sub F(\Gamma^2 \cap U)=X.$$
Hence, the set
$$\bigcup_{t\in \Gamma'} \{t\} \times (\Gamma \cap U_t)$$
belongs to $F^{-1}(X)$ and has co-dimension $<2k$ in $X^2$, as required.

Now let $(t, x)\in Z$. For  $y\in X$, denote $D_y=L_{y, h(y)}$. So $\ldim(V\sm D_y)<k$. By injectivity of $F_{\res U}$ in the second coordinate, $F(x, -)^{-1}(D_t)$ has co-dimension $<k$ in $V$.
Hence the set
$$Y= D_{F(t,x)}\cap W_{t,x}\cap  F(x, -)^{-1}(D_t) \cap D_x$$ 
is non-empty. Take any $y\in Y\sm \{1\}$. Then
$$h(F(t,x)) y= F(F(t,x), y)=F(t,F(x,y))=h(t) F(x,y)=h(t)h(x)y,$$
and hence
$h(F(t,x))=h(t)h(x)$, as required.
\end{proof}


This ends the proof of Theorem \ref{main_chunk}, and hence, by Theorem \ref{chunk}, also that of Theorem \ref{main}.



\section{Expansions by dense independent sets}\label{sec-indep}


In this section, we let  $\cal M=\la M, <, +, \dots\ra$ be an o-minimal expansion of an ordered group, $P\sub M$  a dense $\dcl$-independent set, and $\cal N=\la \cal M, P\ra$. We let $\dim$ be the large dimension coming from \cite{egh}, as described in Section \ref{sec-egh}. Note that the assumption that $\cal M$ expands a group is not due to any reasons pertaining the current work, but only because the accounts \cite{dms2} and \cite{egh} that analyze this pair work under it.

Theorem \ref{main2} will follow from Theorem \ref{main} and the following theorem.

\begin{theorem}\label{indsl}
Every definable group is definably isomorphic to a strongly large group.
\end{theorem}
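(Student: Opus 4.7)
The plan is to use the cone decomposition theorem from \cite{egh} to construct a definable bijection from $G$ onto a strongly large subset sitting inside a low-dimensional $\cal L$-definable set, and then transfer the group structure. Write $k := \dim G$.

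First, I would apply the cone decomposition from \cite{egh} (cf.\ the discussion in Section \ref{sec-egh}) to write $G$ as a finite disjoint union $G = \bigsqcup_{i=1}^{m} C_i$ of cones, each of large dimension $d_i \leq k$. Using the $\dcl$-independence of $P$, I would obtain for each cone $C_i$ a definable bijection $\phi_i : C_i \to D_i$, where $D_i \sub M^{d_i}$ is the image of a supercone under an $\cal L$-definable continuous injection and is therefore full in $M^{d_i}$ by Fact \ref{exa-hJ}. The $\dcl$-independence of $P$ is essential here: it permits one to collapse the small $P$-parametric part of each cone to a single tuple, which can then be absorbed into the parameters of the defining $\cal L$-formula.

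Next, I would fix distinct tags $a_1, \ldots, a_m \in M$ and assemble the $\phi_i$'s into a single definable injection $h : G \to M^{k+1}$ defined by, for $x \in C_i$,
\[ h(x) = (\phi_i(x), 0, \ldots, 0, a_i), \]
padding the $d_i$ coordinates of $\phi_i(x)$ with $k - d_i$ zeros and placing $a_i$ in the last slot. The distinctness of the tags guarantees pairwise disjointness of the images $h(C_i)$, so $h$ is injective. The image $G' := h(G)$ lies inside the $\cal L$-definable set $M^k \times \{a_1, \ldots, a_m\}$, whose large dimension equals its o-minimal dimension $k$ by \textbf{(D5)}. Monotonicity of $\dim$ then yields $\dim cl(G') \leq k$, while $\dim cl(G') \geq \dim G' = \dim G = k$, so $G'$ is strongly large. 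Transporting the group operation of $G$ to $G'$ via $h$ produces a strongly large group definably isomorphic to $G$.

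The main obstacle is the first step: extracting, from the cone decomposition of \cite{egh}, the definable bijections $\phi_i$ from each cone onto a full subset of $M^{d_i}$. This is where $\dcl$-independence of $P$ does real work, allowing one to treat the $P$-parametric component of a cone as a definable tuple rather than a genuinely higher-ambient-dimension set. Once these bijections are in place, the tag-based combination and verification of strong largeness are routine.
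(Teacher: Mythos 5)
Your proposal does not use the group structure of $G$ at all; it attempts to show that \emph{every} definable set of dimension $k$ is in definable bijection with a strongly large set. That statement is false. Take $X = P$ itself, a definable set of dimension $0$: a strongly large set of dimension $0$ would have to satisfy $\dim \cl(X) = 0$, i.e.\ be finite, yet $P$ is infinite and no definable bijection can fix that. So no correct proof of Theorem~\ref{indsl} can avoid engaging with the group multiplication.

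The specific step that fails is the claim that each cone $C_i = h(\mathcal{J}_i)$, with $\mathcal J_i = \bigcup_{g\in S_i}\{g\}\times J_g$ and $S_i\subseteq P^{m_i}$, admits a definable bijection onto a full subset $D_i\subseteq M^{d_i}$. When $S_i$ is infinite, the cone is an infinite, small-parametrized union of the sets $h(g,J_g)$, and there is no way to ``absorb'' $S_i$ into the parameters of a single $\mathcal L$-formula: $\dcl$-independence of $P$ makes single tuples from $P$ behave generically, but it does not let you encode an infinite definable subset of $P^m$ by a tuple. In fact the paper's own Lemma~\ref{embedcone} and Corollary~\ref{embedX} show what \emph{can} be achieved without the group structure: a $k$-cone embeds into $M^{k+1}$ with closure of dimension $k+1$ --- one dimension too many for strong largeness. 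When $S_i$ is finite, the cone really is a finite union of full sets $h(g,J_g)$ (Remark~\ref{hJfull}), and your tag-and-pad construction would then work; but that is exactly the easy case. The genuine content of the theorem is ruling out an infinite-$S$ cone, and the paper does this by a contradiction argument that uses the group operation: choosing a cone with $S\subseteq P^m$ infinite and $m$ maximal, picking $g_1,g_2\in S$ with all $2m$ coordinates distinct, applying the decomposition to the product $h(g_1,t_1)\cdot h(g_2,t_2) = h'(g_3,t_3)$, and showing that some coordinate of $g_2$ would be forced into $\dcl(g_1,g_3,t_1,t_3,A_0,P)$, contradicting $\dcl$-independence of the chosen witnesses over $P$. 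That interaction between multiplication and independence is the missing idea in your proposal.
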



The rest of this section is devoted to proving Theorem \ref{indsl}.  The proof  is based on the cone decomposition theorem from \cite{egh}. The terminology of Section \ref{sec-egh} applies here. A simplified formulation of the cone decomposition theorem is that every definable set $X\sub M^n$ is a small union of sets of the form $h(J)$, where $J$ is a supercone in some $M^k$, and $h:J \to M^n$  is an \cal L-definable continuous injective map. However, one can achieve some uniformity in the above decomposition, by stocking the different $J$'s into finitely many families of supercones, each in a fixed $M^k$, and extending $h$ to every such family $\cal L$-definably and continuously.
For $\cal J=\bigcup_{g\in S}\{g\}\times J_g$, we write $S=\pi(\cal J)$.

\begin{defn}[Cones]\label{def-cone}
A set $C\sub M^n$ is a \emph{$k$-cone}, $k\ge 0$,  if there is a definable set   $\cal J=\bigcup_{g\in S}\{g\}\times J_g$, where $S\sub P^m$ and every $J_g\sub M^k$ is a supercone, and an $\cal L$-definable continuous map $h:V \subseteq M^{m+k}\to M^n$, where $V$ is cell, such that
\begin{enumerate}
\item for every $g\in S$, $V_g=sh(J_g)$,
\item $C=h(\cal J)$,
\item $h:\cal J\to M^n$ is injective.
\end{enumerate}
A \emph{cone} is a $k$-cone for some $k$.
\end{defn}


\begin{remark}\label{hJfull}
It is important to note that if $C=h(\cal J)$ is a cone as above, then for every $g\in S$, the set $h(g, J_g)$ is a full set (as in Section \ref{sec-sl}). Indeed, the map $h(g, -) : sh(J_g)\to M^n$ is $\cal L$-definable continuous and injective. By Fact \ref{exa-hJ}, $h(g, J_g)$ is a full set.
\end{remark}

\begin{fact}[Cone decomposition theorem]\label{conedec} Every definable set is a finite disjoint union of cones.
\end{fact}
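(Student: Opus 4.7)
The plan is to prove Fact \ref{conedec} by a structural analysis that reduces an arbitrary definable set to a normal form via near-model completeness, then peels off supercone structure one coordinate at a time using the density and $\dcl$-independence of $P$.

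First, by condition (II) of Section \ref{sec-egh}, every definable set $X \sub M^n$ is a Boolean combination of sets of the form $\{x \in M^n : \exists g \in P^m,\, (g,x) \in Y\}$ for $Y \sub M^{m+n}$ an $\cal L$-definable set. Using the fact that finite unions, intersections, and complements of cones can themselves be broken up into disjoint unions of cones (via a common $\cal L$-cell refinement of the ambient $\cal L$-definable hulls), it suffices to handle a single such existential set. By further $\cal L$-cell decomposition of $Y$ and separating the points $g \in P^m$ at which the fiber $Y_g$ drops in dimension, one may assume $Y$ is a single $\cal L$-cell $V$ with constant fiber dimension $k$ over the first $m$ coordinates, and that the projection to these coordinates has fibers $V_g$ that are open cells in $M^k$.

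The core step is to show that for each $g \in S := \pi(V) \cap P^m$, the fiber $X_g \sub V_g$ is a supercone with shell $V_g$, and that this fibering assembles into a cone via an $\cal L$-definable continuous map $h$. This proceeds by induction on $k$: peel off the outermost coordinate of $V_g$, which ranges over an open interval; the inductive hypothesis applied to the $\cal L$-definable projection shows that the fibers over generic points of that interval are themselves supercones of dimension $k-1$. What remains is to verify that the fiber of $X_g$ over a generic point of the interval is co-small in $V_g$, and this is exactly where $\dcl$-independence of $P$ enters: any $\cal L$-definable relation forcing that fiber to be non-co-small on a $k$-dimensional set would supply an $\cal L$-definable dependence among the coordinates of $g$ and one additional $P$-element, contradicting independence. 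The bounding functions $h_1, h_2$ required by Definition \ref{def-supercone} are recovered as coordinate projections of the $\cal L$-cell $V$ and are therefore uniformly $\cal L$-definable in $g$, as required by condition (III) of Section \ref{sec-egh}.

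The main obstacle is simultaneously achieving disjointness and the injectivity condition (3) of Definition \ref{def-cone} throughout the Boolean-combination reduction. Disjointness is handled by taking a common $\cal L$-cell refinement of the ambient cells carrying the finitely many cones that appear and restricting each cone to its piece in the refinement; injectivity of the corresponding $h$ is preserved because at each stage it is a restriction of a composition of coordinate projections and $\cal L$-definable continuous bijections between cells. The secondary induction on Boolean complexity used in the reduction step requires the same refinement procedure and is the most technically delicate bookkeeping, but conceptually parallels the refinement step in o-minimal cell decomposition.
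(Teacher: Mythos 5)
The paper's ``proof'' of Fact~\ref{conedec} is a citation: it appeals directly to the cone decomposition theorem of \cite{egh}, its refinement in \cite{egh2}, and a detailed write-up in \cite[Fact 4.7]{el-pw}. Your proposal, by contrast, sketches a self-contained argument, so a direct comparison of strategies is not really possible. What I can do is point out where your sketch would break down as a genuine proof.

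The most serious gap is in the ``core step.'' You claim that for a generic interval coordinate, the fiber of $X_g$ is co-small in $V_g$, and you justify this by saying that a non-co-small fiber ``would supply an $\cal L$-definable dependence among the coordinates of $g$ and one additional $P$-element, contradicting independence.'' This is not a correct inference. A non-co-small fiber simply means that the complement of $X_g$ inside $V_g$ is large; it says nothing about any $\cal L$-definable relation between elements of $P$. Indeed, the cone decomposition theorem decomposes a definable set into \emph{several} cones precisely because the fibers need not be co-small in a single $\cal L$-definable cell --- the complement can itself be a large set that must be captured by further cones, possibly fibered over additional $P$-parameters. Your argument, if it worked, would prove that every definable set is a \emph{single} cone whose fibers are supercones with fixed shells, which is false (consider the union of two disjoint cones with incompatible shells, or a set whose fiber over $g\in P$ is an interval missing a second $P$-indexed family of points). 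The actual proofs handle this by a considerably more delicate analysis that introduces new $P$-parameters when fibers fail to be co-small, not by deriving a contradiction.

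A secondary but real concern: you invoke $\dcl$-independence of $P$ as a load-bearing hypothesis, but the theorem being cited is established in \cite{egh} for the general setting of conditions (I)--(III), which also covers dense pairs and Mann pairs where $P$ is a field or a Mann group and is very far from being $\dcl$-independent. Within Section~\ref{sec-indep} of this paper $P$ is indeed independent, so a specialized proof is not automatically wrong, but you should be aware that your argument would not recover the general result, and the independence hypothesis is in fact not where the hard work lies. Finally, the reduction from Boolean combinations of cones back to disjoint unions of cones --- in particular showing that the \emph{complement} of a cone inside a cell is again a finite disjoint union of cones --- is one of the main technical lemmas of \cite{egh}, and the one-sentence appeal to ``a common $\cal L$-cell refinement'' does not address why the resulting pieces inherit the supercone fiber structure.
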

\begin{proof}
This is a consequence of the cone decomposition theorem in \cite{egh} and subsequent work in \cite{egh2}. A detailed proof is given in \cite[Fact 4.7]{el-pw}. In that reference the universe of $\cal M$ is assumed to be $\R$, but this played no role in the particular proof.
\end{proof}

We will need a further decomposition as follows.

\begin{claim}\label{partitionG} Every $k$-cone is a finite disjoint union of $k$-cones $h(\cal J)$, with $S=\pi(\cal J)$, such that: 
\begin{enumerate}
     \item   every $g\in S$ has all its coordinates distinct,
      \item $S$ is either finite, or every coordinate projection of $S$ is infinite.
\end{enumerate}
\end{claim}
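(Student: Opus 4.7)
The plan is to induct on the parameter dimension $m$ of the cone, handling (1) and (2) in sequence; enforcing (1) does not increase $m$, while enforcing (2), when it requires any action, strictly decreases $m$, so the induction closes. The base case $m=0$ is immediate, since then $S$ is at most a singleton and both conditions hold vacuously.

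To enforce (1), I would partition $S$ by the equality pattern of its coordinates: for each equivalence relation $E$ on $\{1,\ldots,m\}$ with $m_E$ classes and chosen representatives $i_1<\cdots<i_{m_E}$, let $S_E=\{g\in S: g_i=g_j\iff iEj\}$ and $S'_E=\pi_E(S_E)\subseteq P^{m_E}$, where $\pi_E$ projects onto the chosen representatives. Letting $\iota_E:M^{m_E}\to M^m$ be the $\cal L$-definable continuous duplication map inverting $\pi_E$ on its image, I set $h_E(g',x)=h(\iota_E(g'),x)$ on $V'_E=(\iota_E\times\operatorname{id})^{-1}(V)$, with $J'_{g'}=J_{\iota_E(g')}$. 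This gives a new cone description of $h(\bigcup_{g\in S_E}\{g\}\times J_g)$ in which every parameter tuple has distinct coordinates, establishing (1). To enforce (2), I inspect each resulting $S'_E$: if it is finite or every coordinate projection is infinite, we are done. Otherwise some projection $\pi_j(S'_E)=\{p_1,\ldots,p_l\}$ is finite, and I partition $S'_E=\bigsqcup_i\{g'\in S'_E: g'_j=p_i\}$. For each value $p_i\in P$, fixing the $j$-th coordinate to $p_i$ lets me drop that coordinate via an analogous $\cal L$-definable continuous insertion map, producing a cone whose parameter set lies in $P^{m_E-1}$; the remaining coordinates stay pairwise distinct, and are automatically distinct from $p_i$ since the original tuple was, so (1) persists, and the inductive hypothesis decomposes this smaller cone into cones satisfying both (1) and (2).

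The main obstacle is verifying that the reparameterized set $V'_E$, and its analogue in the step handling (2), qualifies as a cell in the sense of Definition \ref{def-cone}. Since it is only the preimage of a cell under a coordinate-duplication or coordinate-insertion map, it is \emph{a priori} just $\cal L$-definable, and the fiber $V_{\iota_E(g')}=sh(J'_{g'})$ could in principle be split across multiple cells in an arbitrary o-minimal cell decomposition of $V'_E$. The remedy is to apply o-minimal cell decomposition cylindrically over the first $m_E$ coordinates, so that each resulting cell has cells as fibers over its projection to $M^{m_E}$; this refines $V'_E$ into finitely many cells whose fibers over $g'\in S'_E$ remain exactly the shells $sh(J'_{g'})$. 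Restricting $h_E$ and $S'_E$ to each such cell yields one sub-cone in the final finite disjoint union, and all sub-cones simultaneously satisfy (1) and (2) by construction.
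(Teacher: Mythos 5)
Your proposal is correct and takes essentially the same route as the paper: both induct on the parameter dimension $m$, first enforce (1) by partitioning $S$ according to which coordinates coincide (the paper splits off the all-distinct part and handles one coincidence pattern at a time, while you index explicitly by the equality pattern $E$ — the same idea, just spelled out), and then enforce (2) by fixing each of the finitely many values of a finite coordinate projection and deleting that coordinate, dropping $m$ and invoking the inductive hypothesis. Your worry about whether the reparametrized $V'_E$ is a cell in the sense of Definition \ref{def-cone} is a legitimate point that the paper's proof passes over silently; just note that a cylindrical cell decomposition of $V'_E$ may \emph{split} the fiber $V'_{E,g'}$ rather than leave it ``exactly'' $sh(J'_{g'})$, so one should additionally cite \cite[Lemma 4.16]{egh} (a supercone intersected with an open cell is a supercone, whose shell is that cell) to redefine the supercones on each piece, at the cost of a finer finite decomposition.
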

\begin{proof}
We first show that every $k$-cone $C\sub M^n$ can be written as a finite disjoint union of $k$-cones satisfying (1). Let $C=h(\cal J)$, with $\cal J$ and $h:V \subseteq M^{m+k}\to M^n$, as in Definition \ref{def-cone}. We work by induction on $m$. For $m=1$, the result obviously holds. Let $m>1$, and consider the set $T\sub S$ of all those elements whose at least two coordinates are the same. Without loss of generality, assume that for every $g\in T$, the first two coordinates are the same (otherwise the argument is similar). It is easy to see that $h\left(\bigcup_{g\in S\sm T} \{g\}\times J_g\right)$ has the right form,
and hence we may assume that $T=S$.
Now, for $g\in T$, let $g'$ denote the $(m-1)$-tuple obtained from $g$ by removing the first coordinate.
Let $$T'=\{g'\in M^{m-1}: g\in T\}$$
and $$V'=(T'\times M^k)\cap V,$$
and define $\cal J'=\{J'_{g'}\}_{g'\in T'}$, where $J'_{g'}=J_g$, and $h':V'\to M^n$ with $h'(g', t)=h(g, t)$. 
Then $h(\cal J)=h'(\cal J')$, with  $T'\sub M^{m-1}$. By inductive hypothesis, the result follows.

Now, we show that every $k$-cone that satisfies (1) can be written as a finite disjoint union of sets satisfying (1) and (2). Let $C=h(\cal J)$ be as above. We work again by induction on $m$. If $m=1$, the result obviously holds. Let $m>1$, and suppose that some coordinate projection of $S$ is not infinite, say the first, $\pi_1(S)=\{t_1, \dots, t_l\}$. Then $\cal J$ is the finite disjoint union of $\cal J_i=\{J_{ig}\}_{g\in S_{t_i}}$, $i=1, \dots, l$, where $J_{ig}=J_{(t_i, g)}$. Let $h_i: V_{t_i} \to M^n$
with $h_i(g, x)=h(t_i, g, x)$. Then each $h_i(\cal J_i)$ is still a $k$-cone satisfying (1), and $S_{t_i}\sub M^{m-1}$. By inductive hypothesis, the result follows.
\end{proof}




We will also  need the following lemma.

\begin{lemma}\label{embedcone} Every $k$-cone can be definably embedded into $M^{k+1}$.
\end{lemma}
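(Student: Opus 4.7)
The plan is to definably embed the parametrization $\mathcal{J}=\bigcup_{g\in S}\{g\}\times J_g$ into $M^{k+1}$ and compose with the definable bijection $h^{-1}\colon C\to\mathcal{J}$. Since $\mathcal{J}\subseteq P^m\times M^k$, the task amounts to definably compressing the parameter set $S\subseteq P^m$ into a single $M$-coordinate. I would begin by applying Claim \ref{partitionG} to write the given $k$-cone as a finite disjoint union of cones each satisfying conditions (1) and (2) there, so that it suffices to embed each summand and then merge. Merging finitely many embeddings into a single one is possible because divisibility of the ordered group $\cal M$ allows us to definably split $M$ into arbitrarily many pieces each still in definable bijection with $M$, into which the individual images can be placed disjointly.

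For the finite case, enumerate $S=\{g^{(1)},\dots,g^{(l)}\}$; then $C$ is a disjoint union of $l$ pieces, each in $\cal L$-definable bijection with the supercone $J_{g^{(j)}}\subseteq M^k$ via $h(g^{(j)},-)$. Combining them via an extra discrete coordinate labeling $j$ yields an embedding into $\{1,\dots,l\}\times M^k\subseteq M^{k+1}$.

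The substantive case is when $S$ is infinite with every coordinate projection infinite and every $g\in S$ having distinct coordinates. Here the heart of the proof is to construct a definable injection $\sigma\colon S\to M$; the embedding of $\mathcal{J}$ is then $(g,t)\mapsto(\sigma(g),t)\in M\times M^k=M^{k+1}$. The natural candidate exploits the $\dcl$-independence of $P$: on strictly increasing tuples $(p_1<\dots<p_m)\in P^m$, the sum $p_1+\dots+p_m$ is injective, since any collision $\sum p_i=\sum q_i$ with the $p_i,q_j$ distinct elements of $P$ would yield $q_m\in\dcl(p_1,\dots,p_m,q_1,\dots,q_{m-1})$, a nontrivial $\cal L$-definable relation contradicting $\dcl$-independence. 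Partitioning $S$ according to the permutation $\tau$ that sorts each of its elements into increasing order produces $m!$ pieces $S_\tau$, on each of which the sum map is a definable injection into $M$.

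The main obstacle is merging these $m!$ partial injections $\sigma_\tau$ into a single definable injection $\sigma\colon S\to M$, since the sum is constant on orbits of the permutation action and the images of different $\sigma_\tau$ coincide on tuples that are permutations of one another. One plausible approach is to fix a parameter $c\in M$ and set $\sigma(g)=m!\cdot(g_1+\dots+g_m)+i_\tau\cdot c$, where $g\in S_\tau$ and $i_\tau\in\{0,\dots,m!-1\}$ indexes $\tau$; injectivity across different $\tau$ then requires that $c$ be chosen so as to avoid all differences $m!\cdot(\sum g_i-\sum g'_i)$ arising from $g,g'$ in different orbits, which can be arranged by taking $c$ suitably generic over the parameters defining $S$ and exploiting the density of $P$ to find $\cal L$-definable disjoint regions receiving the respective $\sigma_\tau$-images. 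Handling this compression step cleanly, under only the ordered group structure and $\dcl$-independence, is the most delicate point of the argument.
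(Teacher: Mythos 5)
Your approach is genuinely different from the paper's, and it is worth comparing the two. The paper's proof is a one-liner that does not go through Claim~\ref{partitionG} at all: by Fact~\ref{fact-indt} choose $\alpha_1,\dots,\alpha_m\in M$ that are $\dcl$-independent over $P$, and map $g=(x_1,\dots,x_m)\in S$ to $\alpha_1x_1+\dots+\alpha_mx_m\in M$. Any collision $\sum\alpha_i x_i=\sum\alpha_i y_i$ with $x\neq y$ would express some $\alpha_{i_0}$ over the remaining $\alpha_i$'s and over $P$, violating independence of the $\alpha$'s; so this map is injective on all of $S$ in one stroke, and $h(g,t)\mapsto(f(g),t)$ finishes the proof. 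The key conceptual difference is thus in \emph{where} the independence is put to use: the paper chooses the weights $\alpha_i$ to be independent over $P$, so that the unordered coordinates of $g$ never collide, whereas you keep the weights trivial (an unweighted sum) and rely on the independence of $P$ itself, which only separates $g$ from $g'$ when they are not permutations of one another. That is why you are forced to first normalize via Claim~\ref{partitionG}, then split $S$ into the $m!$ order-type pieces $S_\tau$, and then re-merge.

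The merging is where your proposal is not yet a proof. The formula $\sigma(g)=m!\cdot(g_1+\dots+g_m)+i_\tau\cdot c$ does work in principle, but the justification you sketch — ``exploiting the density of $P$ to find $\cal L$-definable disjoint regions receiving the respective $\sigma_\tau$-images'' — is not the right mechanism: the sets $\{m!\sum g_i : g\in S_\tau\}$ are dense (they sit inside the dense small set $\dcl(P)$), so they cannot be confined to disjoint $\cal L$-definable regions. What actually makes a good $c$ exist is a \emph{smallness} argument, not a density-of-$P$ argument: for each pair $\tau\neq\tau'$, the bad set $\bigl\{\tfrac{m!}{i_\tau-i_{\tau'}}\bigl(\sum g_i-\sum g'_i\bigr): g\in S_\tau,\ g'\in S_{\tau'}\bigr\}$ is a definable subset of $\dcl(P)$, hence small and with dense complement, and a finite union of such is still small; pick $c$ outside it. You should also state the within-$S_\tau$ injectivity of the sum more carefully (cancel the common coordinates of $g,g'$ first, then apply $\dcl$-independence to one of the leftover elements) rather than assuming the $p_i,q_j$ are all distinct. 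If you supply that smallness argument, your route does close; but it is considerably longer than the paper's, which avoids the decomposition, the sorting, and the merging entirely by building the independence into the coefficients of the linear form rather than into the arguments.
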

\begin{proof}
Let $X=h(\cal J)\sub M^n$ be a $k$-cone, with $\cal J=\bigcup_{g\in S}\{g\}\times J_g \sub P^{m+k}$ and $h:V\to M^n$ as in Definition \ref{def-cone}. We first embed $S$ into $M$. By Fact \ref{fact-indt}, there are $\alpha_1, \dots, \alpha_m\in M$ which are $\dcl$-independent over $P$. Define $f:S\to M$ via
$$(x_1, \dots, x_m)\mapsto \alpha_1 x_1 +\dots + \alpha_2 x_m.$$
By choice of $\alpha_1, \dots, \alpha_m$, it follows that $f$ is injective.
Now, since $h$ is injective, we can embed $X$ into $M^{k+1}$ via $F:h(g, t)\mapsto (f(g),t).$
\end{proof}


\begin{cor}\label{embedX}
Let $X$ be a definable set of dimension $k$. Then there is a definable bijection $f:X\to M^n$ with $\dim cl(f(X))=k+1$.
\end{cor}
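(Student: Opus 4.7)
The plan is to decompose $X$ into cones via Fact \ref{conedec}, embed each cone using Lemma \ref{embedcone}, and glue the embeddings together in disjoint slots inside a larger ambient space so that the total image has closure of dimension $k+1$.

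First, write $X = \bigsqcup_{i=1}^N C_i$ where each $C_i$ is a $k_i$-cone with $k_i \le k$; since $\dim X = k$, at least one $C_i$ satisfies $k_i = k$. By Claim \ref{partitionG} we may refine so that, for each $i$, the parameter set $S_i$ appearing in $\cal J_i = \bigcup_{g \in S_i}\{g\}\times J_g$ is either finite or has every coordinate projection infinite. Apply Lemma \ref{embedcone} to each $C_i$ to obtain a definable embedding $F_i : C_i \hookrightarrow M^{k_i+1}$. To combine the $F_i$, fix distinct elements $d_1,\dots, d_N \in M$ (obtained from Fact \ref{fact-indt} if one wants them $\dcl$-independent over $P$) and define $f : X \to M^{k+2}$ by $f|_{C_i}(x) = (F_i(x), 0, \dots, 0, d_i)$, padding to the common ambient dimension and reserving the last coordinate as a separator. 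Then $f$ is a definable injection and $cl(f(X)) = \bigcup_i cl(f(C_i))$, with each piece contained in a $(k_i+1)$-dimensional affine slot, hence of o-minimal dimension at most $k_i + 1 \le k+1$.

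The crucial point is to force $\dim cl(f(X)) = k+1$. For a $k$-cone $C_i = h_i(\cal J_i)$ whose $S_i$ has every projection infinite, the image $F_i(C_i) = \{(f_i(g), t) : g \in S_i,\, t \in J_g\}$ has closure of dimension $k+1$: each projection of $S_i \subseteq P^{m_i}$ is an infinite definable subset of $P$, hence dense in an interval of $M$ (by density of $P$), so the linear image $f_i(S_i) \subseteq M$ is dense in an interval; combined with the density of each $J_g$ in its $k$-cell shell $V_g = sh(J_g)$ (with the cell structure of $h_i$ producing a common open sub-box across $g$), the closure $cl(F_i(C_i))$ contains a $(k+1)$-dimensional open box.

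The main obstacle is the case in which every $k$-dimensional cone in the decomposition has finite $S_i$. In that situation, the straightforward embedding above produces only $k$-dimensional closures, so one must further manipulate these cones to extract an infinite parameter set: for a $k$-cone $C = h(g, J_g)$ with $J_g \subseteq M^k$ a supercone, refine $J_g$ by intersecting with a slice of the form $(P\cap I) \times M^{k-1}$ (using that $\pi_1(J_g)$ meets $P$ densely since $P$ is dense in $M$), thereby rewriting a portion of $C$ as a $k$-cone with an infinite, $P$-indexed parameter set of dimension one. Re-embedding this refined piece via Lemma \ref{embedcone} then yields a $(k+1)$-dimensional closure in its slot, and combining with the embeddings of the remaining lower-dimensional pieces gives the desired bound $\dim cl(f(X)) = k+1$.
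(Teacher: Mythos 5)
Your proof takes essentially the same route as the paper through the first two paragraphs: decompose $X$ into cones via Fact~\ref{conedec}, apply Lemma~\ref{embedcone} to each cone, and glue the embeddings into disjoint copies of $M^{k+1}$ (sitting in $M^{k+2}$ with a separator coordinate). The paper's actual proof is exactly this and then simply observes that the closure of the resulting union sits inside finitely many $(k+1)$-dimensional affine slots, hence has dimension at most $k+1$. The paper does \emph{not} undertake the density analysis of your second paragraph, nor the refinement of your third.

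The extra work in your third paragraph stems from reading the conclusion as a forced equality $\dim cl(f(X)) = k+1$. Be careful here: that equality is not attainable in general. If $X$ is finite (so $k=0$), then $f(X)$ is finite and closed, so $\dim cl(f(X))=0$, never $1$. What the downstream application (Corollary~\ref{slfull2}) actually uses is only the \emph{upper bound} $\dim cl(f(X)) \le k+1$ (applied to a set of dimension $<k$ so as to obtain closure of dimension $\le k$), and that upper bound is exactly what the disjoint-copies argument gives for free. Chasing the exact value is therefore unnecessary, and the paper does not do it.

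Moreover, the manipulation you propose for the finite-$S_i$ case is technically shaky. Slicing a supercone $J_g\subseteq M^k$ along the \emph{first} coordinate against $(P\cap I)\times M^{k-1}$ does not obviously produce supercone fibers: the recursive definition of a supercone projects onto the initial coordinates and fibers the last one, so it is the last-coordinate fibers, not the first-coordinate fibers, that are controlled by the definition. You would need a separate argument that the resulting family is again a cone. You would also then be re-indexing a $(k-1)$-dimensional fiber family over a $1$-dimensional $P$-indexed parameter set, which changes the cone's shape and discards the (co-small but $k$-dimensional) part of $C$ whose first coordinate lies off $P$, so the original cone is not faithfully recovered. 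None of this is needed once one observes that only the inequality $\le k+1$ is required.
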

\begin{proof}
By cone decomposition, $X$ is a finite union of cones. By Lemma \ref{embedcone}, each of the cones can be definably embedded into $M^{k+1}$.  Then $X$ can  be definably embedded into finitely many disjoint copies of $M^{k+1}$, say, in $M^{k+2}$. The dimension of the closure of their union is still $k+1$.
\end{proof}

\begin{cor}\label{slfull2}
Let $X$ be the union of a full set and a set of smaller dimension. Then $X$ is in definable bijection with a strongly large set.
\end{cor}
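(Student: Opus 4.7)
The plan is to reduce to the already-proved Corollary \ref{embedX}. Write $X = F \cup S$ with $F$ full of dimension $k := \dim F$ and $\dim S < k$, and replace $S$ by $S \sm F$ (still of dimension $< k$ by monotonicity) so that the union $X = F \sqcup (S \sm F)$ is disjoint. If $k = 0$, then by \textbf{(D1)} the set $S \sm F$ is empty and $X = F$ is already strongly large, since fullness of $F$ gives
\[
\dim cl(F) = \max\bigl(\dim F,\ \dim(cl(F) \sm F)\bigr) = k,
\]
and the identity map suffices. So I may assume $k \ge 1$, and set $j := \dim(S \sm F) \le k-1$.

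The key move is to apply Corollary \ref{embedX} to $S \sm F$ to obtain a definable injection $\phi : S \sm F \to M^n$ (for some $n$) whose image satisfies $\dim cl(\phi(S \sm F)) = j+1 \le k$. This is the crucial control, since without shrinking, $cl(S)$ could have dimension exceeding $k$ (for instance if $S$ is small but dense in some large $\cal L$-definable set), which is precisely the obstruction to $X$ being strongly large on the nose. I then place $F$ and $\phi(S \sm F)$ into disjoint horizontal slices of a common ambient $M^N$, padding both sets with zero coordinates and attaching two distinct constants $a \ne b \in M$ in a final coordinate. Calling the resulting union $Y$, I define $g : X \to Y$ by sending $x \in F$ to $(x,0,\dots,0,a)$ and $x \in S \sm F$ to $(\phi(x),0,\dots,0,b)$; this is a definable bijection.

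It remains to verify that $Y$ is strongly large. By invariance of dimension under definable bijection, $\dim Y = \max(k, j) = k$. Since adjoining constant coordinates does not change dimensions of closures, and the final coordinate separates the two pieces so that $cl(Y)$ is the disjoint union of the closures of the two slices, we get
\[
\dim cl(Y) = \max\bigl(\dim cl(F),\ \dim cl(\phi(S \sm F))\bigr) = \max(k,\ j+1) = k,
\]
using fullness of $F$ for $\dim cl(F) = k$. Combined with the monotonicity bound $\dim cl(Y) \ge \dim Y = k$, this yields $\dim Y = \dim cl(Y)$, so $Y$ is strongly large. There is no real obstacle here once Corollary \ref{embedX} is in hand; the only point of care is the ambient-space bookkeeping, which is routine because a constant coordinate and a discriminator coordinate preserve all the relevant dimensions of a set and of its closure.
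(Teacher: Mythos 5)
Your proof is correct and follows essentially the same route as the paper: apply Corollary \ref{embedX} to the low-dimensional remainder $S$ (so its closure has dimension at most $k$), keep the full piece as is, and place the two disjointly in a common ambient space. The paper's version is terser but identical in substance; your only additions are the explicit disjointification, the $k=0$ degenerate case, and the bookkeeping of padding/discriminator coordinates, all of which are fine.
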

\begin{proof} Suppose $\dim X=k$ and $X=Y\cup S$, with $Y$ full and $\dim S<k$. Since $\dim cl(Y)=k$, and using Corollary \ref{embedX} for $S$, we can easily embed $X$ into some $M^n$ via an $f$, such that $\dim cl( f(X))=k$. Therefore $f(X)$ is strongly large.
\end{proof}

Note that we only used that $Y$ is strongly large in the above proof.

\begin{proof}[Proof of Theorem \ref{indsl}] Let  $G=\la G, \cdot, 1\ra$ be   a definable group, with $G\sub M^n$ and $\ldim G=k$. For $a, b\in G$, we write $ab$ for $a\cdot b$. By Fact \ref{conedec} and Claim \ref{partitionG}, $G$ is a finite union of cones $C_1, \dots, C_p$, each of the form $h(\cal J)$, where $S=\pi(\cal J)$ satisfies Claim \ref{partitionG} (1) \& (2). Suppose towards a contradiction that $G$ is not in definable bijection with any strongly large set. We claim that some $k$-cone $C$ among $C_1, \dots, C_p$ must be of the form $h(\cal J)$, where, in addition, $S=\pi(\cal J)$ is infinite. Indeed, otherwise, $G$ would be the union of finitely many sets of the form $h(g, J_g)$ as in Remark \ref{hJfull}, together with a set of dimension $<k$. The former sets are all full, and, by  Lemma \ref{fullunion}, their union is also full. Hence $G$ is a union of a full set and a set of dimension $<k$, contradicting Corollary \ref{slfull2}.

Now fix a $k$-cone $C=h(\cal J)$ among $C_1, \dots, C_p$, with $S=\pi(\cal J)\sub P^m$ infinite and $m$ maximal such. 
By Claim \ref{partitionG}(1) \& (2), we can find two distinct elements $g_1, g_2\in S$ with all their $2m$ coordinates distinct.
Let $\Sigma$ be the set of all those $2m$ coordinates.
Let also $A\sub M$ be a finite parameter set that is used to define all cones $C_i=h_i(\cal J_i)$ and their associated functions $h_i$ and families of supercones $\cal J_i$.  Let $A_0\sub A$ be so that $A\sub \dcl(A_0 P)$ and $A_0$ is $\dcl$-independent over $P$.

$ $\\
\textbf{Case: $k=0$.} Since all of $C_1, \dots, C_p$ are $0$-cones, we may write $C_i=h_i(S_i)$, where $S_i\sub P^{k_i}$, for some $l_i\le m$. Let $i$ and $g_3\in S_i$ be so that
$$h(g_1) h(g_2) = h_i (g_3).$$
Since $|\Sigma|=2m$, there must be $a\in \Sigma \sm g_3$. Say $a\in g_2\sm g_1$ (if $a\in g_1\sm g_2$, the argument is symmetric). By injectivity of $h$,  $a\in \dcl(g_1, g_3, A_0, P_0)$, contradicting the fact that $A_0$ is $\dcl$-independent over $P$.


$ $\\
\textbf{Case: $k>0$.}  We need the following claim.

$ $\\
\noindent\textbf{Claim.} \emph{There are a $k$-cone  $D=h'(\cal J')$ among the $C_i$'s, with $\cal J'=\bigcup_{g\in S'} \{g\}\times J'_g$, a tuple $g_3\in S'$, and a  triple $(t_1, t_2, t_3)\in J_{g_1}\times J_{g_2}\times J_{g_3}$, such that $t_1 t_3 A_0 P$ is $\dcl$-independent, and}
$$ (*)\,\,\,\,\,\,\,
  h(g_1, t_1) h(g_2, t_2)= h'(g_3, t_3).$$
\begin{proof}[Proof of the claim]
Let $X=J_{g_1}\times J_{g_2}$. Then $X$ is a supercone in $M^{2k}$. By \textbf{(D6)}, there is a definable set $Z\sub X$ with $\ldim Z<2k$ and an $\cal L$-definable map $F:M^{2k}\to M^n$ such that the map $$(t_1, t_2)\mapsto h(g_1, t_1)h(g_2, t_2)$$ agrees with $F$ on $X':=(J_{g_1}\times J_{g_2})\sm Z$. By o-minimality, there is an open cell $U\sub cl(X)$, such that $F_{\res U}$ is continuous. By \cite[Lemma 4.16]{egh}, $U\cap X$ is a supercone in $M^{2k}$, and hence the set $T=(U\cap X)\sm Z$ also  has dimension $2k$. By Fact \ref{fact-indt}, there is $(t_1, t_2)\in T$, which is $\dcl$-independent over $A_0 P$.  Moreover, by \cite[Lemma 5.10]{egh}, there are a $k'$-cone $D$ among the $C_i$'s,  say $D=h'(\cal J')$, with $k'\le k$ and $\cal J'=\bigcup_{g\in S'} \{g\}\times J'_g$, and  $g_3\in S'$ such that $F(T)\sub h'(g_3, J'_{g_3})$. Let $t_3\in J'_{g_3}\sub M^{k'}$ be so that
$$h(g_1, t_1) h(g_2, t_2)=h'(g_3, t_3).$$
Since $t_2\in \dcl(t_1, t_3, A_0, P)$, it follows that $(t_1, t_3)$ is also $\dcl$-independent over $A_0 P$ and has dimension at least $2k$. Hence $k'=k$. 
\end{proof}
Let $D$ and $g_3$ be as in the claim.  By maximality of $m$, it must be that $g_3\in S'\sub P^l$, for some $l\le m$. Since all $2m$ coordinates of $g_1, g_2$ are distinct, there must be $a\in \Sigma\sm g_3$. Say $a\in g_2\sm g_1$.
By (*),
$$h(g_2, t_2)=h(g_1, t_1)^{-1} h'(g_3, t_3),$$
and hence
$$a\in g_2\sub \dcl(g_1, g_3, t_1, t_3, A_0, P_0),$$
contradicting  the fact that $t_1 t_2 A_0$ is $\dcl$-independent over $P$. 
\end{proof}

\section{A future direction}\label{sec-future}





 There are many tame expansions of o-minimal structures that support a nice notion of dimension, and hence where at least the methods of this paper could apply. 
Examples include real closed valued fields, closed ordered differential fields, expansions by a discrete set, expansions by a generic set, and $H$-structures. Here we point out a new direction which has not yet  been considered. For all relevant notions of NIP structures, the reader may consult \cite{simon-book} or \cite{nell}.  Assume that $\cal M$ is a distal structure, and let $\cal N$ be an expansion of $\cal M$, which is NIP, but not distal. This is the case, for example, with all pairs $\la \cal M, P\ra$  mentioned in Section \ref{sec-egh} (see \cite{hn}). Define the \emph{distal closure} operator $\dscl: \cal P(M)\rightarrow \cal P(M)$ as follows:
\[
a\in \dscl(A) \Lrarr \text{ $tp(a/A)$ is distal.}
\]
Work from \cite{nell} implies that for a dense pair of real closed field, a type $tp(a/A)$ is small (that is, it contains a small formula) if and only if $tp(a/A)$ is distal. Combined with work from \cite{egh}, we obtain that $\dscl$ in this setting is a pregeometry, and that the corresponding $\dscl$-dimension coincides with the large dimension (as in Section \ref{sec-egh}). The proposed direction is to explore  further expansions  \cal N where  $\dscl$ is a pregeometry, and, if $\cal M$ is o-minimal,  to check whether axioms \textbf{(D1)-(D6)} for the $\dscl$-dimension hold.






\begin{thebibliography}{999999}

\bibitem{bm} E. Baro, A. Martin-Pizarro, {\em Small groups in dense pairs}, Preprint (2018).

\bibitem{bz} O. Belegradek, B. Zilber, {\em The model theory of the field of reals with a subgroup of the unit circle}, J. Lond. Math. Soc. (2) 78 (2008) 563--579.

\bibitem{beg} A. Berenstein, C. Ealy and A. G\"{u}naydin, {\em Thorn independence in the field of real
numbers with a small multiplicative group}, Annals of Pure and Applied Logic 150 (2007), 1--18.


\bibitem{bv1} A. Berenstein, E. Vassiliev, \emph{Geometric structures with a dense independent subset}, Selecta Math. N.S. 22 (2016),  191--225.


\bibitem{bv2} A. Berenstein, E. Vassiliev, \emph{Fields with a dense-codense linearly independent multiplicative subgroup}, Preprint (2017).

\bibitem{bh} G. Boxall, P. Hieronymi, {\em Expansions which introduce no new open sets}, Journal of Symbolic Logic, (1) 77 (2012) 111--121.


\bibitem{dms1} A. Dolich, C. Miller, C. Steinhorn, {\em Structures having o-minimal open core}, Trans. AMS 362 (2010), 1371--1411.


\bibitem{dms2} A. Dolich, C. Miller, C. Steinhorn, {\em Expansions of o-minimal structures by dense independent sets}, Annals of Pure and Applied Logic 167 (2016), 684--706


\bibitem{vdd-tarski} L. van den Dries, {\em Remarks on Tarski's problem concerning $(R, +, -, exp)$}, Logic Colloquium (1982), 97-121, (G. Lolli, G. Longo and A. Marcja, eds.), North-Holland, 1984.

\bibitem{vdd-dim} L. van den Dries, {\em Dimension of definable sets, algebraic boundedness and henselian fields}, Annals of Pure and Applied Logic 45 (1989), 189--209.


\bibitem{vdd-weil} L. van den Dries, {\em Weil's group chunk theorem: A topological setting}, Illinois J. Math.
 34 (1990), 127--139.




\bibitem{vdd-dense} L. van den Dries, {\em Dense pairs of o-minimal structures}, Fundamenta Mathematicae 157 (1988), 61--78.



\bibitem{dg} L. van den Dries, A. G\"unayd\i n, {\em The fields of real and complex numbers with a small multiplicative group}, Proc. London Math. Soc. 93 (2006), 43--81.

\bibitem{el-bsl}  P. Eleftheriou, \emph{Non-standard lattices and o-minimal groups}, Bulletin of Symbolic Logic 19 (2013), 56--76.



\bibitem{el-ei} P. Eleftheriou, {\em Small sets in dense pairs}, Preprint (2017).

\bibitem{el-pw} P. Eleftheriou, {\em Counting algebraic points in expansions of o-minimal structures by a dense set}, Preprint (2018).

\bibitem{egh} P. Eleftheriou, A. G\"{u}naydin, and P. Hieronymi, {\em Structure theorems in tame expansions of o-minimal structures by dense sets}, Preprint, upgraded version (2017).


\bibitem{egh2} P. Eleftheriou, A. G\"{u}naydin, and P. Hieronymi, {\em The Choice Property in tame expansions of o-minimal structures}, Preprint (2017).


\bibitem{ep-sel2} P. Eleftheriou and Y. Peterzil, {\em Definable groups as homomorphic images of semilinear and field-definable groups}, Selecta Math. N.S. 18 (2012), 905-940.



\bibitem{epr}    P. Eleftheriou, Y. Peterzil, J. Ramakrishnan, {\em Interpretable groups are definable}, J. Math. Log. 14 (2014) 1450002.

\bibitem{for} A. Fornasiero, {\em Groups and rings definable in d-minimal structures}, Preprint, arXiv: 1205.4177 (2012).


\bibitem{gh} A. G\"unayd\i n, P. Hieronymi, {\em The real field with the rational points of an elliptic curve}, Fund. Math. 215 (2011) 167-175.

\bibitem{hn} P. Hieronymi, T. Nell, {\em Distal and non-distal pairs}, J. Symb. Log., 82 (2017), 375-383.

\bibitem{hr} E. Hrushovski, {\em The Mordell-Lang conjecture for function fields}. J. Amer. Math. Soc. 9 (1996), 667--690.

\bibitem{hpp} E. Hrushovski, Y. Peterzil, and A. Pillay, {\em Groups, measures, and the NIP}, J. Amer. Math. Soc. 21  (2008), 563--596.
\bibitem{johns} J. Johns, \emph{An Open Mapping Theorem for O-Minimal Structures}, Journal of Symbolic Logic 66 (2001), 1817--1820.


\bibitem{kps} J. Knight, A. Pillay and C. Steinhorn, {\em Definable sets in ordered structures II}, Trans. Amer. Math. Soc. 295 (1986), 593-605.

\bibitem{ms} C. Miller, P. Speissegger, {\em Expansions of the real line by open sets: o-minimality and open cores}, Fund. Math. 162 (1999), 193--208.




\bibitem{mst} C. Miller, S. Starchenko, {\em A growth dichotomy for o-minimal expansions of ordered groups},
Trans. AMS  350 (9), 3505--3521.

\bibitem{nell} T. Nell, {\em Distal and non-Distal Behavior in Pairs}, Preprint (2018).


\bibitem{pet-sbd} Y. Peterzil, {\em Returning to semi-bounded sets}, J. Symbolic Logic  74  (2009), 597-617.

\bibitem{pila} J. Pila, {\em O-minimality and the Andr\'e-Oort conjecture for $\bb C^n$}, Ann. Math. 173 (2011), 1779-1840.

\bibitem{pw} J. Pila and A. J. Wilkie, {\em The rational points of a definable set}, Duke Math. J. 133 (2006), 591--616.

\bibitem{ps} A. Pillay and C. Steinhorn, {\em Definable sets in ordered structures I}, Trans. Amer. Math. Soc. 295 (1986), 565-592.


\bibitem{rob} A. Robinson, {\em Solution of a problem of Tarski}, Fund. Math. 47 (1959), 79--204.


\bibitem{simon-book} P. Simon, {\sc A guide to NIP theories}, Lecture Notes in Logic, Cambridge University Press, 1st edition, 2015.

\bibitem{wen} R. Wencel, {\em Groups, group actions and fields definable in first-order topological
structures}, Math. Log. Quart. 58 (2012), 449 -- 467.
\end{thebibliography}
\end{document}